\newtheorem{theorem}{Theorem}[section]
\newtheorem{lemma}[theorem]{Lemma}
\newtheorem{corollary}[theorem]{Corollary}
\theoremstyle{definition}
\newtheorem{definition}[theorem]{Definition}
\newtheorem{proposition}[theorem]{Proposition}
\theoremstyle{remark}
\newtheorem{remark}[theorem]{Remark}
\numberwithin{equation}{section}
\author{Bartosz Naskr\k{e}cki}
\title[Mordell-Weil ranks of families of elliptic curves]{Mordell-Weil ranks of families of elliptic curves associated to Pythagorean triples}
\begin{document}
\begin{abstract}
We study the family of elliptic curves $y^2=x(x-a^2)(x-b^2)$ parametrized by Pythago\-rean triples $(a,b,c)$. We prove that for a generic triple the lower bound of the rank of the Mordell-Weil group over $\mathbb{Q}$ is $1$, and for some explicitly given infinite family the rank is $2$. To each family we attach an elliptic surface fibered over the projective line. We show that the lower bounds for the rank are optimal, in the sense that for each generic fiber of such an elliptic surface its corresponding Mordell-Weil group over the function field $\mathbb{Q}(t)$ has rank $1$ or $2$, respectively. In order to prove this, we compute the characteristic polynomials of the Frobenius automorphisms acting on the second $\ell$-adic cohomology groups attached to elliptic surfaces of Kodaira dimensions $0$ and $1$.
\end{abstract}

\maketitle

\section{Introduction}
\medskip\noindent
Consider a triple of integers $a,b$ and $c$ that satisfy the Pythagorean equation
\[a^2+b^2=c^2.\]
We intend to study a family of elliptic curves $E_{(a,b,c)}$
\begin{equation}\label{eq:pythagorean_family}
y^2=x(x-a^2)(x-b^2)
\end{equation}
parametrized by such triples.
The family (\ref{eq:pythagorean_family}) is similar to another family of curves
\begin{equation}\label{eq:Frey_family}
y^2=x(x-a^2)(x+b^2)
\end{equation}
with $a^2+b^2=c^2$ which is a special case of the well-known Frey family. For low conductors there are many curves of high Mordell-Weil rank (up to rank $6$) in the family (\ref{eq:pythagorean_family}). This is, however, usually not the case for the family (\ref{eq:Frey_family}), since generically it is of rank $0$.

The family (\ref{eq:pythagorean_family}) is equivalent to the family of curves in the Legendre form
\[y^2=x(x-1)(x-\lambda)\]
with the parameter $\lambda$ limited to rational numbers of the form
\[\lambda=\left(\frac{2t}{t^2-1}\right)^2,\]
for $t$ rational, not equal to $0$ or $\pm 1$. The Mordell-Weil rank of the family (\ref{eq:pythagorean_family}) was considered for the first time in the paper \cite{INK_Pythagorean}, where it was proven  that the group $E_{(a,b,c)}(\mathbb{Q})$ of rational points contains a point $(c^2,abc)$ of infinite order.

\medskip\noindent
In order to state our results, we need some extra notation. Consider the set
\[\mathcal{T}=\{(a,b,c)\in\mathbb{Z}^{3}:a^2+b^2=c^2,\quad ab\neq 0\}\]
of triples of integers that satisfy the Pythagorean equation and define a smooth curve in the family (\ref{eq:pythagorean_family}). For any triple $(a,b,c)\in\mathcal{T}$, the rank of the Mordell-Weil group of rational points on $y^2=x(x-a^2)(x-b^2)$ is at least one by \cite[Lemma 6.8]{INK_Pythagorean}. We define an infinite subset $\mathcal{S}$ of $\mathcal{T}$. A triple $(a,b,c)$ belongs to $\mathcal{S}$ if and only if its coordinates can be written in the form
\begin{align*}
a&=P^2-Q^2,\\
b&=2PQ,\\
c&=P^2+Q^2,
\end{align*}
where the fraction $\frac{P}{Q}=\frac{2pq}{p^2+5q^2}$ for some $p,q\in\mathbb{Z}$.

\medskip\noindent
Our first main result is the following statement.
\begin{theorem}\label{Ranks_one}
For infinitely many $(a,b,c)\in\mathcal{S}$ the curve
\[y^2=x(x-a^2)(x-b^2)\]
has the Mordell-Weil group of rank at least two. There are two linearly independent points
\[Q_{1}=\left(\frac{1}{2} (a+b-c)^2,\frac{1}{2} (a+b) (a+b-c)^2\right),\]
\[Q_{2}=\left(\frac{1}{2} a (a-c),\frac{1}{2} a b \frac{1}{k^2}\left(p^4-25 q^4\right)\right),\]
where $k=GCD(2pq,p^2+5q^2)$ and $p$ and $q$ are as above.
\end{theorem}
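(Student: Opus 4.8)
The plan is to separate the statement into two essentially independent tasks: first verifying that $Q_1$ and $Q_2$ actually lie on the curve, and then proving that they are of infinite order and linearly independent for infinitely many members of $\mathcal{S}$.

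For the first task I would substitute the coordinates directly into the Weierstrass equation. For $Q_1$, writing $u=a+b-c$ and replacing $c^2$ by $a^2+b^2$ collapses the identity $y_1^2=x_1(x_1-a^2)(x_1-b^2)$ to a polynomial relation that holds for \emph{every} triple in $\mathcal{T}$; this recovers the known rank-one bound and shows that $Q_1$ is not special to $\mathcal{S}$. The point $Q_2$ is genuinely tied to the subfamily: its $y$-coordinate carries the factor $\tfrac{1}{k^2}(p^4-25q^4)=\tfrac{1}{k^2}(p^2-5q^2)(p^2+5q^2)$, and I would substitute $a=P^2-Q^2$, $b=2PQ$, $c=P^2+Q^2$ together with $\frac{P}{Q}=\frac{2pq}{p^2+5q^2}$ and clear denominators using $k=\gcd(2pq,p^2+5q^2)$ to confirm $y_2^2=x_2(x_2-a^2)(x_2-b^2)$. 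The whole purpose of the normalization $p^2+5q^2$ in the definition of $\mathcal{S}$ is precisely to force this quantity to be a square, so this is where that choice earns its keep.

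For the second task I would pass to the associated elliptic surface. Setting $t=p/q$ rewrites the normalization as $\frac{P}{Q}=\frac{2t}{t^2+5}$ and turns $(a,b,c)$ into a one-parameter family, so the curve becomes an elliptic surface $\mathcal{E}\to\mathbb{P}^1$ with generic fiber $E/\mathbb{Q}(t)$, and $Q_1,Q_2$ become sections $\sigma_1,\sigma_2\in E(\mathbb{Q}(t))$. It then suffices to show that $\sigma_1,\sigma_2$ are independent over $\mathbb{Q}(t)$. The cleanest route is to exhibit a single rational value $t_0$ at which the specialized points $P_1,P_2$ are independent in $E_{t_0}(\mathbb{Q})$ --- for instance by computing the $2\times 2$ N\'eron--Tate height-pairing matrix on that one curve and checking that its determinant is nonzero, or by a direct $2$-descent. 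Since any $\mathbb{Q}(t)$-linear relation among $\sigma_1,\sigma_2$ specializes to the same relation among $P_1,P_2$, independence at one good $t_0$ forces independence of the sections, and in particular shows that neither is torsion.

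Finally I would invoke Silverman's specialization theorem: the specialization homomorphism $E(\mathbb{Q}(t))\to E_{t_0}(\mathbb{Q})$ is injective for all but finitely many $t_0\in\mathbb{Q}$. Hence for all but finitely many $t=p/q$ the images of $\sigma_1,\sigma_2$ stay independent, so the corresponding curve has Mordell-Weil rank at least two, and since $\mathcal{S}$ is parametrized by infinitely many such $t$ we obtain infinitely many triples with the asserted property. I expect the independence step to be the main obstacle: the algebraic verifications are mechanical, but ruling out a hidden $\mathbb{Q}(t)$-relation requires either a trustworthy height computation on a concrete fiber or enough control over the singular fibers of $\mathcal{E}$ to evaluate the function-field height pairing directly, and one must choose the auxiliary $t_0$ so as to avoid the finitely many degenerate specializations.
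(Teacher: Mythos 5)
Your outline is correct and follows the same skeleton as the paper: realize $Q_1,Q_2$ as the images of two sections of the elliptic surface attached to $y^2=x\bigl(x-(u^2-1)^2\bigr)\bigl(x-4u^2\bigr)$ with $u=\frac{2t}{5+t^2}$ under the coordinate change $(x,y)\mapsto\bigl(x\frac{(a-c)^2}{4},y\frac{(c-a)^3}{8}\bigr)$, prove the two sections are independent over $\mathbb{Q}(t)$, and then invoke Silverman's specialization theorem to get independence on all but finitely many fibers. The one place you genuinely diverge is the independence step. The paper gets it from its structural results on the surface $\mathcal{E}_3$: the points $P_2,P_3$ are shown to be part of a basis of the Mordell--Weil lattice, with height pairings $\langle P_2,P_2\rangle=2$, $\langle P_3,P_3\rangle=3$, $\langle P_2,P_3\rangle=0$ computed from the singular-fiber data of Table \ref{table:singular_E3} (this sits inside the much heavier computation, via Frobenius characteristic polynomials and the Artin--Tate conjecture, needed for Theorem \ref{Ranks_two}). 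Your alternative --- check independence of the two specialized points on a single well-chosen fiber $E_{t_0}/\mathbb{Q}$ by a height or descent computation, and observe that any $\mathbb{Z}$-linear relation among the sections would specialize --- is logically sound (that direction needs no injectivity) and is more elementary and self-contained for the purposes of this particular theorem, at the cost of an unverified numerical computation on one curve; the paper's route buys the exact generic rank and generators, which it needs anyway for Theorems \ref{theorem:rank_K3} and \ref{Ranks_two}. Two small points you should still nail down: Silverman's theorem requires the fibration to be non-isotrivial (the $j$-invariant here is non-constant, so this is fine but should be said), and you should note, as the paper does via Proposition \ref{proposition:bijection_classes}, that infinitely many admissible $t$ really do yield infinitely many triples in $\mathcal{S}$.
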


\begin{remark}
The set $\mathcal{S}$ splits into countably many nonempty subsets $\{\mathcal{C}_{i}\}_{i=1}^{\infty}$ 
\[\mathcal{S}=\cup_{i=1}^{\infty} \mathcal{C}_{i}\] 
that correspond to isomorphism classes of curves over $\mathbb{Q}$, cf. Proposition \ref{proposition:bijection_classes}.  For all but finitely many $i$ the rank of the Mordell-Weil group of $\mathbb{Q}$-rational points of the curve $y^2=x(x-a^2)(x-b^2)$, where $a^2+b^2=c^2$ and $(a,b,c)\in\mathcal{C}_{i}$ is at least two.
\end{remark}

\noindent
Concerning the generic rank of family (\ref{eq:pythagorean_family}) we have the following result.
\begin{theorem}\label{theorem:rank_K3}
The group of $\mathbb{Q}(t)$-rational points on the curve
\begin{equation}\label{eq:Legendre_family}
y^2=x(x-1) \left(x- \left( \frac{2t}{t^2-1}\right)^2 \right)
\end{equation}
is of rank one.
\end{theorem}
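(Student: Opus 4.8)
The plan is to pass to the relatively minimal smooth elliptic surface $\pi\colon S\to\mathbb{P}^1_t$ associated with \eqref{eq:Legendre_family}, determine its geometry, and then bound the Mordell--Weil rank of the generic fibre by combining the Shioda--Tate formula with a Frobenius computation on $\ell$-adic cohomology. First I would clear denominators: writing $\lambda=\bigl(\tfrac{2t}{t^2-1}\bigr)^2$ and substituting $x=X/(t^2-1)^2$, $y=Y/(t^2-1)^3$ turns \eqref{eq:Legendre_family} into the integral Weierstrass model
\[
Y^2=X\bigl(X-(t^2-1)^2\bigr)\bigl(X-4t^2\bigr),
\]
whose discriminant is $\Delta=256\,t^4(t^2-1)^4(t^4-6t^2+1)^2$, of degree $20$ in $t$. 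The map $t\mapsto\lambda$ has degree $4$ and is ramified with index $2$ exactly over $\lambda\in\{0,\infty,-1\}$; since $\lambda=-1$ is a smooth fibre of the Legendre surface, the ramification there produces no singular fibre. Running Tate's algorithm — equivalently, applying the base-change rules to the Legendre surface, which has fibres $I_2,I_2,I_2^{*}$ over $\lambda=0,1,\infty$ — I would identify the singular fibres of $S$ as type $I_4$ over $t\in\{0,\infty,\pm1\}$ and type $I_2$ over the four roots of $t^4-6t^2+1$. The total degree $\deg\Delta=24$ (the extra $4$ being the $I_4$ at $t=\infty$) gives Euler number $24$, so $\chi(\mathcal{O}_S)=2$ and $S$ is a K3 surface; in particular its geometric Picard number satisfies $\rho(S)\le 20$.

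Next I would extract the two easy bounds. The Shioda--Tate formula reads $\rho(S)=2+\sum_v(m_v-1)+\operatorname{rank}(MW)$; with four fibres of type $I_4$ (each $m_v=4$) and four of type $I_2$ (each $m_v=2$) one gets $\sum_v(m_v-1)=16$, whence the geometric Mordell--Weil rank is $\rho(S)-18\le 2$. For the lower bound I would transport the infinite-order point $(c^2,abc)$ of \cite[Lemma 6.8]{INK_Pythagorean} to the model \eqref{eq:Legendre_family}: it becomes the section whose $x$-coordinate is $\bigl(\tfrac{t^2+1}{t^2-1}\bigr)^2$, a non-constant rational function and hence of infinite order, so $\operatorname{rank}E(\mathbb{Q}(t))\ge 1$.

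It remains to rule out rank $2$ over $\mathbb{Q}(t)$, and this is the main obstacle. The strategy is the $\ell$-adic one advertised in the abstract: fix a prime $p$ of good reduction, count $\#S(\mathbb{F}_{p^n})$ for enough values of $n$, and use the Lefschetz trace formula for the K3 surface,
\[
\#S(\mathbb{F}_{p^n})=1+p^{2n}+\operatorname{Tr}\bigl(\mathrm{Frob}_p^{\,n}\mid H^2_{\text{\'et}}(S_{\overline{\mathbb{F}_p}},\mathbb{Q}_\ell)\bigr),
\]
together with the functional equation and the already-known algebraic eigenvalues coming from the fibral divisors and the zero section, to pin down the characteristic polynomial of $\mathrm{Frob}_p$ on $H^2$. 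By the Tate conjecture for K3 surfaces, the number of eigenvalues of the form $p\cdot(\text{root of unity})$ equals $\rho(S_{\overline{\mathbb{F}_p}})$, while the multiplicity of the eigenvalue $p$ itself controls the $\mathrm{Gal}(\overline{\mathbb{Q}}/\mathbb{Q})$-invariant part of the N\'eron--Severi group; combined with the specialization inequality $\rho(S_{\overline{\mathbb{Q}}})\le\rho(S_{\overline{\mathbb{F}_p}})$ this determines both $\rho(S)$ and its $\mathbb{Q}$-rational analogue.

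Reconciling this count with the $\mathbb{Q}$-rational form of Shioda--Tate is the delicate step, and I expect it to be the hard part: one must track the Galois action on the components of the $I_4$ and $I_2$ fibres and on the conjugate quadruple of $I_2$ fibres over the roots of $t^4-6t^2+1$, and show that any putative second independent section is defined only over a nontrivial extension — its class pairing with an eigenvalue $p\zeta$, $\zeta\ne1$ — so that the $\mathbb{Q}$-rational Mordell--Weil rank drops to $1$. This picture is consistent with Theorem \ref{Ranks_one}, where special values of $t$ make a second, geometrically defined section become $\mathbb{Q}$-rational and push the rank to $2$ on those fibres. The remaining technical care lies in choosing $p$ so that the reduction stays K3 with the same fibre configuration, counting points correctly over the bad fibres of the affine model, and carrying out the descent from the geometric to the $\mathbb{Q}(t)$-rank without losing track of the component groups.
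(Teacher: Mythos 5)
Your setup is sound and matches the paper's: the integral model $Y^2=X(X-(t^2-1)^2)(X-4t^2)$, the fibre configuration ($I_4$ at $t=0,\pm1,\infty$ and $I_2$ at the roots of $t^4-6t^2+1$), the Euler number $24$, the K3 bound $\rho\le 20$, and Shioda--Tate giving geometric Mordell--Weil rank at most $2$ are all exactly what the paper uses (Tables 2 and Lemma \ref{lemma:rank_e2}). Two smaller issues before the main one: your justification of the lower bound (``a non-constant rational function and hence of infinite order'') is not a valid principle --- the torsion sections $T_1=(-4t^2,0)$ and $T_2$ have non-constant coordinates; one must instead use the injection of torsion into the product of component groups (Lemma \ref{lemma:torsion_types}), which kills everything of order not dividing $4$, and check $2P,4P\neq O$, or simply cite \cite[Lemma 6.8]{INK_Pythagorean}.

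The genuine gap is the step you yourself flag as ``the hard part'': ruling out rank $2$ over $\mathbb{Q}(t)$. You propose to do this by pinning down the characteristic polynomial of Frobenius on $H^2$ of a reduction and reading off the Galois-invariant part of $NS$ from the multiplicity of the eigenvalue $p$, then reconciling with a $\mathbb{Q}$-rational Shioda--Tate. This is not carried out, and as sketched it is substantially harder than necessary: the geometric Picard number here is already the maximal $20$, so the reduction will have $\rho=20$ or $22$ and the eigenvalue count alone cannot isolate the $\mathbb{Q}(t)$-rank without a careful bookkeeping of how Frobenius permutes the components of the conjugate $I_2$ fibres and which eigenvalue ($p$ versus $-p$ or $p\zeta$) is carried by the class of the putative second section --- none of which you compute. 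The paper needs no $\ell$-adic input at all for this theorem (that machinery is reserved for $\mathcal{E}_3$ and the twists $\mathcal{E}_1'$, $\mathcal{E}_1''$). Instead, Lemma \ref{lemma:rank_K3} exhibits two explicit sections $P_1$ (defined over $\mathbb{Q}(\sqrt{2},\sqrt{-1})(t)$) and $P_2$ (defined over $\mathbb{Q}(t)$) with height pairing $\mathrm{diag}(\tfrac12,1)$, shows by a discriminant bound plus an explicit $2$-descent that they generate the free part of $E(\overline{\mathbb{Q}}(t))$, and then kills rank $2$ over $\mathbb{Q}(t)$ by a one-line Galois argument: a $\sigma$ with $\sigma(\sqrt{-1})=-\sqrt{-1}$, $\sigma(\sqrt{2})=\sqrt{2}$ sends $P_1$ to $-P_1$ and fixes $P_2$, so any $\mathbb{Q}(t)$-rational point $R=aP_1+bP_2+T$ forces $8aP_1=O$, hence $a=0$. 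To complete your proof you would need either to carry out this kind of explicit generator-plus-Galois-descent argument, or to fully execute the $\ell$-adic computation you outline, including the point counts and the component-group bookkeeping; as written, the conclusion that the rank is exactly $1$ is not established.
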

\medskip\noindent
We prove Theorem \ref{theorem:rank_K3} as an application of the Shioda-Tate formula. In fact, a stronger result holds. The rank of the group of $\overline{\mathbb{Q}}(t)$-rational points of the curve from Theorem \ref{Ranks_one} is equal to $2$ but only a subgroup of rank one is defined over $\mathbb{Q}(t)$. Similar investigation of the generic rank of the family (\ref{eq:Frey_family}) shows that the rank of the Mordell-Weil group of the corresponding model over $\overline{\mathbb{Q}}(t)$ is equal to $0$.

\medskip\noindent
The result in Theorem \ref{Ranks_one} displays the generic rank but the corresponding geometric result is more involved.
\begin{theorem}\label{Ranks_two}
Let 
\[E:\quad y^2=x\left(x-\left(\left(\frac{2t}{t^2+5}\right)^2-1\right)^2\right)\left(x-4\left(\frac{2t}{t^2+5}\right)^2\right),\]
be the elliptic curve over $\overline{\mathbb{Q}}(t)$ which is obtained from the curve (\ref{eq:Legendre_family}) by a suitable change of parameter $t$ and a linear change of coordinates (cf. formula (\ref{eq:formula_proof})). The geometric Mordell-Weil group $E(\overline{\mathbb{Q}}(t))$ is isomorphic to $\mathbb{Z}^{3}\oplus\mathbb{Z}/2\mathbb{Z}\oplus\mathbb{Z}/4\mathbb{Z}$.
We put $u=\frac{2t}{t^2+5}$. The free part of the group $E(\overline{\mathbb{Q}}(t))$ is generated by the points
\begin{align*}
P_{1}&=(2 (1 + \sqrt{2}) (-1 + u)^2 u,2\sqrt{-1} (1 + \sqrt{2}) (-1 + (\sqrt{2} - u)^2) (-1 + u)^2 u),\\
P_{2}&=(2(u - 1)^2,2(-1 + u)^2 (-1 + 2u + u^2)),\\ 
P_{3}&=\left(1 - u^2, \frac{\left(-5 + t^2\right) u \left(-1 + u^2\right)}{5 + t^2}\right).
\end{align*}
The torsion subgroup of $E(\overline{\mathbb{Q}}(t))$ is generated by points
\begin{align*}
T_{1}&=(-4u^2,0)\\
T_{2}&=(2(-u + u^3),2\sqrt{-1}(u^2-1)u(-1 - 2u + u^2)).
\end{align*}
Moreover the group of $\mathbb{Q}(t)$-rational points on $E$ is generated by the points $P_{2}, P_{3}$ and $T_{1}$ and $2T_{2}$.
\end{theorem}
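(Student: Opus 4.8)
The plan is to realize $E$ as the generic fibre of an elliptic surface $\pi\colon\mathcal{E}\to\mathbb{P}^1$ over $\overline{\mathbb{Q}}$, namely its relatively minimal Kodaira--N\'eron model, and then to determine the geometric Mordell--Weil group by squeezing its rank between an explicit lower bound and a cohomological upper bound, in the spirit of the proof of Theorem~\ref{theorem:rank_K3}. First I would clear denominators in $u=\tfrac{2t}{t^2+5}$ to obtain an integral Weierstrass model over $\mathbb{Q}(t)$, compute the discriminant $\Delta=16\,A^2B^2(A-B)^2$ with $A=(u^2-1)^2$, $B=4u^2$ and $A-B=u^4-6u^2+1$, and read off the reduction types of the singular fibres from $\mathrm{ord}_v(\Delta)$ and $\mathrm{ord}_v(j)$ via Tate's algorithm. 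This fixes the contribution $\sum_v(m_v-1)$ of the reducible fibres to the trivial lattice and, together with $e(\mathcal{E})=\sum_v e(\mathcal{E}_v)$ and Noether's formula, identifies the surface (expected to be properly elliptic, of Kodaira dimension~$1$, matching the abstract) and its second Betti number $b_2$.

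For the lower bound I would check directly that $P_1,P_2,P_3$ satisfy the equation of $E$ and that $T_1,T_2$ are torsion with $2T_2$ a nontrivial $2$-torsion point, and then prove that $P_1,P_2,P_3$ are $\mathbb{Z}$-independent by evaluating the canonical height pairing. Using Shioda's explicit formula expressing the height as the intersection-with-the-zero-section term corrected by the local contributions at the reducible fibres, I would assemble the $3\times3$ Gram matrix and verify that its determinant, the regulator, is nonzero. This yields $\operatorname{rank}E(\overline{\mathbb{Q}}(t))\ge 3$.

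The heart of the argument, and the main obstacle, is the matching upper bound $\operatorname{rank}\le 3$. By the Shioda--Tate formula $\rho(\mathcal{E})=2+\sum_v(m_v-1)+\operatorname{rank}E(\overline{\mathbb{Q}}(t))$, so it suffices to bound the geometric Picard number from above. I would reduce $\mathcal{E}$ modulo a prime $p$ of good reduction and compute $\#\mathcal{E}(\mathbb{F}_{p^n})$ for $n$ up to roughly $\tfrac{1}{2}b_2$; since these elliptic surfaces are simply connected ($b_1=b_3=0$), the Grothendieck--Lefschetz trace formula reduces each count to $\operatorname{Tr}(\mathrm{Frob}^n\mid H^2_{\mathrm{et}}(\mathcal{E}_{\overline{\mathbb{F}_p}},\mathbb{Q}_\ell))$, which, together with Poincar\'e duality, pins down the characteristic polynomial of Frobenius on $H^2$. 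The geometric Picard number of the reduction is bounded above by the number of eigenvalues of the form $p\zeta$ with $\zeta$ a root of unity (the cycle-class bound), and $\rho(\mathcal{E})\le\rho(\mathcal{E}_{\overline{\mathbb{F}_p}})$ since specialization can only enlarge the N\'eron--Severi group. The delicate point is precisely this possible enlargement: a single prime may overshoot, so I expect to have to compute the Frobenius polynomial at two well-chosen primes, isolate the transcendental factor, and verify that its eigenvalues are genuinely not $p$ times roots of unity, thereby certifying that the count equals $2+\sum_v(m_v-1)+3$. Combined with the lower bound this forces $\operatorname{rank}=3$, giving the free part $\mathbb{Z}^3$.

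Finally, for the torsion and the descent to $\mathbb{Q}(t)$: the full $2$-torsion $(\mathbb{Z}/2\mathbb{Z})^2$ is rational because the three roots $0,(u^2-1)^2,4u^2$ lie in $\mathbb{Q}(t)$, and $T_2$ supplies a $4$-torsion point, so $\mathbb{Z}/2\mathbb{Z}\oplus\mathbb{Z}/4\mathbb{Z}$ is realized. To show the torsion is no larger, I would specialize at a fibre of good reduction and use the injectivity of the specialization map on the torsion subgroup, bounding $E(\overline{\mathbb{Q}}(t))_{\mathrm{tors}}$ by the torsion of one explicit smooth fibre. For the $\mathbb{Q}(t)$-rational subgroup I would use $E(\mathbb{Q}(t))=E(\overline{\mathbb{Q}}(t))^{\mathrm{Gal}(\overline{\mathbb{Q}}/\mathbb{Q})}$: the coordinates of $P_2,P_3,T_1$ and of $2T_2$ lie in $\mathbb{Q}(t)$ and are therefore Galois-fixed, whereas $P_1$ and $T_2$ involve $\sqrt{2}$ and $\sqrt{-1}$ and are moved, up to sign and torsion, by $\mathrm{Gal}(\mathbb{Q}(\sqrt{2},\sqrt{-1})/\mathbb{Q})$. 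Computing the Galois action on the lattice generated by $P_1,P_2,P_3$ together with the torsion and then taking invariants shows that the fixed subgroup is exactly $\langle P_2,P_3,T_1,2T_2\rangle$, which completes the proof.
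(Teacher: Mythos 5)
Your overall architecture --- trivial lattice plus explicit independent sections for the lower bound, point counting and the cycle-class bound for the upper bound --- is the right general philosophy, but the step you yourself call the heart of the argument fails for this surface as proposed. Here $e(\mathcal{E}_3)=48$, so $b_2=46$, the trivial lattice has rank $34$, and you need $\rho\le 37$. A single prime cannot deliver this: at $p=17$ the characteristic polynomial of Frobenius on $H^2$ factors as $(x+17)^{8}(x-17)^{30}$ times a degree-$8$ polynomial, and the factor $(x+17)^{8}$ already contributes eight eigenvalues of the form $17\zeta$, so the cycle-class bound at one prime is at best $38$. You are therefore forced into the van Luijk--Kloosterman refinement: compute at two suitable primes and compare discriminants of the N\'{e}ron--Severi lattices modulo squares via Artin--Tate. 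But pinning down the degree-$8$ transcendental factor requires point counts over $\mathbb{F}_{p^{4}}$ (or $\mathbb{F}_{p^{8}}$), and the paper records that the only usable primes up to $140$ are $17$, $73$ and $97$, with the counts over $\mathbb{F}_{73^{4}}$ and $\mathbb{F}_{97^{4}}$ beyond computational reach. The paper's actual proof routes around this: by Proposition~\ref{theorem:ranks} and Corollary~\ref{lemma:total_rank} one has $\operatorname{rank}E_{3}(\overline{\mathbb{Q}}(t))=\operatorname{rank}E_{2}+\operatorname{rank}E_{2}'$ (via the degree-$2$ base change $t\mapsto u$), and $E_{2}'$ decomposes further into the twists $E_{1}'$ and $E_{1}''$, whose Kodaira--N\'{e}ron models are K3; the Frobenius computations are then done over $\mathbb{F}_{17}$, $\mathbb{F}_{11^{2}}$ and $\mathbb{F}_{17^{2}}$, where they are feasible and where the Artin--Tate discriminant comparison (needed again for $E_{1}''$, for the same parity reason) is accessible. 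The rank of $E_2$ itself needs no point counting at all: the K3 bound $\rho\le 20$ plus Shioda--Tate already forces rank $2$.

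Two further gaps. First, a nonzero regulator for $P_1,P_2,P_3$ gives $\operatorname{rank}\ge 3$ and, with the upper bound, rank exactly $3$, but it does not show that these points generate the free part; you must also rule out a proper finite-index sublattice, which the paper does by combining the discriminant $\Delta(\Lambda')=6\cdot 4^{3}$ with an explicit $2$-descent computation (Lemma~\ref{lemma:mordell_weil_generators}). Second, your torsion bound does not close over $\overline{\mathbb{Q}}$: the specialization map injects $E(\overline{\mathbb{Q}}(t))_{\mathrm{tors}}$ into $E_{t_0}(\overline{\mathbb{Q}})$, whose torsion is $(\mathbb{Q}/\mathbb{Z})^{2}$, so this yields no finite bound. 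The correct tool is the injection into the product of the component groups of the singular fibres (Lemma~\ref{lemma:torsion_types}), which caps the torsion by a product of copies of $\mathbb{Z}/2\mathbb{Z}$ and $\mathbb{Z}/4\mathbb{Z}$, followed by the explicit check that $T_1$ and $T_1+(0,0)$ are not in $2E_3(K)$. Your treatment of the $\mathbb{Q}(t)$-rational subgroup by Galois invariance does match the paper's Corollary~\ref{corollary:generators_over_Qt}.
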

The proof of Theorem \ref{Ranks_two} requires more involved methods. Note that the geometric approach of Shioda, cf. \cite{Shioda_Mordell_Weil} implies only that the upper bound of the rank equals $6$. We base the proof of Theorem \ref{Ranks_two} on the approach of van Luijk in \cite{Luijk_Heron} and Kloosterman in \cite{Kloosterman_rank_15}. 

\medskip\noindent
To the best of our knowledge, the method of van Luijk and Kloosterman was used in the past exclusively for rational or K3 surfaces, cf. \cite{Schutt_Picard}, \cite{Luijk_Matrices}, \cite{Top_Zeeuw} and \cite{Elsenhans_Jahnel}. If an elliptic surface is of high geometric genus, then the method described below becomes very ineffective and it is computationally difficult to determine the zeta function of the surface. In our case, we perform calculations on elliptic sufaces which are rational or K3. In particular, we attach to the elliptic curve $E$ over $\overline{\mathbb{Q}}(t)$ an elliptic surface over $\mathbb{P}^{1}$, cf. \cite{Hulek}. We find its integral model $S$ as a scheme over $A$, where $A$ is a discrete valuation ring of a number field with a residue field isomorphic to $\mathbb{F}_{q}$. If the scheme $S\rightarrow A$ is smooth of relative dimension $2$ we obtain an elliptic surface $\tilde{S}=S_{\overline{\mathbb{F}_{q}}}$ over the field $\overline{\mathbb{F}_{q}}$. The action of the Frobenius automorphism on the second $\ell$-adic cohomology group $H^{2}_{et}(\tilde{S},\mathbb{Q}_{\ell})$, where $\ell\neq q$, gives rise to the characteristic polynomial of the automorphism. The computation of the characteristic polynomial involves point counting of $\mathbb{F}_{q^{r}}$-rational points on the surface $S_{\overline{\mathbb{F}_{q}}}$ up to some $r$. The Lefschetz fixed point formula allows us to compute the traces and the characteristic polynomials of the Frobenii. We apply \cite[Proposition 6.2]{Luijk_Heron} to estimate the number of eigenvalues of the form $p \zeta$, for some root of unity $\zeta$, which gives a sharp upper bound on the rank of the N\'{e}ron-Severi group $NS(S_{\overline{\mathbb{F}_{q}}})$. To conclude the computations, we apply the Shioda-Tate formula to obtain the rank of the group $E(\overline{\mathbb{Q}}(t))$. The rank of $E(\mathbb{Q}(t))$ is equal to $\mathop{rank}E(\overline{\mathbb{Q}}(t))-1$, because only one generator of the free part of $E(\overline{\mathbb{Q}}(t))$ is not defined over $\mathbb{Q}(t)$.  

\section{Notation and preliminaries}\label{section:proofs}
Let $S$ be the set of Pythagorean triples
\begin{equation}
S=\{(a,b,c)\in\mathbb{Z}^3:a^2+b^2=c^2\}.
\end{equation}

For each $s=(a,b,c)\in S$ we consider a curve over $\mathbb{Q}$
\begin{equation}
E_{s}: y^2=x(x-a^2)(x-b^2).
\end{equation}
When $ab\neq 0$ the equation defines a non-singular curve of genus one, hence an elliptic curve. The discriminant of the equation $E_{s}$ and its j-invariant are:
\begin{equation}
\Delta(s)=\Delta(a,b,c)=(16) (a - b)^2 (a + b)^2 b^4 a^4.
\end{equation}
\begin{equation}
j(s)=j(a,b,c)=256\cdot\frac{(a^4 - a^2 b^2 + b^4)^3}{b^4 a^4 (a - b)^2 (a + b)^2}.
\end{equation}

Observe that $\{s\in S: \Delta(s)\neq 0\}=\mathcal{T}$.

Let us now introduce the notion of equivalence of two tuples $s_{1},s_{2}\in \mathcal{T}$. We call two such tuples equivalent if two smooth curves $E_{s_{1}}$ and $E_{s_{2}}$ are equivalent via a linear change of coordinates defined over $\mathbb{Q}$, transforming one Weierstrass equation into another one, i.e. we assume that the curves are $\mathbb{Q}$-isomorphic.  We will write it as $s_{1}\sim s_{2}$. It is easy to check that $(a,b,c)\sim (A,B,C)$ if and only if there exists $u\in\mathbb{Q}^{\times}$ such that either $(a,b,c)=(\pm u A,\pm u B,\pm u C)$ or $(a,b,c)=(\pm u B,\pm u A,\pm u C)$. The relation $\sim$ is an equivalence relation. Hence, if $s_{1}$ and $s_{2}$ do not lie in the same equivalence class, the associated elliptic curves $E_{s_{1}}$ and $E_{s_{2}}$ are non-isomorphic over $\mathbb{Q}$. 

\medskip
\noindent
For any $s=(a,b,c)\in \mathcal{T}$ we introduce a new parameter $t=t(s)=\frac{b}{c-a}$. It is well-defined because a triple with $a=c$ cannot lie in $\mathcal{T}$. We have the following equalities:
\begin{equation}
\frac{t^2-1}{t^2+1}=\frac{a}{c},
\end{equation}
\begin{equation}
\frac{2t}{t^2+1}=\frac{b}{c}.
\end{equation}

Define an elliptic curve over $\mathbb{Q}(t)$:
\begin{equation}
E_{t}:y^2=x(x-(t^2-1)^2)(x-4t^2).
\end{equation}
A linear change of variables $x\mapsto x\frac{4}{(a-c)^2}, y\mapsto y\frac{8}{(c-a)^3}$ defines a $\mathbb{Q}$-isomorphism between elliptic curves $E_{(a,b,c)}$ and $E_{\frac{b}{c-a}}$ for any $(a,b,c)\in \mathcal{T}$.

The discriminant and j-invariant of the curve $E_{t}$ are
\[\Delta(t)=256 t^4 \left(-1+t^2\right)^4 \left(1-6 t^2+t^4\right)^2,\]
\[j(t)=\frac{16 \left(1-8 t^2+30 t^4-8 t^6+t^8\right)^3}{t^4 \left(-1+t^2\right)^4 \left(1-6 t^2+t^4\right)^2}.\]

The set $P=\{t\in\mathbb{Q}:\Delta(t)\neq 0\}=\mathbb{Q}\setminus\{0,\pm 1\}$ consists of all parameters for which $E_{t}$ is nonsingular. If $t\in P$, then
$E_{t}$ is $\mathbb{Q}$-isomorphic to a curve in Legendre form
\[y^2=x\left(x-1\right)\left(x-\left(\frac{2t}{t^2-1}\right)^2\right).\]
It is easy to check that two curves $E_{t}$ and $E_{t'}$ are $\mathbb{Q}$-isomorphic if and only if 
%%%%%poprawic wyglad ulamkow
\[t'\in\left\{t,-t,\frac{1}{t},-\frac{1}{t},\frac{1+t}{1-t},\frac{1-t}{1+t},-\frac{1-t}{1+t},-\frac{1+t}{1-t}\right\}.\]
Rational functions in variable $t$, defined as above, form a group with composition. It is the dihedral group on $8$ elements generated by two mappings: $f(t)=-t$ and $g(t)=\frac{1+t}{1-t}$. An equivalence relation can be defined on the set $P$ where $t,t'\in P$ are in relation $t\sim t'$ if and only if the curves $E_{t}$ and $E_{t'}$ are $\mathbb{Q}$-isomorphic. Hence each equivalence class contains exactly 8 different elements.

\medskip\noindent
Consequently, we obtain the following 
\begin{proposition}\label{proposition:bijection_classes}
There is a a bijection of sets of equivalence classes
\begin{equation}
\mathcal{T}/\sim \rightarrow P/\sim
\end{equation}
given by $(a,b,c)\mapsto \frac{b}{c-a}$ on representatives. The inverse is given by \[\frac{p}{q}\mapsto (p^2-q^2,2pq,p^2+q^2).\]
\end{proposition}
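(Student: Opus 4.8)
The plan is to recognize both quotient sets as sets of $\mathbb{Q}$-isomorphism classes of the same collection of elliptic curves, and to let the two descriptions of $\sim$ do all the work. By definition $s_1\sim s_2$ on $\mathcal{T}$ means precisely that $E_{s_1}$ and $E_{s_2}$ are $\mathbb{Q}$-isomorphic, and likewise $t\sim t'$ on $P$ means $E_t\cong_{\mathbb{Q}}E_{t'}$. The one geometric input I would use is the $\mathbb{Q}$-isomorphism $E_{(a,b,c)}\cong_{\mathbb{Q}}E_{t(a,b,c)}$ already exhibited in the excerpt via $x\mapsto\tfrac{4}{(a-c)^2}x$, $y\mapsto\tfrac{8}{(c-a)^3}y$, where $t(a,b,c)=\tfrac{b}{c-a}$. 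Before anything else I would check that $t$ actually maps $\mathcal{T}$ into $P$: the denominator $c-a$ is nonzero, since $a=c$ would force $b^2=c^2-a^2=0$ against $ab\neq0$; and from the identities $\tfrac{t^2-1}{t^2+1}=\tfrac ac$ and $\tfrac{2t}{t^2+1}=\tfrac bc$ one reads off $t\neq0$ (as $b\neq0$) and $t\neq\pm1$ (as $a\neq0$), so $t\in\mathbb{Q}\setminus\{0,\pm1\}=P$.

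With this in hand, well-definedness and injectivity of $\Phi\colon[(a,b,c)]\mapsto[t(a,b,c)]$ become a single two-way implication. If $(a,b,c)\sim(A,B,C)$ then $E_{(a,b,c)}\cong_{\mathbb{Q}}E_{(A,B,C)}$; composing with the two isomorphisms to the Legendre-type models and using transitivity gives $E_{t(a,b,c)}\cong_{\mathbb{Q}}E_{t(A,B,C)}$, i.e. $t(a,b,c)\sim t(A,B,C)$, so $\Phi$ descends to classes. Running the same chain of isomorphisms backwards yields the converse implication, which is exactly injectivity. In other words, $[(a,b,c)]\mapsto[E_{(a,b,c)}]$ and $[t]\mapsto[E_t]$ embed $\mathcal{T}/\!\sim$ and $P/\!\sim$ into the set of $\mathbb{Q}$-isomorphism classes of elliptic curves, and $\Phi$ is simply the comparison of these two embeddings.

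For surjectivity I would produce the explicit section $\psi\colon\tfrac pq\mapsto(p^2-q^2,\,2pq,\,p^2+q^2)$. The Pythagorean identity $(p^2-q^2)^2+(2pq)^2=(p^2+q^2)^2$ is immediate, and $\psi(t)\in\mathcal{T}$ because for $t=\tfrac pq\in P$ one has $q\neq0$, $p\neq0$ (as $t\neq0$) and $p\neq\pm q$ (as $t\neq\pm1$), hence $(p^2-q^2)(2pq)\neq0$. The key computation is $t(\psi(t))=\tfrac{2pq}{(p^2+q^2)-(p^2-q^2)}=\tfrac{2pq}{2q^2}=\tfrac pq=t$, which shows $\Phi\circ\psi=\mathrm{id}$ on representatives; this forces $\Phi$ to be onto and identifies $\psi$ as the claimed inverse. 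The relation $\psi\circ\Phi=\mathrm{id}$ then follows formally, since $t(\psi(t(a,b,c)))=t(a,b,c)$ gives $E_{\psi(t(a,b,c))}\cong_{\mathbb{Q}}E_{(a,b,c)}$, hence $\psi(t(a,b,c))\sim(a,b,c)$.

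The only genuinely fiddly point — what I would treat as the main obstacle, although it is mild — is the dependence of $\psi$ on the chosen integral representation of $t$, so I must check that $\psi$ is well defined on $P/\!\sim$. Replacing $(p,q)$ by $(\lambda p,\lambda q)$ for $\lambda\in\mathbb{Q}^{\times}$ scales the output triple by $\lambda^2\in\mathbb{Q}^{\times}$; the two triples remain in $\mathbb{Z}^3$ and differ by an allowed rational scaling, hence are $\sim$-equivalent. Moreover, if $t\sim t'$ then $E_{\psi(t)}\cong_{\mathbb{Q}}E_t\cong_{\mathbb{Q}}E_{t'}\cong_{\mathbb{Q}}E_{\psi(t')}$, so $\psi(t)\sim\psi(t')$. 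Once this representative-independence is settled, the two composites being identities shows that $\Phi$ is a bijection with the stated inverse, completing the proof.
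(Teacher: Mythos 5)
Your proof is correct and follows essentially the same route the paper intends: since both equivalence relations are defined as $\mathbb{Q}$-isomorphism of the associated elliptic curves and the paper's explicit change of variables gives $E_{(a,b,c)}\cong_{\mathbb{Q}}E_{b/(c-a)}$, the map on classes is automatically well defined and injective, and the explicit section $\frac{p}{q}\mapsto(p^2-q^2,2pq,p^2+q^2)$ supplies surjectivity. The paper leaves all of this implicit (the proposition is stated as a consequence of the preceding discussion), and your write-up simply makes those steps, including the representative-independence of the section, explicit.
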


\noindent
It follows from Proposition \ref{proposition:bijection_classes} that elements in $\mathcal{S}/\sim$ map bijectively to the elements in the set $\{u\in P: \exists_{t\in\mathbb{Q}}\quad u = \frac{2t}{5+t^2}\}/\sim$. The latter set is infinite, hence so is the former.

\section{Elliptic surfaces and Picard numbers}\label{section:elliptic_surf}
%In this section we recall basic facts concerning the elliptic surfaces. (dopisac)
We start this section by recalling all neccessary theorems and definitions related to elliptic surfaces. We compute Picard numbers of several elliptic surfaces and deduce the generic rank of the Mordell-Weil group of elliptic curves related to family (\ref{eq:pythagorean_family}).

\begin{definition}
Let $k$ be an algebraically closed field. Let $C$ be a smooth, irreducible, projective curve over $k$. An \textit{elliptic surface over} $C$ is a smooth, irreducible, projective surface $S$ over $k$ with a relatively minimal elliptic fibration $f:S\rightarrow C$ with a singular fiber and a zero section.
\end{definition}

For an elliptic curve $E$ over the function field $k(C)$ of the curve $C$ we can associate an elliptic surface $f:\mathcal{E}\rightarrow C$ with generic fiber $E$. It follows from the work of Kodaira and N\'{e}ron that $f$ always exists and is unique. Further we refer to this elliptic surface as Kodaira-N\'{e}ron model of an elliptic curve $E$ over $k(C)$.

\medskip\noindent
Below we define three different elliptic surfaces, starting from three distinguished elliptic curves over the function field of $\mathbb{P}^{1}$ over $\overline{\mathbb{Q}}$. Let $\mathcal{E}_{1}\rightarrow\mathbb{P}^{1}$ be an elliptic surface over $\mathbb{P}^{1}$ associated to 
\[y^2=x(x-(t-1)^2)(x-4t).\]
Let $\mathcal{E}_{2}\rightarrow\mathbb{P}^{1}$ be an elliptic surface over $\mathbb{P}^{1}$ associated to 
\[y^2=x(x-(t^2-1)^2)(x-4t^2).\]
Finally, let $\mathcal{E}_{3}\rightarrow\mathbb{P}^{1}$ be an elliptic surface over $\mathbb{P}^{1}$ associated to 
\[y^2=x(x-(u^2-1)^2)(x-4u^2),\quad u=\frac{2t}{5+t^2}.\]

\medskip\noindent
For any smooth, projective, geometrically integral variety $V$ over a field $K$ we denote by $NS(V_{\overline{K}})$ the N\'{e}ron-Severi group, i.e. the group of divisors on $V$ modulo algebraic equivalence.

\begin{theorem}[\cite{Shioda_Mordell_Weil}, Corollary 2.2]
Let $S\rightarrow C$ be an elliptic surface. The N\'{e}ron-Severi group $NS(S)$ is finitely generated and torsion-free.
\end{theorem}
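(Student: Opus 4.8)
The statement packages two rather different facts, and I would establish them separately: finite generation is a general property of the N\'eron--Severi group of any smooth projective variety, whereas torsion-freeness genuinely uses the elliptic fibration and its zero section.

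For finite generation the plan is to quote the theorem of the base. Fixing a prime $\ell$ different from the characteristic, the cycle class map gives an injection $NS(S)\otimes\mathbb{Q}_{\ell}\hookrightarrow H^{2}_{et}(S,\mathbb{Q}_{\ell}(1))$ into a finite-dimensional space, so $NS(S)$ has finite rank; combined with the torsion statement below this yields finite generation. (Over $\mathbb{C}$ one instead embeds $NS(S)\hookrightarrow H^{2}(S,\mathbb{Z})$ via the exponential sequence.) One can also read off finite generation from the Shioda--Tate sequence $0\to T\to NS(S)\to E(k(C))\to 0$, whose left term $T$ is the free lattice spanned by the zero section, a general fibre and the non-identity components of the reducible fibres, and whose right term is finitely generated by the Lang--N\'eron theorem.

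For torsion-freeness I would reduce to a statement about singular cohomology (resp.\ \'etale cohomology in positive characteristic). Over $\mathbb{C}$ the cycle class map embeds $NS(S)_{\mathrm{tors}}$ into $H^{2}(S,\mathbb{Z})_{\mathrm{tors}}$, and universal coefficients identify the latter with $H_{1}(S,\mathbb{Z})_{\mathrm{tors}}$, since $H^{2}(S,\mathbb{Z})_{\mathrm{tors}}\cong\mathrm{Ext}^{1}(H_{1}(S,\mathbb{Z}),\mathbb{Z})\cong H_{1}(S,\mathbb{Z})_{\mathrm{tors}}$. It therefore suffices to prove that $H_{1}(S,\mathbb{Z})$ is torsion-free. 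I would compute this group from the Leray spectral sequence of $f\colon S\to C$: the zero section $O\colon C\to S$ splits $f_{*}$, so the base contributes the torsion-free group $H_{1}(C,\mathbb{Z})\cong\mathbb{Z}^{2g(C)}$, and what remains is to understand the contribution of the fibres.

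The heart of the argument, and the step I expect to be the main obstacle, is to show that the fibre contribution to $H_{1}(S,\mathbb{Z})$ is torsion-free, which is precisely where the presence of a \emph{singular} fibre is used. The relevant term is governed by the coinvariants of $H_{1}(F,\mathbb{Z})\cong\mathbb{Z}^{2}$ of a smooth fibre under the monodromy representation $\pi_{1}(C\setminus\Sigma)\to SL_{2}(\mathbb{Z})$, together with the vanishing cycles of the degenerate fibres. A priori such a quotient of $\mathbb{Z}^{2}$ by a finite-index sublattice could acquire torsion; ruling this out requires Kodaira's classification of singular fibres to pin down the vanishing cycles of each fibre type, and an analysis showing that the global configuration of vanishing cycles spans either a primitive sublattice or all of $\mathbb{Z}^{2}$, leaving a torsion-free cokernel. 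Granting this, $H_{1}(S,\mathbb{Z})$ is torsion-free, hence so is $H^{2}(S,\mathbb{Z})$, and its subgroup $NS(S)$ is torsion-free as claimed. In the mixed-characteristic setting of this paper one runs the same scheme with the \'etale fundamental group and $\ell$-adic cohomology in place of their topological analogues, so the conclusion is independent of the ground field.
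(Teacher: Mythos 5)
The first thing to say is that the paper does not prove this statement at all: it is imported verbatim from Shioda's paper on Mordell--Weil lattices (Corollary 2.2 there), so there is no in-paper argument to compare yours against. Judged on its own merits, your outline follows the standard topological route over $\mathbb{C}$, and the reductions you make are correct: $NS(S)$ injects into $H^{2}(S,\mathbb{Z})$ via the exponential sequence, and $H^{2}(S,\mathbb{Z})_{\mathrm{tors}}\cong H_{1}(S,\mathbb{Z})_{\mathrm{tors}}$ by universal coefficients. But the step you yourself flag as ``the heart of the argument'' --- that the coinvariants of $H_{1}(F,\mathbb{Z})$ under monodromy, modulo the vanishing cycles, carry no torsion, equivalently that $\pi_{1}(S)\cong\pi_{1}(C)$ --- is exactly where the entire content of the theorem sits, and you only describe what would have to be checked rather than checking it. That this step cannot be waved through is shown by Enriques surfaces: they are elliptic fibrations over $\mathbb{P}^{1}$ with singular fibres but no section (only multiple fibres), and their N\'{e}ron--Severi group contains $2$-torsion. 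Any completion of your argument must therefore use the zero section in an essential way at precisely this point. Two smaller issues: a torsion-free abelian group of finite rank need not be finitely generated, so the $\ell$-adic rank bound plus torsion-freeness does not by itself give finite generation (you do need the theorem of the base, which you also invoke, so this is repairable); and your closing sentence about positive characteristic is too quick, since $\ell$-adic \'{e}tale cohomology only controls the prime-to-$p$ part of $NS(S)_{\mathrm{tors}}$, and ruling out $p$-torsion needs a separate input (flat or crystalline cohomology, or an algebraic argument).

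For contrast, Shioda's own proof avoids monodromy entirely and works over any algebraically closed field: the existence of a singular fibre is used to identify $\mathrm{Pic}^{0}(S)$ with the Jacobian of $C$ via $f^{*}$, and one then shows directly --- by intersecting with a fibre and with the zero section and using the decomposition of a divisor class into a trivial-lattice part and a point of the generic fibre --- that a divisor class that is numerically (or torsion in $NS$) trivial is already algebraically trivial. Your approach buys a clear topological picture of where the hypotheses enter, but as written it is a plan for a proof rather than a proof.
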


\begin{definition}
Let $S\rightarrow C$ be an elliptic surface. The \textit{Picard number} $\rho(S)$ of the surface $S$ is the rank of the N\'{e}ron-Severi group $NS(S)$.
\end{definition}

We recall the classical the Shioda-Tate formula.
\begin{theorem}[\cite{Shioda_Mordell_Weil}, Corollary 5.3]\label{theorem:Shioda_Tate}
Let $S\rightarrow C$ be an elliptic surface. Let $R\subset C$ be the set of points under singular fibers. For each $v\in R$ let $m_{v}$ denote the number of components of the singular fiber above $v$. Let $E$ denote the generic fiber of $S$ and $K$ be the function field of $C$. Let $\rho(S)$ denote the rank of N\'{e}ron-Severi group of $S$. We have the following identity
\[\rho(S)=2+\sum_{v\in R}(m_{v}-1)+\mathop{rank}(E(K)).\]
\end{theorem}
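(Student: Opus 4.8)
The plan is to realize the N\'eron-Severi group as an extension of the Mordell-Weil group by an explicit ``trivial'' sublattice coming from the zero section, a general fibre, and the non-identity fibre components, and then simply add up ranks. Concretely, I would define the \emph{trivial lattice} $\mathrm{Triv}(S)\subseteq NS(S)$ to be the subgroup generated by the class $O$ of the zero section, the class $F$ of a general fibre, and, for each $v\in R$, the classes of those irreducible components of the fibre over $v$ that do not meet $O$. The goal of the whole argument is the short exact sequence
\[0\longrightarrow \mathrm{Triv}(S)\longrightarrow NS(S)\xrightarrow{\ \phi\ } E(K)\longrightarrow 0,\]
after which the formula is immediate: by the preceding theorem $NS(S)$ is finitely generated, so $\rho(S)=\mathop{rank}\mathrm{Triv}(S)+\mathop{rank}E(K)$, and it remains only to verify that $\mathop{rank}\mathrm{Triv}(S)=2+\sum_{v\in R}(m_{v}-1)$.

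First I would compute the rank of the trivial lattice. Its vertical part, generated by $F$ and the non-identity components of the singular fibres, has rank $1+\sum_{v\in R}(m_{v}-1)$: every fibre is algebraically equivalent to $F$, which accounts for the single class $F$, while at each $v$ the component meeting $O$ is redundant, since its class equals $F$ minus the remaining ones, leaving exactly $m_{v}-1$ genuinely new classes. That these are independent follows from Kodaira's classification of singular fibres, which asserts that the intersection form restricted to the components of a fibre is negative semidefinite with one-dimensional radical spanned by the fibre class; hence the $m_{v}-1$ components off $O$ span a negative-definite, in particular nondegenerate, lattice. Adjoining $O$, which is not vertical because $O\cdot F=1$, raises the rank by one, giving $\mathop{rank}\mathrm{Triv}(S)=2+\sum_{v\in R}(m_{v}-1)$.

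Next I would construct $\phi$ and identify its kernel. Restriction to the generic fibre $E_{\eta}$ yields a homomorphism $NS(S)\to \mathrm{Pic}(E_{\eta})$ whose kernel is precisely the group of classes supported on fibres, i.e. the vertical lattice above. Composing with the canonical splitting $\mathrm{Pic}(E_{\eta})\cong\mathbb{Z}\oplus\mathrm{Pic}^{0}(E_{\eta})$ and the identification $\mathrm{Pic}^{0}(E_{\eta})\cong E(K)$, $P\mapsto[P]-[O]$, produces $\phi$; since $O$ restricts to a degree-one divisor it kills the $\mathbb{Z}$-factor, so $\ker\phi=\mathrm{Triv}(S)$. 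Surjectivity of $\phi$ is where the geometry of the Kodaira-N\'eron model enters: any $P\in E(K)$, being a $K$-rational point of $E_{\eta}$, extends by the N\'eron mapping property together with the minimality of $S$ to a section $\overline{P}\colon C\to S$ whose class maps to $P$ under $\phi$. This yields the exact sequence and finishes the proof.

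The main obstacle is the surjectivity step together with the exact identification of the kernel, both of which rest on the defining properties of the relatively minimal Kodaira-N\'eron model, namely that $K$-rational points extend uniquely to sections and that these sections generate $NS(S)$ modulo the vertical classes and $O$. The rank count for the vertical lattice is the other delicate input: it truly relies on the negative-definiteness supplied by Kodaira's table rather than on any formal manipulation. Everything else is bookkeeping with ranks inside the finitely generated, torsion-free group $NS(S)$ guaranteed by the preceding theorem.
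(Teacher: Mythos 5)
The paper offers no proof of this statement: it is quoted verbatim from Shioda's \emph{On the Mordell--Weil lattices} (Corollary 5.3 of the cited reference), where the argument is exactly the one you outline --- the exact sequence $0\rightarrow \mathrm{Triv}(S)\rightarrow NS(S)\rightarrow E(K)\rightarrow 0$ obtained by restricting to the generic fibre and using the N\'eron mapping property, combined with the rank count $\mathop{rank}\mathrm{Triv}(S)=2+\sum_{v}(m_{v}-1)$ via the negative-definiteness of the non-identity fibre components and $O\cdot F=1$. Your sketch is correct and coincides with the standard proof in the cited source, so there is nothing to add.
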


\begin{lemma}\label{lemma:torsion_types}
Let $E$ be an elliptic curve over $\overline{\mathbb{Q}}(t)$. Let $\Sigma\subset \mathbb{P}^{1}(\overline{\mathbb{Q}})$ be the set of points of bad reduction of E. Let $F_{v}$ denote the fibre at $v\in\Sigma$. We denote by $G(F_{v})$ the group generated by simple components of $F_{v}$. There exists an injective homomorphism
\[\phi:E\left(\overline{\mathbb{Q}}(t)\right)_{\mathop{tors}}\rightarrow \prod_{v\in \Sigma}G(F_{v}).\]
If $F_{v}$ is of multiplicative type $I_{n}$ in Kodaira notation (cf. \cite[Theorem IV.8.2]{Silverman_book}), the corresponding group is $\mathbb{Z}/n\mathbb{Z}$. If $F_{v}$ is of additive type $I_{2n}^{*}$, the group is $(\mathbb{Z}/2\mathbb{Z})^{2}$.
\end{lemma}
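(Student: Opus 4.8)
The plan is to identify $G(F_v)$ with the group $\Phi_v$ of connected components of the special fibre of the N\'{e}ron model of $E$ at $v$, and then to realise $\phi$ as the product over $v\in\Sigma$ of the specialisation maps attached to these component groups. First I would recall that, by the Kodaira--N\'{e}ron classification (cf. the table of singular fibres accompanying \cite[Theorem IV.8.2]{Silverman_book}), the smooth locus of a singular fibre $F_v$ is the disjoint union of its multiplicity-one components with the fibre nodes removed; thus the simple components are exactly the components meeting the smooth part, and the group they generate is precisely $\Phi_v$. Reading off Kodaira's table then settles the two cases needed here. For a multiplicative fibre of type $I_n$ every component has multiplicity one and the cyclic incidence pattern yields $\Phi_v\cong\mathbb{Z}/n\mathbb{Z}$, while for an additive fibre of type $I_{2n}^{*}$ the even subscript gives $\Phi_v\cong(\mathbb{Z}/2\mathbb{Z})^{2}$; the odd case $I_{2n+1}^{*}$ would instead produce $\mathbb{Z}/4\mathbb{Z}$, which is exactly why the hypothesis restricts attention to even subscripts.

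Next I would construct the map itself. Every point $P\in E(\overline{\mathbb{Q}}(t))$ corresponds to a section of the Kodaira--N\'{e}ron model $\mathcal{E}\to\mathbb{P}^{1}$, and by the defining property of the N\'{e}ron model this section meets each fibre $F_v$ in a single smooth point, hence in a unique simple component $\Theta_v(P)$. Sending $P$ to the class of $\Theta_v(P)$ in $\Phi_v=G(F_v)$ defines a map $\mathrm{comp}_v\colon E(\overline{\mathbb{Q}}(t))\to G(F_v)$, which is a group homomorphism because the group law on the generic fibre extends to the group scheme structure on the smooth locus of the N\'{e}ron model and therefore respects components. Taking the product over $v\in\Sigma$ gives $\phi=(\mathrm{comp}_v)_{v\in\Sigma}$, and restricting to the torsion subgroup produces the homomorphism in the statement.

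The substantive point, and the step I expect to be the main obstacle, is injectivity of $\phi$ on torsion. The kernel of $\phi$ is the \emph{narrow} Mordell--Weil group $E^{0}$ of sections meeting the identity component of every fibre, so it suffices to prove that $E^{0}$ is torsion-free. For this I would invoke the height pairing of Shioda (cf. \cite{Shioda_Mordell_Weil}), whose explicit formula expresses $\langle P,P\rangle$ as $2\chi(\mathcal{O}_{\mathcal{E}})+2(P\cdot O)$ minus a sum of non-negative local correction terms that vanish exactly when $P$ meets the identity component. Since $\mathcal{E}$ carries a singular fibre we have $\chi(\mathcal{O}_{\mathcal{E}})\geq 1$, and $(P\cdot O)\geq 0$ for $P\neq O$. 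A non-zero torsion section $P\in E^{0}$ would satisfy $\langle P,P\rangle=0$ with all local terms zero, forcing $0=\langle P,P\rangle\geq 2\chi(\mathcal{O}_{\mathcal{E}})>0$, a contradiction. Hence $E^{0}$ contains no non-trivial torsion and $\phi\vert_{\mathrm{tors}}$ is injective. The genuinely delicate bookkeeping throughout is the fibre-by-fibre determination of which components are simple and the correct identification of the component group for the additive type $I_{2n}^{*}$; once this is in place the height argument makes the injectivity immediate, so the bulk of the real work lies in the local component analysis underlying the Kodaira table.
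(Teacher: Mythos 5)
Your proposal is correct and follows essentially the same route as the paper: the paper's proof consists of two citations (Silverman, Theorem IV.9.2, for the fact that the product of component maps is a homomorphism, and Shioda--Sch\"utt, Corollary 7.5, for injectivity on torsion), and your argument is exactly the standard proof underlying those references --- identifying $G(F_v)$ with the component group, realising $\phi$ via the sections of the Kodaira--N\'eron model, and deducing injectivity from the torsion-freeness of the narrow Mordell--Weil group via the height formula $\langle P,P\rangle=2\chi+2(P\cdot O)-\sum_v\mathrm{contr}_v(P)$. Nothing is missing; you have simply made explicit what the paper delegates to the literature.
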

\begin{proof}
The map $\phi$ is a group homomorphism by \cite[Theorem IV.9.2]{Silverman_book}. It is injective by \cite[Corollary 7.5]{Shioda_Schutt}.
\end{proof}

\medskip\noindent
A multiplicative fiber of type $I_{n}$ has exactly $n$ components. An additive fiber of type $I_{2n}^{*}$ has $5+2n$ components.

\medskip\noindent
We gather the information about surfaces in Table \ref{table:singular_E1}, Table \ref{table:singular_E2} and Table \ref{table:singular_E3}.
We apply the Shioda-Tate formula to surfaces $\mathcal{E}_{1},\mathcal{E}_{2}$ and $\mathcal{E}_{3}$. 

\begin{lemma}\label{lemma:elliptic_types}

\noindent
\begin{enumerate}
\item[(1)] Surface $\mathcal{E}_{1}$ is of Kodaira dimension $-\infty$.
\item[(2)] Surface $\mathcal{E}_{2}$ is of Kodaira dimension $0$. 
\item[(3)] Surface $\mathcal{E}_{3}$ is of Kodaira dimension $1$.
\end{enumerate}
\end{lemma}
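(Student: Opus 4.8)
The plan is to determine the Kodaira dimension of each elliptic surface $\mathcal{E}_i$ by identifying its singular fibers and computing the fundamental numerical invariant—the Euler characteristic $\chi(\mathcal{O}_{\mathcal{E}_i})$, or equivalently the (geometric) genus and the degree of the fundamental line bundle $\mathfrak{L}$. For a relatively minimal elliptic surface $f:S\to\mathbb{P}^1$ with a section, the Euler number $e(S)=\sum_{v}e(F_v)$ equals $12\deg\mathfrak{L}$, where the local contributions $e(F_v)$ are read off from the Kodaira type of each singular fiber ($e(I_n)=n$, $e(I_n^*)=n+6$, etc.). Once $\deg\mathfrak{L}=n$ is known, the surface is rational iff $n=1$ (Kodaira dimension $-\infty$), K3 iff $n=2$ (Kodaira dimension $0$), and properly elliptic iff $n\ge 3$ (Kodaira dimension $1$), since the canonical bundle is $K_S=f^*(\mathfrak{L}\otimes\omega_{\mathbb{P}^1})=f^*\mathcal{O}(n-2)$. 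So the entire lemma reduces to computing $\sum_v e(F_v)=12n$ for each surface and reading off $n\in\{1,2,3\}$.

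\textbf{First I would} tabulate the singular fibers. The discriminants are available: for $\mathcal{E}_2$ we already have $\Delta(t)=256\,t^4(t^2-1)^4(t^4-6t^2+1)^2$, and for $\mathcal{E}_1$ one substitutes $t\mapsto\sqrt{t}$ (equivalently rewrites the discriminant of $y^2=x(x-(t-1)^2)(x-4t)$, which is $16(t-1)^2 t^2(t^2-6t+1)$ up to constants), while $\mathcal{E}_3$ arises from $\mathcal{E}_2$ by the substitution $t\mapsto u=\frac{2t}{t^2+5}$, a degree-$2$ base change that I must analyze carefully. For each surface I determine, at every root of $\Delta$ and at $t=\infty$, the Kodaira type using Tate's algorithm (the vanishing orders of $c_4$, $c_6$, $\Delta$ as in \cite[Theorem IV.8.2]{Silverman_book}). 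In particular the $I_n$ vs. $I_n^*$ distinction and the behavior at $\infty$ are what fix the invariant. Summing the Euler contributions should yield $e(\mathcal{E}_1)=12$, $e(\mathcal{E}_2)=24$, $e(\mathcal{E}_3)=36$, giving $\deg\mathfrak{L}=1,2,3$ and hence Kodaira dimensions $-\infty,0,1$ respectively; these are exactly the Tables referenced in the excerpt.

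\textbf{The main obstacle} is the fiber analysis at the point(s) at infinity and, for $\mathcal{E}_3$, the interaction of the rational base change $t\mapsto\frac{2t}{t^2+5}$ with the singular fibers, since the Euler number is not simply multiplicative under a base change: fibers over branch points and over the preimages of the original singular points can change type (a smooth fiber over a branch point stays smooth but an $I_n$ over a branch point becomes $I_{2n}$, while additive fibers can be modified), and one must also account for any new singular fibers forced at the ramification points $t^2+5=0$. I would handle this by passing to a minimal Weierstrass model over $\mathbb{P}^1_u$ (or directly over $\mathbb{P}^1_t$ after the substitution), clearing denominators, and running Tate's algorithm at each relevant place, being especially vigilant about whether the fiber at $\infty$ is additive of type $I_n^*$ (as Lemma~\ref{lemma:torsion_types} anticipates) and performing the coordinate change $x\mapsto x/s^{2k}$, $y\mapsto y/s^{3k}$ needed to minimize at $s=1/t=0$. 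Once every local type is correct, the sum $\sum_v e(F_v)$ is a mechanical check, and the three values $12,24,36$ immediately certify the claimed Kodaira dimensions.
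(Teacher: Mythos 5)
Your approach is exactly the paper's: compute $e(\mathcal{E}_i)=\sum_v e(F_v)$ from the Kodaira types of the singular fibers (Tate's algorithm) and read off the Kodaira dimension from $\deg\mathfrak{L}=e/12$. One numerical slip: for $\mathcal{E}_3$ the correct value is $e(\mathcal{E}_3)=48$, not $36$. The base change $u=\frac{2t}{t^2+5}$ has degree $2$ and its branch points $t=\pm\sqrt{5}$ (i.e.\ $u=\pm 1/\sqrt{5}$) lie over none of the singular fibers of $\mathcal{E}_2$, so every singular fiber of $\mathcal{E}_2$ pulls back to two fibers of the same type and the Euler number simply doubles: $8$ fibers of type $I_4$ and $8$ of type $I_2$, giving $32+16=48$ and $\chi(\mathcal{O}_{\mathcal{E}_3})=4$ (this value is used later in the paper for the bound $\rho(\mathcal{E}_3)\le 40$). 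Your conclusion $\kappa(\mathcal{E}_3)=1$ is unaffected, since any $\deg\mathfrak{L}\ge 3$ yields a properly elliptic surface, but the stated Euler number should be corrected.
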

\begin{proof}
The Euler-Poincar\'{e} characteristic $e(S)$ of an elliptic surface $S\rightarrow C$ (over base field of characteristic different from $2$ and $3$) equals
\[e(S)=\sum_{v\in\Sigma}e(F_{v}),\]
where $\Sigma$ is the set of points over which there are singular fibers. The local Euler number $e(F_{v})$ is equal to the number of components $m_{v}$ if the fiber has multiplicative reduction, or to $m_{v}+1$ if the reduction is additive, cf. \cite[Proposition 5.1.6]{Dolgachev}. An easy computation with the Tate algorithm (cf. Table \ref{table:singular_E1}, Table \ref{table:singular_E2} and Table \ref{table:singular_E3}) shows that $e(\mathcal{E}_{1})=12$, $e(\mathcal{E}_{2})=24$ and $e(\mathcal{E}_{3})=48$. From \cite[Corollary V.12.3]{Hulek} it follows that the Kodaira dimensions are: $\kappa(\mathcal{E}_{1})=-\infty$, $\kappa(\mathcal{E}_{2})=0$ and $\kappa(\mathcal{E}_{3})=1$, respectively.
\end{proof}

\begin{table}[h]
\begin{tabular}{ccc}
Place & Type of singular fiber & Automorphism group \\
$t=1$ & $I_{4}$ & $\mathbb{Z}/4\mathbb{Z}$  \\
$t=0$ & $I_{2}$ & $\mathbb{Z}/2\mathbb{Z}$ \\
roots of $1 - 6 t + t^2=0$ & $I_{2}$ & $\mathbb{Z}/2\mathbb{Z}$ \\
$t=\infty$ & $I_{2}$ & $\mathbb{Z}/2\mathbb{Z}$\\
\end{tabular}
\caption{Singular fibers, $E_{1}:y^2=x(x-(t-1)^2)(x-4t)$ }\label{table:singular_E1}
\end{table}

\begin{table}[h]
\begin{tabular}{ccc}
place & Type of singular fiber & Automorphism group \\
$t=1$ & $I_{4}$ & $\mathbb{Z}/4\mathbb{Z}$ \\
$t=0$ & $I_{4}$ & $\mathbb{Z}/4\mathbb{Z}$ \\
$t=-1$ & $I_{4}$ & $\mathbb{Z}/4\mathbb{Z}$ \\
roots of $-1 - 2 t + t^2=0$ & $I_{2}$ & $\mathbb{Z}/2\mathbb{Z}$ \\
roots of $-1 + 2 t + t^2=0$ & $I_{2}$ & $\mathbb{Z}/2\mathbb{Z}$ \\
$t=\infty$ & $I_{4}$ & $\mathbb{Z}/4\mathbb{Z}$ \\
\end{tabular}
\caption{Singular fibers, $E_{2}:y^2=x(x-(t^2-1)^2)(x-4t^2)$ }\label{table:singular_E2}
\end{table}

\begin{table}[h]
\begin{tabular}{ccc}
Place & Type of singular fiber & Automorphism group \\
$t=0$ & $I_{4}$ & $\mathbb{Z}/4\mathbb{Z}$ \\
roots of $5 + t^2=0$ & $I_{4}$ & $\mathbb{Z}/4\mathbb{Z}$ \\
roots of $5 - 2 t + t^2=0$ & $I_{4}$ & $\mathbb{Z}/4\mathbb{Z}$ \\
roots of $5 + 2 t + t^2=0$ & $I_{4}$ & $\mathbb{Z}/4\mathbb{Z}$ \\
roots of $25 - 20 t + 6 t^2 - 4 t^3 + t^4=0$ & $I_{2}$ & $\mathbb{Z}/2\mathbb{Z}$ \\
roots of $25 + 20 t + 6 t^2 + 4 t^3 + t^4=0$ & $I_{2}$ & $\mathbb{Z}/2\mathbb{Z}$ \\
$t=\infty$ & $I_{4}$ & $\mathbb{Z}/4\mathbb{Z}$ \\
\end{tabular}
\caption{Singular fibers, $E_{3}:y^2=x(x-(u^2-1)^2)(x-4u^2)$, $u=\frac{2t}{5+t^2}$ }\label{table:singular_E3}
\end{table}

The information gathered in Table \ref{table:singular_E1}, Table \ref{table:singular_E2} and Table \ref{table:singular_E3} allows us to prove the following results.
\begin{lemma}
The generic fiber of $\mathcal{E}_{1}$ has rank $1$ over $\overline{\mathbb{Q}}(t)$.
\end{lemma}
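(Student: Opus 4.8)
The plan is to apply the Shioda-Tate formula (Theorem \ref{theorem:Shioda_Tate}) to the surface $\mathcal{E}_{1}$, reading off every ingredient except the generic rank, which then drops out as the single remaining unknown.

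First I would pin down the Picard number $\rho(\mathcal{E}_{1})$. By Lemma \ref{lemma:elliptic_types} the surface $\mathcal{E}_{1}$ has Kodaira dimension $-\infty$ and Euler number $e(\mathcal{E}_{1})=12$; together with the presence of a zero section, this identifies $\mathcal{E}_{1}$ as a rational elliptic surface, i.e. a blow-up of $\mathbb{P}^{2}$ in nine points. For such a surface the geometric genus vanishes, $p_{g}=0$, so $H^{2}$ is entirely of Hodge type $(1,1)$ and $b_{2}(\mathcal{E}_{1})=h^{1,1}(\mathcal{E}_{1})=10$. Since a rational surface satisfies $\rho=b_{2}$ (each blow-up raises both invariants by one), I conclude $\rho(\mathcal{E}_{1})=10$.

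Next I would read the fiber contributions straight from Table \ref{table:singular_E1}. The fiber of type $I_{4}$ over $t=1$ has $m_{v}=4$ components and contributes $m_{v}-1=3$; the four fibers of type $I_{2}$ (over $t=0$, the two roots of $1-6t+t^{2}=0$, and $t=\infty$) each have $m_{v}=2$ and contribute $1$, for a total of $4$. Hence $\sum_{v\in R}(m_{v}-1)=3+4=7$. As a consistency check, $e(\mathcal{E}_{1})=4+2+2+2+2=12$ agrees with Lemma \ref{lemma:elliptic_types}. Substituting into Theorem \ref{theorem:Shioda_Tate} then yields $10=2+7+\mathop{rank}E(\overline{\mathbb{Q}}(t))$, so $\mathop{rank}E(\overline{\mathbb{Q}}(t))=1$.

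I expect the only genuinely nontrivial point to be the equality $\rho(\mathcal{E}_{1})=10$: every other quantity is purely combinatorial and already tabulated. This equality rests on the rationality of the surface (equivalently on $p_{g}=0$ for this value of $b_{2}$), and is the single place where the geometry of the surface, rather than the bookkeeping of singular fibers, enters the argument.
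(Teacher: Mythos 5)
Your proposal is correct and follows essentially the same route as the paper: rationality of $\mathcal{E}_{1}$ gives $\rho(\mathcal{E}_{1})=10$, the table of singular fibers gives $\sum_{v}(m_{v}-1)=7$, and the Shioda--Tate formula yields rank $1$. The paper additionally exhibits an explicit non-torsion section $P=(-4t,\,4\sqrt{-2}\,t(t+1))$, but this is not logically needed once the Picard number is known exactly, so your argument is complete as written.
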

\begin{proof}
The surface $\mathcal{E}_{1}$ is rational by Lemma \ref{lemma:elliptic_types}, hence $\rho(\mathcal{E}_{1})=10$. A section of $\mathcal{E}_{1}$ corresponds to a point on the generic fiber
\begin{equation}
P=(-4 t,4 \sqrt{-2} t (t+1)).
\end{equation}
As $2P$ and $4P$ are not zero, the point is non-torsion by Lemma \ref{lemma:torsion_types}. The group $E_{1}(\overline{\mathbb{Q}}(t))$ is at least of rank $1$.
By applying the Shioda-Tate formula we get
\[\rho(\mathcal{E}_{1})-2-\sum_{v\in R}(m_{v}-1)=\mathop{rank}(E_{1}(\overline{\mathbb{Q}}(t)))\]
\[\rho(\mathcal{E}_{1})-2-\sum_{v\in R}(m_{v}-1)=10-2-(4-1+4(2-1))=10-2-7=1.\]
Hence the rank equals $1$.
\end{proof}

\medskip
\medskip

\begin{lemma}\label{lemma:rank_e2}
The generic fiber of $\mathcal{E}_{2}$ has rank $2$ over $\overline{\mathbb{Q}}(t)$. 
\end{lemma}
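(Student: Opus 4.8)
The plan is to sandwich the rank: use the Shioda-Tate formula together with the fact that $\mathcal{E}_2$ is a K3 surface to bound the rank from above by $2$, and exhibit two explicit independent sections to bound it from below by $2$. By Lemma \ref{lemma:elliptic_types} the surface $\mathcal{E}_2$ has Kodaira dimension $0$ and Euler number $e(\mathcal{E}_2)=24$; since it carries an elliptic fibration with a section it is a K3 surface, so by the Lefschetz $(1,1)$-theorem its Picard number satisfies $\rho(\mathcal{E}_2)\le h^{1,1}=20$. Reading off Table \ref{table:singular_E2}, the four fibres of type $I_4$ (at $t=0,1,-1,\infty$) contribute $3$ each and the four fibres of type $I_2$ (the roots of $t^2-2t-1$ and of $t^2+2t-1$) contribute $1$ each, so $\sum_{v\in R}(m_v-1)=12+4=16$. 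Hence Theorem \ref{theorem:Shioda_Tate} reads $\rho(\mathcal{E}_2)=18+\mathop{rank}(E_2(\overline{\mathbb{Q}}(t)))$, and the K3 bound gives $\mathop{rank}(E_2(\overline{\mathbb{Q}}(t)))\le 2$.

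For the lower bound I would first observe that the substitution $t\mapsto t^2$ realizes $\mathcal{E}_2$ as a base change of $\mathcal{E}_1$, so the generator $(-4t,4\sqrt{-2}\,t(t+1))$ of $E_1(\overline{\mathbb{Q}}(t))$ pulls back to the section
\[P_1=\left(-4t^2,\;4\sqrt{-2}\,t^2(t^2+1)\right)\in E_2(\overline{\mathbb{Q}}(t)).\]
The corresponding inclusion of function fields $\overline{\mathbb{Q}}(t^2)\hookrightarrow\overline{\mathbb{Q}}(t)$ induces an injection of Mordell-Weil groups, so $P_1$ is non-torsion. Second, I would take the Pythagorean point of \cite[Lemma 6.8]{INK_Pythagorean} in the $E_2$-model, namely
\[P_2=\left((t^2+1)^2,\;2t(t^2-1)(t^2+1)\right),\]
which one checks lies on $E_2$ (here $c^2=(t^2+1)^2$ plays the role of $x$) and which is non-torsion because it specializes to a point of infinite order at every $(a,b,c)\in\mathcal{T}$.

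Independence I would deduce from the involution $\sigma\colon t\mapsto -t$. Since the Weierstrass equation of $\mathcal{E}_2$ involves only $t^2$, the curve is defined over $\overline{\mathbb{Q}}(t^2)$ and $\sigma$ acts on $E_2(\overline{\mathbb{Q}}(t))$ as a group automorphism. A direct substitution gives $\sigma(P_1)=P_1$ and $\sigma(P_2)=-P_2$, so the two points lie in the $(+1)$- and $(-1)$-eigenspaces of $\sigma$ respectively. Applying $\sigma$ to a hypothetical relation $n_1P_1+n_2P_2=0$ and adding, respectively subtracting, the result yields $2n_1P_1=0$ and $2n_2P_2=0$; as both points are non-torsion this forces $n_1=n_2=0$. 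Hence $P_1,P_2$ are independent and $\mathop{rank}(E_2(\overline{\mathbb{Q}}(t)))\ge 2$, which together with the upper bound gives rank exactly $2$.

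The main obstacle is the lower bound, and inside it the non-triviality and independence of the sections. The eigenspace argument disposes of independence cleanly once both points are known to be non-torsion, so the delicate points are really two bookkeeping facts: that $\sigma$ genuinely descends to an automorphism of the generic fibre (equivalently, that $\mathcal{E}_2$ is defined over $\overline{\mathbb{Q}}(t^2)$), and that $P_1$ is not, up to torsion, a multiple of $P_2$ — which is exactly what the opposite $\sigma$-eigenvalues preclude. As a robustness check one may instead verify independence by specializing at a convenient $t_0$ and computing on the resulting elliptic curve over a number field, since any relation upstairs would descend to the specialization.
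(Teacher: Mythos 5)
Your proof is correct, and its overall skeleton --- the Shioda--Tate formula plus the K3 bound $\rho(\mathcal{E}_2)\le 20$ for the upper bound, and two explicit sections for the lower bound --- is exactly the paper's. Where you diverge is in the two sections and, more substantively, in how independence is certified. The paper takes $P=(-4t^2,4\sqrt{-2}\,t^2(t^2+1))$ (your $P_1$) together with $Q=(2(t-1)^2,2(t-1)^2(t^2+2t-1))$, and verifies independence by computing the height pairing: $\langle P,P\rangle=2$, $\langle Q,Q\rangle=1$, $\langle P,Q\rangle=0$, so the Gram matrix is nondegenerate. You instead take the Pythagorean point $P_2=((t^2+1)^2,2t(t^2-1)(t^2+1))$ (which is $-2Q$ in the group, by the remark following the proof of Theorem \ref{Ranks_one}) and exploit the involution $t\mapsto -t$, which is legitimate since the Weierstrass equation lies over $\overline{\mathbb{Q}}(t^2)$: $P_1$ and $P_2$ sit in opposite eigenspaces, so they cannot satisfy a nontrivial relation. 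Your route avoids the height-pairing machinery (local contribution terms at the $I_2$ and $I_4$ fibres) at the cost of using the special symmetry of this fibration; the paper's route is more uniform and, as a bonus, records the lattice data $\langle Q,Q\rangle=1$ that is reused later (Lemma \ref{lemma:rank_K3}). Two small polish points: to conclude rank $\ge 2$ you should rule out relations $n_1P_1+n_2P_2=T$ with $T$ torsion, not just $=O$; your eigenspace argument does this verbatim if you run it in $E_2(\overline{\mathbb{Q}}(t))\otimes\mathbb{Q}$ (or first multiply the relation by the exponent of the torsion subgroup). And the non-torsion claim for $P_2$ via specialization does rest on \cite[Lemma 6.8]{INK_Pythagorean}; alternatively Lemma \ref{lemma:torsion_types} reduces it to checking $2P_2\ne O$ and $4P_2\ne O$, which is how the paper handles such claims.
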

\begin{proof}
The surface is $K3$ so $\rho(\mathcal{E}_{2})\leq 20$. Let us find two points of infinite order
\begin{eqnarray*}
P=(-4t^2,4\sqrt{-2}t^2(t^2+1)),\\
Q=(2(t - 1)^2,2(-1 + t)^2 (-1 + 2t + t^2)).
\end{eqnarray*}
We compute the height pairing $\langle P,P\rangle = 2$, $\langle Q,Q\rangle = 1$ and $\langle P,Q\rangle =0$. Application of the Shioda-Tate formula shows that 
\[
20\geq\rho(\mathcal{E}_{2})=2+(4(4-1)+4(2-1))+\mathop{rank}(E_{2}(\overline{\mathbb{Q}}(t)))=18+\mathop{rank}(E_{2}(\overline{\mathbb{Q}}(t)))
.\]
Hence the rank is equal to two.
\end{proof}

Application of the Shioda-Tate formula allows us to conclude that the Mordell-Weil group of $\overline{\mathbb{Q}}(t)$-rational points of the generic fiber of $\mathcal{E}_{3}$ has rank at most $6$. More precisely, $\rho(\mathcal{E}_{3})\leq 40$ (since $\chi(\mathcal{O}_{\mathcal{E}_{3}})=4$) and 
\[2+\sum_{v\in R}(m_{v}-1)=2+8(4-1)+8(2-1)=2+24+8=34.\]
There are only three sections of infinite order which are linearly independent.

\medskip\noindent
Let $X$ be any scheme over a finite field $\mathbb{F}_{q}$ of characteristic $p$. Let $\ell\neq p$ be a prime. Let us consider \'{e}tale $\ell$-adic cohomology groups $H_{\textrm{\'{e}t}}^{i}(X_{\overline{\mathbb{F}_{q}}},\mathbb{Q}_{l})=\underleftarrow{\lim} H_{\textrm{\'{e}t}}^{i}(X_{\overline{\mathbb{F}_{q}}},\mathbb{Z}/\ell^n)\otimes_{\mathbb{Z}_{\ell}}\mathbb{Q}_{\ell}$
and the groups with Tate twist $H_{\textrm{\'{e}t}}^{i}(X_{\overline{\mathbb{F}_{q}}},\mathbb{Q}_{l})(1)=H_{\textrm{\'{e}t}}^{i}(X_{\overline{\mathbb{F}_{q}}},\mathbb{Q}_{l})\otimes_{\mathbb{Z}_{\ell}}\left(\underleftarrow{lim} \mu_{\ell^{n}}\right)$, where $\mu_{\ell^n}\subset \overline{\mathbb{F}_{q}}$ is a group of $\ell^{n}$-th roots of unity. For simplicity we denote them by $H^{i}(X,\mathbb{Q}_{l})$ and $H^{i}(X,\mathbb{Q}_{l})(1)$. 

\begin{theorem}[\cite{Luijk_Heron}, Proposition 6.2]\label{theorem:Luijk_injective}
Let $A$ be a discrete valuation ring of a number field $L$ with the residue field $k\cong \mathbb{F}_{q}$. Let $S$ be an integral scheme with a morphism $S\rightarrow \mathop{Spec}A$ that is projective and smooth of relative dimension $2$. Let us assume that the surfaces $\overline{S}=S_{\overline{L}}$ and $\tilde{S}=S_{\overline{k}}$ are integral. Let $l\nmid q$ be a prime number. Then there are natural injective homomorphisms
\begin{equation}
NS(\overline{S})\otimes\mathbb{Q}_{l}\hookrightarrow NS(\tilde{S})\otimes\mathbb{Q}_{l}\hookrightarrow H^{2}_{\textrm{\'{e}t}}(\tilde{S},\mathbb{Q}_{l})(1)
\end{equation}
of finite dimensional inner product spaces over $\mathbb{Q}_{l}$. The first injection is induced by the natural injection $NS(\overline{S})\otimes\mathbb{Q}\hookrightarrow NS(\tilde{S})\otimes\mathbb{Q}$. The second injection respects the Galois action of $G(\overline{k}/k)$.
\end{theorem}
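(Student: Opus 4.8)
The plan is to treat the two injections separately: the right-hand arrow is the $l$-adic cycle class map of the surface $\tilde{S}$, and the left-hand arrow is the specialization homomorphism of N\'{e}ron--Severi groups attached to the smooth proper family $S\to\mathop{Spec}A$. For the first I would prove injectivity after $\otimes\mathbb{Q}_{l}$ from the non-degeneracy of the intersection form; for the second I would factor it through cycle class maps and appeal to the smooth proper base change theorem.

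First I would recall the $l$-adic cycle class map $c_{l}\colon NS(\tilde{S})\otimes\mathbb{Q}_{l}\to H^{2}_{\textrm{\'{e}t}}(\tilde{S},\mathbb{Q}_{l})(1)$, sending the class of a divisor to its cohomology class. Because a divisor defined over $k$ has a $G(\overline{k}/k)$-invariant closure, $c_{l}$ is equivariant for the Galois action, which gives the last assertion of the statement. For injectivity I would use that the cup product pairing on $H^{2}_{\textrm{\'{e}t}}(\tilde{S},\mathbb{Q}_{l})(1)$ pulls back under $c_{l}$ to the intersection pairing on $NS(\tilde{S})\otimes\mathbb{Q}_{l}$. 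Since $NS(\tilde{S})$ is finitely generated and torsion-free, and since on a smooth projective surface the classes numerically equivalent to zero are torsion in $NS(\tilde{S})$, the intersection pairing is non-degenerate on $NS(\tilde{S})\otimes\mathbb{Q}$. Hence the composite $NS(\tilde{S})\otimes\mathbb{Q}_{l}\to H^{2}_{\textrm{\'{e}t}}(\tilde{S},\mathbb{Q}_{l})(1)\to\bigl(NS(\tilde{S})\otimes\mathbb{Q}_{l}\bigr)^{\vee}$ obtained via Poincar\'{e} duality is injective, so $c_{l}$ is injective and in fact an isometry onto its image; the same argument applied to $\overline{S}$ gives the analogous injection for the geometric generic fiber.

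Next I would construct the specialization map $\mathrm{sp}\colon NS(\overline{S})\to NS(\tilde{S})$. Given a prime divisor on $\overline{S}$ I would take its Zariski closure in $S$, which is flat over the relevant localization of $A$, and intersect it with the special fiber; checking that this descends modulo algebraic equivalence yields a homomorphism. The proper smooth base change theorem furnishes a canonical specialization isomorphism $\mathrm{sp}^{*}\colon H^{2}_{\textrm{\'{e}t}}(\overline{S},\mathbb{Q}_{l})(1)\xrightarrow{\ \sim\ }H^{2}_{\textrm{\'{e}t}}(\tilde{S},\mathbb{Q}_{l})(1)$, and the key compatibility is $c_{l}\circ\mathrm{sp}=\mathrm{sp}^{*}\circ c_{l}$. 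Granting this, since $c_{l}$ on $\overline{S}$ is injective by the previous paragraph and $\mathrm{sp}^{*}$ is an isomorphism, it follows that $\mathrm{sp}$ is injective after $\otimes\mathbb{Q}_{l}$, giving the first arrow. Because $\mathrm{sp}$ is defined by intersecting closures it preserves intersection numbers, so the whole diagram is compatible with the inner product structures.

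I expect the compatibility $c_{l}\circ\mathrm{sp}=\mathrm{sp}^{*}\circ c_{l}$ to be the main obstacle: it requires that the formation of cycle classes commutes with the specialization isomorphism coming from smooth proper base change, which is a statement about the functoriality of the cycle class map in a relative mixed-characteristic situation. One must show that the flat closure of a cycle exists, that intersecting with the special fiber produces a genuine algebraic cycle, and that its cohomology class matches $\mathrm{sp}^{*}$ applied to the class of the original cycle. The remaining ingredients---non-degeneracy of the intersection form and Poincar\'{e} duality on a smooth projective surface---are standard.
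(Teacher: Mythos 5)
The paper offers no proof of this statement: it is imported wholesale as Proposition 6.2 of the cited work of van Luijk and used as a black box. Your outline --- injectivity of the cycle class map from non-degeneracy of the intersection pairing on $NS(\tilde{S})\otimes\mathbb{Q}$, a specialization homomorphism built from flat closures of divisors, and injectivity of that map transferred through the smooth proper base change isomorphism on $H^{2}_{\textrm{\'{e}t}}$ --- is exactly the standard argument given there, so it is correct and takes the same route (the only nitpick being that you do not need $NS(\tilde{S})$ to be torsion-free, only that numerically trivial classes are torsion).
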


For any prime $p$ and any positive integer $r$ and a variety $X$ over $\mathbb{F}_{p^{r}}$ we denote by $F_{X}:X\rightarrow X$ the absolute Frobenius morphism which acts as the identity on points and as $f\mapsto f^p$ on the structure sheaf. Let $\Phi_{X}=(F_{X})^{r}$ and $\overline{X}=X_{\overline{\mathbb{F}}_{p^r}}$ and we denote by $\Phi_{X}\times 1$ the morphism which acts on $Z\times\mathop{Spec}\overline{\mathbb{F}}_{p^r}$. This induces an automorphism $\Phi_{X}^{*}$ of $H_{\textrm{\'{e}t}}^{i}(\overline{X},\mathbb{Q}_{l})$. 

\medskip
\begin{theorem}[\cite{Luijk_Heron}, Corollary 2.3]\label{theorem:Luijk_roots}
With notation as in the previous theorem, the ranks of $NS(\overline{S})$ and $NS(\tilde{S})$ are bounded from above by the number of eigenvalues of the linear map $\Phi_{\tilde{S}}^{*}$ for which $\lambda/q$ is a root of unity, counted with multiplicity.
\end{theorem}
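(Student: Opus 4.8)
The plan is to deduce this statement formally from the previous theorem (Theorem \ref{theorem:Luijk_injective}) together with the fact that the geometric Frobenius acts on $NS(\tilde{S})$ through a finite group. First I would recall that by Theorem \ref{theorem:Luijk_injective} there is a Galois-equivariant injection $NS(\tilde{S})\otimes\mathbb{Q}_{l}\hookrightarrow H^{2}_{\textrm{\'{e}t}}(\tilde{S},\mathbb{Q}_{l})(1)$ whose source has dimension $\mathop{rank}NS(\tilde{S})$. Since the first arrow $NS(\overline{S})\otimes\mathbb{Q}_{l}\hookrightarrow NS(\tilde{S})\otimes\mathbb{Q}_{l}$ is also injective, it suffices to bound $\mathop{rank}NS(\tilde{S})$; the bound for $NS(\overline{S})$ then follows immediately. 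Thus the whole problem reduces to identifying the eigenvalues of $\Phi_{\tilde{S}}^{*}$ on the image of $NS(\tilde{S})$ inside the twisted cohomology.

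The heart of the argument is to show that $\Phi_{\tilde{S}}^{*}$ acts on $NS(\tilde{S})$ with finite order, equivalently that all its eigenvalues on $NS(\tilde{S})\otimes\mathbb{Q}_{l}$ are roots of unity. For this I would use that $S\rightarrow\mathop{Spec}A$ is projective: a relatively ample sheaf restricts to an ample class $h\in NS(\tilde{S})$ that is defined over $k\cong\mathbb{F}_{q}$, hence fixed by $\Phi_{\tilde{S}}^{*}$. The Frobenius preserves the intersection pairing on the finitely generated, torsion-free group $NS(\tilde{S})$, and by the Hodge index theorem the orthogonal complement $h^{\perp}$ is negative definite. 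Therefore $\Phi_{\tilde{S}}^{*}$ restricts to an isometry of the negative definite lattice $h^{\perp}$, and the isometry group of such a lattice is finite; consequently $\Phi_{\tilde{S}}^{*}$ has finite order on all of $NS(\tilde{S})$ and its eigenvalues are roots of unity.

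It remains to transport this statement across the Tate twist. The cycle class map is compatible with the Galois action in such a way that, on the untwisted group $H^{2}_{\textrm{\'{e}t}}(\tilde{S},\mathbb{Q}_{l})$, a class coming from a divisor acquires an extra factor of $q$ under geometric Frobenius; concretely, if $\mu$ is an eigenvalue of $\Phi_{\tilde{S}}^{*}$ on the image of $NS(\tilde{S})$ in $H^{2}_{\textrm{\'{e}t}}(\tilde{S},\mathbb{Q}_{l})(1)$, then the corresponding eigenvalue on $H^{2}_{\textrm{\'{e}t}}(\tilde{S},\mathbb{Q}_{l})$ is $\lambda=q\mu$. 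Combining this with the previous paragraph, $\mu$ is a root of unity, so every eigenvalue $\lambda$ of $\Phi_{\tilde{S}}^{*}$ on $H^{2}_{\textrm{\'{e}t}}(\tilde{S},\mathbb{Q}_{l})$ arising from $NS(\tilde{S})$ satisfies that $\lambda/q=\mu$ is a root of unity. Since the image of $NS(\tilde{S})\otimes\mathbb{Q}_{l}$ is a $\Phi_{\tilde{S}}^{*}$-stable subspace of dimension $\mathop{rank}NS(\tilde{S})$, the number of eigenvalues $\lambda$ of $\Phi_{\tilde{S}}^{*}$ with $\lambda/q$ a root of unity, counted with multiplicity, is at least $\mathop{rank}NS(\tilde{S})$, which is the desired bound.

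I expect the main obstacle to be the finiteness statement in the second paragraph: one must be careful that a Galois-invariant ample class genuinely exists, which is exactly where projectivity of $S\rightarrow\mathop{Spec}A$ enters, and that the Hodge index theorem applies to give negative definiteness of $h^{\perp}$, so that the orthogonal group is finite. The bookkeeping with the Tate twist in the third paragraph is standard but must be carried out with a fixed normalization of the Frobenius action in order not to accidentally invert the factor of $q$.
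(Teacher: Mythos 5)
Your argument is correct. Note, however, that the paper itself offers no proof of this statement: it is quoted verbatim from van Luijk (Corollary 2.3 of \cite{Luijk_Heron}), so the only comparison available is with the argument in that source. There the key point --- that $\Phi_{\tilde{S}}^{*}$ acts on $NS(\tilde{S})$ with finite order, hence with root-of-unity eigenvalues --- is obtained arithmetically: $NS(\tilde{S})$ is finitely generated, so all of its generators are represented by divisors defined over some fixed finite extension $\mathbb{F}_{q^{n}}$, whence $(\Phi_{\tilde{S}}^{*})^{n}$ acts trivially. You reach the same conclusion by a more geometric route: a Frobenius-invariant ample class $h$ coming from the projectivity of $S\rightarrow \mathop{Spec}A$, negative definiteness of $h^{\perp}$ via the Hodge index theorem, and finiteness of the isometry group of a definite lattice. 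Both are sound; the arithmetic argument is shorter and needs no input from intersection theory, while yours has the mild virtue of not requiring one to know over which field the generators of $NS(\tilde{S})$ are defined, and it isolates exactly where projectivity is used. Your handling of the Tate twist and of the passage from a $\Phi_{\tilde{S}}^{*}$-stable subspace of dimension $\mathop{rank}NS(\tilde{S})$ to a count of eigenvalues with multiplicity (via divisibility of characteristic polynomials) is also correct, as is the reduction of the bound for $NS(\overline{S})$ to that for $NS(\tilde{S})$ via the first injection of Theorem \ref{theorem:Luijk_injective}.
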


To be able to use the above theorem effectively, we recall the Lefschetz trace formula, cf. \cite[VI, Theorem 12.3]{Milne_etale_book}.
\begin{theorem}\label{theorem:Lefschetz}
Let $X$ be a smooth projective variety over $\mathbb{F}_{q}$ of dimension $n$. For any prime $l\nmid q$ and any integer $m$, we have
\[\# X(\mathbb{F}_{q^m})=\sum_{i=0}^{n}(-1)^{i}Tr((\Phi_{X}^{*})^{m}\mid H^{i}(X_{\overline{\mathbb{F}_{q}}},\mathbb{Q}_{l})).\]
\end{theorem}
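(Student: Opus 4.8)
The plan is to derive the formula from the general Grothendieck--Lefschetz fixed-point formula in $\ell$-adic cohomology, applied to the $m$-th iterate of the $q$-power Frobenius $\Phi_{X}$. Throughout, set $n=\dim X$ and recall that $H^{i}_{\textrm{\'{e}t}}(\overline{X},\mathbb{Q}_{l})$ vanishes outside the range $0\leq i\leq 2n$, so the alternating sum is finite (the upper summation limit is read as $2n$).

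First I would invoke the cohomological form of the Lefschetz--Verdier trace formula: for a smooth projective variety $X$ over the algebraically closed field $\overline{\mathbb{F}_{q}}$ and any endomorphism $u\colon\overline{X}\rightarrow\overline{X}$, one has
\[
\sum_{i=0}^{2n}(-1)^{i}Tr(u^{*}\mid H^{i}(\overline{X},\mathbb{Q}_{l}))=(\Gamma_{u}\cdot\Delta),
\]
where $\Gamma_{u}\subset\overline{X}\times\overline{X}$ is the graph of $u$ and $\Delta$ is the diagonal. This is where the substantial machinery enters: one uses Poincar\'{e} duality together with the K\"{u}nneth decomposition of $H^{*}(\overline{X}\times\overline{X},\mathbb{Q}_{l})$ to express the class of $\Delta$, identifies the cycle class of $\Gamma_{u}$ with the graph of $u^{*}$ under this decomposition, and then reads the cup-product pairing of the two classes off as the alternating trace. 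Finiteness of the cohomology groups and the nondegeneracy of the duality pairing both rely on $X$ being smooth and projective.

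Next I would specialise to $u=\Phi_{X}^{m}$, the $q^{m}$-power Frobenius. Its fixed points on $\overline{X}(\overline{\mathbb{F}_{q}})$ are exactly the points whose coordinates satisfy $x^{q^{m}}=x$, that is, the set $X(\mathbb{F}_{q^{m}})$, which is finite. The decisive geometric input is that Frobenius meets the diagonal transversally: since the differential of the $q^{m}$-power map vanishes identically in characteristic $p$, at each fixed point $x$ the tangent space to $\Gamma_{\Phi_{X}^{m}}$ is $T_{x}\overline{X}\times\{0\}$, which is complementary to the tangent space $\{(v,v)\}$ of $\Delta$. Hence every fixed point contributes local intersection multiplicity exactly $1$, and therefore $(\Gamma_{\Phi_{X}^{m}}\cdot\Delta)=\#X(\mathbb{F}_{q^{m}})$.

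Combining the two steps yields $\#X(\mathbb{F}_{q^{m}})=\sum_{i}(-1)^{i}Tr((\Phi_{X}^{*})^{m}\mid H^{i}(\overline{X},\mathbb{Q}_{l}))$, which is the claim. The main obstacle is the first step: the general Lefschetz--Verdier formula is a deep result resting on the whole formalism of \'{e}tale cohomology---proper and smooth base change, Poincar\'{e} duality, the cycle class map, and the compatibility of these with algebraic correspondences---and it is precisely the content cited here from Milne. By contrast, the transversality of Frobenius, although it is the conceptually decisive point that forces each rational point to be counted once rather than with a nontrivial multiplicity, reduces to the elementary vanishing of the Frobenius differential once the general formula is in hand.
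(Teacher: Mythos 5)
The paper does not prove this statement at all: it is quoted verbatim as a known result with a citation to Milne's book, so there is no internal proof to compare against. Your sketch is the standard argument underlying that citation --- the cohomological Lefschetz--Verdier formula reducing the alternating trace to the intersection number $(\Gamma_{u}\cdot\Delta)$, followed by the observation that the vanishing differential of Frobenius forces transversality with the diagonal, so that $(\Gamma_{\Phi_{X}^{m}}\cdot\Delta)=\#X(\mathbb{F}_{q^{m}})$ --- and it is correct as an outline, with the heavy lifting (K\"{u}nneth, Poincar\'{e} duality, cycle classes) properly flagged as the content of the cited reference rather than silently assumed. You are also right to read the upper summation limit as $2n$; as printed in the paper the sum stops at $n=\dim X$, which is a typo, since the application to surfaces uses $H^{i}$ for $i$ up to $4$.
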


We explain the use of the Lefschetz trace formula in numerical computation of the characteristic polynomial of the Frobenius automorphism, which we apply in the proofs of Lemma \ref{lemma:rank_e1_prime} and Lemma \ref{lemma:rank_e1_bis} below.
We proceed with $X=S_{\overline{\mathbb{F}_{q}}}$, an elliptic surface fibered over $\mathbb{P}^{1}$. Note that $\dim H^{1}(X,\mathbb{Q}_{l})=\dim H^{3}(X,\mathbb{Q}_{l})$  by \cite[Corollary 2A10]{Kleiman_in_dix_expose} and $\dim H^{1}(X,\mathbb{Q}_{l})=0$ by \cite[Corollary 5.2.2]{Dolgachev}. Automorphism $\Phi_{X}^{*}$ acts on $H^{4}(X,\mathbb{Q}_{l})\cong\mathbb{Q}_{l}$ by multiplication by $q^2$. By the Lefschetz trace formula we obtain
\[\mathop{Tr}((\Phi_{X}^{*})^{m}\mid H^{2}(X,\mathbb{Q}_{l}))=\# X(\mathbb{F}_{q^m})-1-q^{2m}.\]
Let $V$ be a linear subspace of $H^{2}(X,\mathbb{Q}_{l})$ generated by components of singular fibers and sections. Let $W=H^{2}(X,\mathbb{Q}_{l})/V$. By the multiplicativity of the characteristic polynomial $\mathop{char}(\Phi_{X}^{*})$ we have that
\[\mathop{char}(\Phi_{X}^{*})=\mathop{char}(\Phi_{X}^{*}|V)\cdot \mathop{char}(\Phi_{X,W}^{*})\]
where the operator $\Phi_{X,W}^{*}:W\rightarrow W$ is induced by $\Phi_{X}^{*}$. Moreover,
\[\mathop{Tr}((\Phi_{X}^{*})^m)=\mathop{Tr}((\Phi_{X}^{*}|V)^m)+ \mathop{Tr}((\Phi_{X,W}^{*})^m)\]
for any $m$. For $T$ a linear operator acting on a finite dimensional vector space $U$, the characteristic polynomial $p(x)=\det(I\cdot x - T)$ can be computed if the traces $t_{n}=\mathop{Tr}(T^{n})$ are known for $0\leq n\leq \dim U$. In order to do that, we expand
\begin{equation}\label{equation:coefficients_traces}
p(x)=\frac{x^{\dim U}}{\mathop{exp}(\sum_{r=1}^{\infty}t_{r}\frac{x^{-r}}{r})}
\end{equation}
as a series of $\frac{1}{x}$ and truncate the series to the polynomial part. In numerical computations below, we put $T=\Phi_{X}^{*}$.

\section{Twisted elliptic surfaces}\label{subsection:twists}
Now we prove that the rank of the Mordell-Weil group over $\overline{\mathbb{Q}}(t)$ of the elliptic curve
\[y^2=x(x-(u^2-1)^2)(x-4u^2)\]
for $u=\frac{2t}{5+t^2}$ is equal to $3$. For this purpose the notion of a twist of an elliptic curve is needed. In this section we use the approach from the paper \cite{Kloosterman_rank_15} by Remke Kloosterman. We assume that the base curve $C$ of the elliptic fibration $\mathcal{E}\rightarrow C$ is defined over a field of characteristic not equal to $2$ or $3$.

\medskip\noindent
Let $C$ be a smooth curve over $k=\overline{k}$, and let $k(C)$ be the function field of $C$. Let $E$ be an elliptic curve over $k(C)$ given by the Weierstrass equation
\begin{equation}\label{eq:Weierstrass_equation}
E:y^2=x^3+Ax+B
\end{equation}
for $A,B\in k(C)$. Let us fix an element $u\in k(C)^{*}$. We consider the quadratic twist
\[E^{(u)}:uy^2=x^3+Ax+B\]
of the curve $E$ by the element $u$.

\begin{proposition}\label{theorem:ranks}
With the above notation the following equality holds
\[\mathop{rank}E(k(C))+\mathop{rank}E^{(u)}(k(C))=\mathop{rank}E(k(C)(\sqrt{u})).\]
\end{proposition}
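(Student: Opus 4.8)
The plan is to exploit the Galois action of $\mathrm{Gal}(k(C)(\sqrt{u})/k(C))=\{1,\sigma\}$ on the Mordell-Weil group $E\left(k(C)(\sqrt{u})\right)$ and decompose it into eigenspaces for $\sigma$. Let $L=k(C)(\sqrt{u})$ and $F=k(C)$. First I would observe that since $A,B\in F$, the curve $E$ is defined over $F$, and $\sigma$ acts on the points $E(L)$ in a way compatible with the group law, so $E(L)$ becomes a $\mathbb{Z}[\sigma]$-module. The key algebraic input is that $\sigma^2=1$, so after tensoring with $\mathbb{Q}$ we can split $E(L)\otimes\mathbb{Q}$ into the $(+1)$-eigenspace and the $(-1)$-eigenspace of $\sigma$.

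The heart of the argument is to identify these two eigenspaces. The $(+1)$-eigenspace consists (up to finite index) of points fixed by $\sigma$, which are exactly the points in $E(F)$; this contributes $\mathop{rank}E(F)$. For the $(-1)$-eigenspace, I would use the standard isomorphism realizing the quadratic twist: a point $P=(x,y)\in E(L)$ with $\sigma(P)=-P$ has $\sigma(x)=x$ and $\sigma(y)=-y$, so $x\in F$ and $y=\sqrt{u}\cdot y'$ for some $y'\in F$. Substituting $y=\sqrt{u}\,y'$ into $y^2=x^3+Ax+B$ gives $u y'^2=x^3+Ax+B$, which is precisely the defining equation of the twist $E^{(u)}$. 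This sets up a bijection (a group isomorphism up to $2$-torsion) between the $(-1)$-eigenspace of $E(L)$ and $E^{(u)}(F)$, so the $(-1)$-eigenspace contributes $\mathop{rank}E^{(u)}(F)$.

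\noindent
Putting the two eigenspace computations together yields
\[
\mathop{rank}E(L)=\dim_{\mathbb{Q}}\left(E(L)\otimes\mathbb{Q}\right)=\mathop{rank}E(F)+\mathop{rank}E^{(u)}(F),
\]
which is exactly the claimed identity. To make this rigorous I would phrase the eigenspace decomposition as the statement that the natural map $E(F)\oplus E^{(u)}(F)\to E(L)$ has finite kernel and finite cokernel, since $2$-torsion and the failure of $\sigma\pm 1$ to be exact integrally only affect torsion; ranks are insensitive to such finite-index discrepancies.

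\noindent
The step I expect to be the main obstacle is verifying cleanly that the twisting map on the $(-1)$-eigenspace is genuinely a group homomorphism (not merely a bijection of point sets) and that its kernel and cokernel are finite. The subtlety is that the correspondence $(x,y)\mapsto(x,y')$ with $y=\sqrt{u}\,y'$ is defined pointwise, and one must check compatibility with the two different addition laws on $E$ and $E^{(u)}$, as well as account for the $2$-torsion points (where $y=0$) that lie in both eigenspaces simultaneously. Once the homomorphism property is established, the finiteness of kernel and cokernel follows formally from the eigenspace decomposition over $\mathbb{Q}$, and the rank identity is immediate.
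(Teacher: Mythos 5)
Your proof is correct and is the standard argument; the paper itself gives no proof, simply citing Exercise 10.16 of Silverman's \emph{The Arithmetic of Elliptic Curves}, whose intended solution is exactly this eigenspace decomposition of $E\bigl(k(C)(\sqrt{u})\bigr)\otimes\mathbb{Q}$ under the Galois involution $\sigma$. The one step you flag as a potential obstacle is not a real one: the map $(x,y')\mapsto(x,\sqrt{u}\,y')$ is an isomorphism of the smooth projective curves $E^{(u)}$ and $E$ over $k(C)(\sqrt{u})$ carrying the point at infinity to the point at infinity, hence automatically a group isomorphism, and it satisfies $\sigma(\phi(P))=-\phi(\sigma(P))$, which is precisely what identifies $E^{(u)}(k(C))$ with the $(-1)$-eigenspace up to the finite discrepancies you already account for.
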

\begin{proof}
See \cite[Exercise 10.16]{Silverman_arithmetic}.
\end{proof}

\begin{definition}\label{definition:twist}
Let $f:\mathcal{E}\rightarrow C$ be an elliptic surface. Fix two points $P,Q\in C(k)$. Let $E/k(C)$ be the generic fiber of $f$ given by the Weierstrass equation (\ref{eq:Weierstrass_equation}). An elliptic surface $g:\mathcal{E}'\rightarrow C$ \textit{is the twist of }$f$\textit{ by points} $P$\textit{ and }$Q$ if the generic fiber of $g$ is isomorphic over $k(C)$ to $E^{(u)}$, where $u\in k(C)^{*}$ and the function $u$ satisfies the following properties of the valuations:
\[\textrm{ord}_{P}(u)\equiv 1\textrm{ mod }2\]
and
\[\textrm{ord}_{Q}(u)\equiv 1\textrm{ mod }2.\]
In addition, we require that for all $R\neq P,Q$
\[\textrm{ord}_{R}(u)\equiv 0\textrm{ mod }2.\]
\end{definition}

\begin{remark}
For any pair of points $P$ and $Q$ we form a divisor $(P)-(Q)\in\textrm{Div}^{0}(C)$. The group of $k$-rational points of the Jacobian $\textrm{Jac}(C)$ of the curve $C$ equals $\textrm{Pic}^{0}(C)$. Since $k$ is algebraically closed, the group $\textrm{Pic}^{0}(C)$ is $2$-divisible. We find that $(P)-(Q)=2D+\textrm{div}(f)$ for a function $f\in k(C)^{*}$ and $D\in\textrm{Div}^{0}(C)$. We put $u:=f$. Let $u'$ be another function such that $\textrm{div}(u')\equiv(P)-(Q) \textrm{ mod }2\textrm{Div}^{0}(C)$. The twists $E^{(u)}$ and $E^{(u')}$ may not be isomorphic over $k(C)$. We have $\textrm{div}(\frac{u}{u'})=2T$ for some divisor $T\in\textrm{Div}^{0}(C)$. If the genus $g$ of $C$ is greater than $0$, then $T\in\textrm{Jac}(C)(k)$ is a $2$-torsion point. There are $2^{2g}$ distinct torsion points in $\textrm{Jac}(C)(k)$, hence there are $2^{2g}$ distinct twists by points $P,Q$, up to a $k(C)$-isomorphism. However, for $C=\mathbb{P}^{1}$, a pair of points $P,Q$ determines a twist uniquely.
\end{remark}

\begin{lemma}
Let $f:\mathcal{E}\rightarrow C$ be an elliptic surface and let $f^{(P,Q)}:\mathcal{E}^{(P,Q)}\rightarrow C$ be the twist by $P,Q\in C(k)$. There exists a double cover $\phi:C'\rightarrow C$ ramified at $P$ and $Q$ such that the relatively minimal nonsingular models of $\mathcal{E}\times_{C}C'\rightarrow C'$ and $\mathcal{E}^{(P,Q)}\times_{C}C'\rightarrow C'$ are isomorphic as fibered surfaces.
\end{lemma}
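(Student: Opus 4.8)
The plan is to take for $\phi:C'\to C$ the double cover cutting out the quadratic extension $k(C')=k(C)(\sqrt{u})$, where $u\in k(C)^{*}$ is the twisting function supplied by Definition \ref{definition:twist}, and then to show that after this base change the two elliptic surfaces acquire $k(C')$-isomorphic generic fibers; uniqueness of the Kodaira-N\'{e}ron model then forces the relatively minimal models to agree. First I would fix a Weierstrass equation $E:y^{2}=x^{3}+Ax+B$ for the generic fiber of $f$ and recall that, by construction, the generic fiber of $f^{(P,Q)}$ is $E^{(u)}:uy^{2}=x^{3}+Ax+B$ with $\textrm{ord}_{P}(u)\equiv\textrm{ord}_{Q}(u)\equiv 1\pmod 2$ and $\textrm{ord}_{R}(u)\equiv 0\pmod 2$ for every other closed point $R$ of $C$. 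Since $u$ has odd order at $P$ it is not a square in $k(C)^{*}$, so $k(C)(\sqrt{u})/k(C)$ is a genuine degree-two extension; I let $C'$ be its smooth projective model and $\phi:C'\to C$ the induced morphism, which is then a double cover with $C'$ irreducible.

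Next I would identify the branch locus. As $\textrm{char }k\neq 2$, the cover defined by $z^{2}=u$ is ramified over a point $R$ of $C$ precisely when $\textrm{ord}_{R}(u)$ is odd. The parity conditions of Definition \ref{definition:twist} say that this occurs exactly at $R=P$ and $R=Q$, so $\phi$ is ramified at $P$ and $Q$ and nowhere else, exactly as claimed.

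The key step is the comparison of generic fibers after base change. Writing $s=\sqrt{u}\in k(C')^{*}$, the substitution $Y=sy$ transforms $E^{(u)}:uy^{2}=x^{3}+Ax+B$ into $Y^{2}=x^{3}+Ax+B$, so that $E^{(u)}$ becomes $k(C')$-isomorphic to $E$. Consequently the generic fibers of $\mathcal{E}\times_{C}C'$ and $\mathcal{E}^{(P,Q)}\times_{C}C'$, namely $E_{k(C')}$ and $E^{(u)}_{k(C')}$, are isomorphic as elliptic curves over $k(C')$. By the Kodaira-N\'{e}ron theorem the relatively minimal nonsingular elliptic surface over $C'$ with a prescribed generic fiber exists and is unique up to isomorphism of fibered surfaces; applying this uniqueness to the common generic fiber yields an isomorphism of the two relatively minimal models commuting with the projections to $C'$, which is the desired statement.

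The only point that requires genuine care is the ramification computation: one must use the full parity hypothesis on $u$ to guarantee simultaneously that $\sqrt{u}$ generates a degree-two extension (hence $C'$ is irreducible) and that the resulting cover is branched at $P,Q$ and at no other point. Once this is settled, the remainder is a formal consequence of the triviality of a quadratic twist over the field where the twisting element becomes a square, together with the uniqueness of the Kodaira-N\'{e}ron model.
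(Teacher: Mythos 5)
Your proposal is correct and follows essentially the same route as the paper: construct $C'$ as the double cover corresponding to $k(C)(\sqrt{u})$, verify ramification at $P$ and $Q$ from the parity of $\textrm{ord}(u)$, note that the twist trivializes over $k(C')$, and conclude that the relatively minimal models coincide. The only cosmetic difference is in the last step, where you invoke uniqueness of the Kodaira--N\'{e}ron model directly, while the paper re-derives that uniqueness by observing that a birational map between relatively minimal elliptic surfaces over $C'$ involves no $(-1)$-curves in fibers and hence extends to an isomorphism.
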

\begin{proof}
Let $E$ denote the generic fiber of the elliptic fibration $f$. Let $u$ be the function in $k(C)$ satisfying the conditions of Definition \ref{definition:twist} for the points $P$ and $Q$ in $C(k)$. The generic fiber of $f^{(P,Q)}$ is the twist $E^{(u)}$ of the curve $E$. 

\noindent
There exists a projective curve $C'$ and a surjective morphism $\phi:C'\rightarrow C$ such that $u\circ \phi=v^2$ for some element $v\in k(C')$. We denote by $e_{\phi}(R)$ the ramification index of the morphism $\phi$ at the point $R$ in the fiber above the point $\phi(R)\in C(k)$. By definition, the function $u$ has a divisor $\mathop{div}(u)=(P)+(Q)+2D$ for some $D\in\mathop{Div}(C)$. Hence
\begin{equation}\label{equation:div_properties}
\mathop{div}(u\circ\phi)=\phi^{*}(\mathop{div}u)=\sum_{R\in\phi^{-1}(P)}e_{\phi}(R)(R)+\sum_{R'\in\phi^{-1}(Q)}e_{\phi}(R')(R')+\phi^{*}D=2\mathop{div}v,
\end{equation}
where $\phi^{*}:\mathop{Div}(C)\rightarrow\mathop{Div}(C')$ denotes the induced map. The extension $k(C')$ of $k(C)$ is of degree $2$, so
\begin{equation}\label{equation:degree_properties}
2=\mathop{deg}\phi=\sum_{R\in\phi^{-1}(P)}e_{\phi}(R)=\sum_{R'\in\phi^{-1}(Q)}e_{\phi}(R').
\end{equation}
Identities (\ref{equation:div_properties}) and (\ref{equation:degree_properties}) imply that $\phi$ is ramified at $P$ and $Q$ and the preimages $\phi^{-1}(P)$ and $\phi^{-1}(Q)$ are singletons. 

\medskip\noindent
Let $S_{1}=\mathcal{E}\times_{C}C'$ and $S_{2}=\mathcal{E}^{(P,Q)}\times_{C}C'$ denote the surfaces obtained from $\mathcal{E}$ and $\mathcal{E}^{(P,Q)}$ by the base change $\phi:C'\rightarrow C$. The morphisms $f$ and $f^{(P,Q)}$ are projective, hence $S_{1}\rightarrow C'$ and $S_{2}\rightarrow C'$ are projective. As the base field $k$ is algebraically closed, all but finitely many fibers of $S_{1}\rightarrow C'$ and $S_{2}\rightarrow C'$ are nonsingular elliptic. Let $\tilde{S_{1}}$ denote a relatively minimal nonsingular model of $S_{1}$, respecting the fibration over $C'$. Similarly, let $\tilde{S_{2}}$ denote the relatively minimal nonsingular model of $S_{2}$. By a linear change of coordinates, the generic fibers $E$ and $E^{(u)}$ are isomorphic over $k(C')$. This implies that there is a birational map $\psi:\tilde{S_{1}}\dashrightarrow\tilde{S_{2}}$. Each such map is a composition of smooth blow-ups and blow-downs. The desingularizations $\tilde{S_{1}}$ and $\tilde{S_{2}}$ are isomorphic outside the singular fibers. The surfaces $\tilde{S_{1}}$ and $\tilde{S_{2}}$ are relatively minimal with respect to $C'$, so the fibers do not contain $(-1)$-curves. Hence, the map $\psi$ is a trivial composition, hence extends to an isomorphism.
\end{proof}

We introduce the following elliptic surfaces which will be used in the computation of ranks of the families associated to Pythagorean triples, cf. proofs of Theorems \ref{Ranks_one}, \ref{Ranks_two}.
\begin{definition}

\noindent
\begin{enumerate}
\item[(1)] Let $\mathcal{E}_{1}\rightarrow\mathbb{P}^{1}$ be the elliptic surface with the generic fiber
\[E_{1}:y^2=x(x-(t-1)^2)(x-4t).\]
\item[(2)] We denote by $\mathcal{E}_{1}'=\mathcal{E}_{1}^{(\frac{1}{5},\infty)}\rightarrow\mathbb{P}^{1}$ the twist of $\mathcal{E}_{1}$ by the points $\frac{1}{5}$ and $\infty$, which has the generic fiber
\[E_{1}':-(-1+5t)y^2=x(x-(t-1)^2)(x-4t).\]

\item[(3)] We denote by $\mathcal{E}_{1}''=(\mathcal{E}_{1}')^{(0,\infty)}\rightarrow\mathbb{P}^{1}$ the twist of $\mathcal{E}_{1}'$ by the points $0$ and $\infty$, which has the generic fiber
\[E_{1}'':-t(-1+5t)y^2=x(x-(t-1)^2)(x-4t).\]

\item[(4)] Let $\mathcal{E}_{2}\rightarrow \mathbb{P}^{1}$ be the elliptic surface with the generic fiber
\[E_{2}:y^2=x(x-(t^2-1)^2)(x-4t^2).\]
\item[(5)] We denote by $\mathcal{E}_{2}'=\mathcal{E}_{2}^{(\frac{-1}{\sqrt{5}},\frac{1}{\sqrt{5}})}\rightarrow \mathbb{P}^{1}$ the twist of $\mathcal{E}_{2}$ by the points $\frac{-1}{\sqrt{5}}$ and $\frac{1}{\sqrt{5}}$, which has the generic fiber
\[E_{2}':-(-1+5t^2)y^2=x(x-(t^2-1)^2)(x-4t^2).\]
\item[(6)] Let $\mathcal{E}_{3}\rightarrow\mathbb{P}^{1}$ be the elliptic surface which has the generic fiber
\[E_{3}: y^2=x(x-((\frac{2t}{5+t^2})^2-1)^2)(x-4(\frac{2t}{5+t^2})^2).\]
\end{enumerate}
\end{definition}

Proposition $\ref{theorem:ranks}$ implies the following statement.
\begin{corollary}\label{lemma:total_rank}
The following equalities hold
\[\mathop{rank}E_{3}(\overline{\mathbb{Q}}(t))=\mathop{rank}E_{2}(\overline{\mathbb{Q}}(t))+\mathop{rank}E_{2}'(\overline{\mathbb{Q}}(t)),\]
\[\mathop{rank}E_{2}'(\overline{\mathbb{Q}}(t))=\mathop{rank}E_{1}'(\overline{\mathbb{Q}}(t))+\mathop{rank}E_{1}''(\overline{\mathbb{Q}}(t)),\]
\end{corollary}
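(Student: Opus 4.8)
The plan is to apply Proposition \ref{theorem:ranks} twice, each time exhibiting one of $E_{3}$ or $E_{2}'$ as the base change of a lower curve to a quadratic extension of its function field, and recognizing the corresponding quadratic twist as one of the curves already introduced.

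For the first identity, I would regard $E_{2}$ as the generic curve $y^2=x(x-(v^2-1)^2)(x-4v^2)$ over $\overline{\mathbb{Q}}(v)$ and observe that $E_{3}$ is obtained from it by the substitution $v=\frac{2t}{5+t^2}$. This rational function has degree $2$ in $t$, so it presents $\overline{\mathbb{Q}}(t)$ as a quadratic extension of $\overline{\mathbb{Q}}(v)$; solving $v(5+t^2)=2t$ gives $t=\frac{1\pm\sqrt{1-5v^2}}{v}$, hence $\overline{\mathbb{Q}}(t)=\overline{\mathbb{Q}}(v)(\sqrt{1-5v^2})$. Taking $u=1-5v^2$ in Proposition \ref{theorem:ranks}, the twist $E_{2}^{(u)}$ has defining equation $(1-5v^2)y^2=x(x-(v^2-1)^2)(x-4v^2)$, which is exactly $E_{2}'$, and the base change of $E_{2}$ to $\overline{\mathbb{Q}}(v)(\sqrt{u})$ is literally $E_{3}$ over $\overline{\mathbb{Q}}(t)$. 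Proposition \ref{theorem:ranks} then reads $\mathop{rank}E_{2}(\overline{\mathbb{Q}}(v))+\mathop{rank}E_{2}'(\overline{\mathbb{Q}}(v))=\mathop{rank}E_{3}(\overline{\mathbb{Q}}(t))$, and renaming the variable $v$ back to $t$ (which leaves the ranks unchanged) gives the first equality.

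For the second identity I would run the same argument one level down, with $E_{1}'$ over $\overline{\mathbb{Q}}(s)$ and the double cover $s=t^2$, which is ramified exactly at $s=0$ and $s=\infty$ and satisfies $\overline{\mathbb{Q}}(t)=\overline{\mathbb{Q}}(s)(\sqrt{s})$. Pulling the equation $-(-1+5s)y^2=x(x-(s-1)^2)(x-4s)$ of $E_{1}'$ back along $s=t^2$ produces $-(-1+5t^2)y^2=x(x-(t^2-1)^2)(x-4t^2)$, which is precisely $E_{2}'$; thus $E_{2}'$ is the base change of $E_{1}'$ to $\overline{\mathbb{Q}}(s)(\sqrt{s})$. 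Applying Proposition \ref{theorem:ranks} with $E=E_{1}'$ and $u=s$, and noting that the twist $(E_{1}')^{(s)}$ is by construction $E_{1}''$, I obtain $\mathop{rank}E_{1}'(\overline{\mathbb{Q}}(s))+\mathop{rank}E_{1}''(\overline{\mathbb{Q}}(s))=\mathop{rank}E_{2}'(\overline{\mathbb{Q}}(t))$, which is the second equality after the analogous renaming of the variable.

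The substitutions themselves are routine; the step requiring care is the bookkeeping of the quadratic extensions and the twisting elements. Concretely, I must check that the ramification loci $\{0,\infty\}$ and $\{\pm\frac{1}{\sqrt{5}}\}$ of the two double covers coincide with the pairs of points used to define $E_{1}''$ and $E_{2}'$, and that the function realizing each quadratic extension ($\sqrt{s}$, respectively $\sqrt{1-5v^2}$) is the very element $u$ appearing as the twisting factor, so that the base-changed curve is genuinely $E_{3}$ (respectively $E_{2}'$) and the twist is genuinely $E_{2}'$ (respectively $E_{1}''$). This amounts to verifying the valuation conditions of Definition \ref{definition:twist} for $u=1-5v^2$ and $u=s$, together with tracking square classes in $\overline{\mathbb{Q}}(t)^{\times}$; it is the only place where a sign or square-factor slip could occur.
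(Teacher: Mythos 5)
Your proof is correct and follows the paper's approach: the paper's own justification is a one-line appeal to Proposition \ref{theorem:ranks}, and you have simply supplied the (correct) bookkeeping it leaves implicit — identifying $\overline{\mathbb{Q}}(t)$ as the quadratic extensions $\overline{\mathbb{Q}}(v)(\sqrt{1-5v^2})$ and $\overline{\mathbb{Q}}(s)(\sqrt{s})$, and matching the twisting functions with the defining equations of $E_{2}'$ and $E_{1}''$ and with the ramification points $\{\pm 1/\sqrt{5}\}$ and $\{0,\infty\}$ of Definition \ref{definition:twist}. No gaps.
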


\begin{lemma}\label{lemma:rank_e1_prime}
The rank of $E_{1}'(\overline{\mathbb{Q}}(t))$ is equal to $0$.
\end{lemma}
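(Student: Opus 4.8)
```latex
The plan is to prove $\mathop{rank}E_{1}'(\overline{\mathbb{Q}}(t))=0$ by constructing the Kodaira--N\'{e}ron model $\mathcal{E}_{1}'\to\mathbb{P}^{1}$ explicitly, determining its singular fibers via the Tate algorithm, and then applying the Shioda--Tate formula together with the point-counting machinery recalled in Section \ref{section:elliptic_surf}. First I would put the curve $E_{1}':-(-1+5t)y^2=x(x-(t-1)^2)(x-4t)$ into a global minimal Weierstrass form over $\mathbb{Q}(t)$, clearing the twisting factor by the substitution $x\mapsto (5t-1)x$, $y\mapsto (5t-1)^2 y$ (up to sign), so as to read off the discriminant and $j$-invariant as rational functions of $t$. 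Comparing with the untwisted surface $\mathcal{E}_{1}$, whose fibers appear in Table \ref{table:singular_E1}, the quadratic twist by the factor $-(-1+5t)$ will change the reduction type precisely at $t=\tfrac15$ and at $t=\infty$ (the two twisting points) and leave the other places multiplicative; I would compute the resulting Kodaira types carefully, paying attention to whether the twist turns an $I_{n}$ fiber into an $I_{n}^{*}$ fiber and whether new additive fibers appear at the twisting points.

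The second step is to compute the Euler number $e(\mathcal{E}_{1}')=\sum_{v}e(F_{v})$ from the fiber table, using the rule (from the proof of Lemma \ref{lemma:elliptic_types}) that $e(F_{v})=m_{v}$ for multiplicative and $m_{v}+1$ for additive fibers, and thereby locate the surface in the Kodaira--Enriques classification. I expect the twist to produce a rational elliptic surface with $e(\mathcal{E}_{1}')=12$ and hence $\rho(\mathcal{E}_{1}')=10$, so that the Shioda--Tate formula
\[
\mathop{rank}E_{1}'(\overline{\mathbb{Q}}(t))=\rho(\mathcal{E}_{1}')-2-\sum_{v\in R}(m_{v}-1)
\]
forces the rank to vanish, provided the sum of the local corrections $\sum_{v}(m_{v}-1)$ equals $8$. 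The torsion contribution is then controlled by Lemma \ref{lemma:torsion_types}, which bounds the torsion injectively inside $\prod_{v}G(F_{v})$.

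The main obstacle is getting the singular-fiber analysis of the twisted surface exactly right. A quadratic twist by a function with simple zeros at $t=\tfrac15,\infty$ generically converts the fibers there to additive type, and the precise Tate type ($I_{n}^{*}$ versus $II,III,IV$, etc.) depends delicately on the order of vanishing of the minimal discriminant after twisting; a miscount of even one component propagates directly into an error in $\sum(m_{v}-1)$ and hence in the rank. If the naive Shioda--Tate count leaves an ambiguity of $\pm 1$ in the rank --- or if $\mathcal{E}_{1}'$ turns out to be non-rational so that $\rho\le 10$ is only an inequality --- then I would fall back on the Frobenius machinery: reduce a smooth integral model $S\to\mathop{Spec}A$ modulo a good prime $q$, count $\#S(\mathbb{F}_{q^{m}})$ for small $m$, extract $\mathop{Tr}((\Phi^{*})^{m}\mid H^{2})$ from the Lefschetz formula in Theorem \ref{theorem:Lefschetz}, reconstruct the characteristic polynomial via the trace expansion (\ref{equation:coefficients_traces}), and apply Theorem \ref{theorem:Luijk_roots} to bound $\rho(\tilde S)$ from above by the number of eigenvalues $\lambda$ with $\lambda/q$ a root of unity. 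Combined with Theorem \ref{theorem:Luijk_injective}, this would pin $\rho(\mathcal{E}_{1}')$ from above and, via Shioda--Tate, certify that $\mathop{rank}E_{1}'(\overline{\mathbb{Q}}(t))=0$.
```
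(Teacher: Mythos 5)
Your primary route fails at the very first prediction: $\mathcal{E}_{1}'$ is not a rational elliptic surface. The twisting function $-(5t-1)$ has odd order exactly at $t=\tfrac15$ and $t=\infty$; the first is a place of good reduction for $\mathcal{E}_{1}$ and acquires an $I_{0}^{*}$ fiber ($e=6$), while the $I_{2}$ fiber at $\infty$ is promoted to $I_{2}^{*}$ ($e=8$). Together with the unchanged multiplicative fibers $I_{4}$ at $t=1$ and $I_{2}$ at $t=0$ and at $t=3\pm2\sqrt{2}$, this gives $e(\mathcal{E}_{1}')=4+2+2+2+6+8=24$, so the surface is K3, not rational. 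The trivial lattice (zero section plus fiber components) has rank $2+\sum_{v}(m_{v}-1)=2+(3+1+1+1+4+6)=18$, and for a K3 surface the only a priori bound is $\rho\le 20$; Shioda--Tate therefore yields $\mathop{rank}E_{1}'(\overline{\mathbb{Q}}(t))\le 2$ and cannot by itself force the rank to vanish. So the step you hoped would close the argument --- ``$\rho=10$, local corrections sum to $8$, rank is $0$'' --- does not exist.

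What you describe as a contingency is in fact the entire proof, and it matches the paper's argument exactly: one takes the integral model over $\mathbb{Z}_{(17)}$ (a prime of good reduction), counts $\#\tilde S(\mathbb{F}_{17^{m}})$ for $m\le 4$, extracts the traces of Frobenius on $H^{2}_{\textrm{\'et}}/V$ via the Lefschetz formula, reconstructs the degree-$4$ factor $x^{4}-8x^{3}+238x^{2}-2312x+83521$ of the characteristic polynomial, and checks (by an irreducibility/integrality argument) that none of its roots has the form $17\zeta$ with $\zeta$ a root of unity. Theorems \ref{theorem:Luijk_roots} and \ref{theorem:Luijk_injective} then give $\rho(\mathcal{E}_{1}')=18$, and Shioda--Tate gives rank $0$. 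Your proposal names all of these ingredients correctly, but relegates them to a fallback and carries out none of the computation; to make this a proof you must commit to the K3 case from the outset and actually produce the point counts and the characteristic polynomial.
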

\begin{proof}
First we perform the Tate algorithm to compute the types of singular fibers on our elliptic surface $\mathcal{E}_{1}^{'}\rightarrow\mathbb{P}^{1}$ associated with the curve $E_{1}^{'}$. A computation in MAGMA reveals that we have one fiber over the point $t=1$ of multiplicative type $I_{4}$, split over $\mathbb{Q}$. One singular fiber lies above $t=0$ and is non-split multiplicative of type $I_{2}$, nonetheless the equations are defined over $\mathbb{Q}$. We have a fiber over $t=\frac{1}{5}$, additive of type $I_{0}^{*}$ and again by the Tate algorithm and MAGMA the defining equations of the fiber have coefficents in $\mathbb{Q}$. The singular fiber over $t=\infty$ is additive of the type $I_{2}^{*}$ given by equations with coefficients in  $\mathbb{Q}$. Finally, we have two singular fibers of non-split multiplicative type $I_{2}$ above $t=3+2\sqrt{2}$ and $t=3-2\sqrt{2}$. The equations of the fibers are defined over $\mathbb{Q}(\sqrt{2})$ by the Tate algorithm. However, the surface $\mathcal{E}_{1}^{'}$ is defined over $\mathbb{Q}$, since we have started with the Weierstrass equation of the elliptic curve $E_{1}^{'}$ defined over $\mathbb{Q}$ and the singular locus defines an ideal where the generators have $\mathbb{Q}$-coefficients. In fact, $\mathcal{E}_{1}^{'}$ is defined over $\mathbb{Z}$. We check that the elliptic surface associated with $E_{1}^{'}$ over $\mathbb{F}_{17}$ has the types of singular fibers above the reductions of points $t=1,0,\infty, 3\pm 2\sqrt{2}$ the same as in characteristic zero. Put $\mathfrak{p}=(17)\in\mathop{Spec}\mathbb{Z}$ and $A=\mathbb{Z}_{(\mathfrak{p})}$. The surface $\mathcal{E}_{1}^{'}$ defines an integral scheme $S\rightarrow\mathop{Spec}A$ that is projective and smooth of relative dimension $2$. The smoothness comes from the fact that we have a good reduction at $17$. The residue field $k=A/\mathfrak{p}$ is equal to $\mathbb{F}_{17}$. Hence, the special fiber of $S\rightarrow\mathop{Spec}A$ is a surface defined over $\mathbb{F}_{17}$. It determines an elliptic surface $\tilde{S}=S_{\overline{\mathbb{F}_{17}}}\rightarrow\mathbb{P}^{1}$ which is the reduction of our elliptic surface $\mathcal{E}_{1}^{'}\rightarrow\mathbb{P}^{1}$. By Theorem \ref{theorem:Luijk_injective} we know that the rank of the N\'{e}ron-Severi group of $\mathcal{E}_{1}^{'}$ is bounded from above by the rank of the N\'{e}ron-Severi group of $\tilde{S}$. Components of the singular fibers and the zero section generate a rank $18$ subgroup in $NS(\tilde{S})$. The Euler-Poincar\'{e} characteristic $e(\mathcal{E}_{1}^{'})$ equals $24$ as follows by an argument based on the proof of Lemma \ref{lemma:elliptic_types}. Hence, the surface $\mathcal{E}_{1}^{'}$ is $K3$. Good reduction at prime $17$ implies that also $\tilde{S}$ is a $K3$ surface, so the subspace $\mathop{NS}(\tilde{S})\otimes\mathbb{Q}_{\ell}\hookrightarrow H^{2}_{\textrm{\'{e}t}}(\tilde{S},\mathbb{Q}_{\ell})(1)$ is at most of dimension $22$, because $\dim_{\mathbb{Q}_{\ell}}H^{2}_{\textrm{\'{e}t}}(\tilde{S},\mathbb{Q}_{\ell})(1)=\dim_{\mathbb{Q}_{\ell}}H^{2}_{\textrm{\'{e}t}}(\tilde{S},\mathbb{Q}_{\ell})=22$ by \cite[Theorem 4, Part III]{Mumford_Selected_papers}.
On the subspace $V$ generated by components of singular fibers and by zero section the Frobenius automorphism $\Phi_{\tilde{S}}^{*}$ acts by multiplication by $17$. It follows from the analysis of the singular fibers, i.e. by the Tate algorithm. The characteristic polynomial of the Frobenius automorphism $\Phi_{\tilde{S}}^{*}$ splits as follows
\[\mathop{char}(\Phi_{\tilde{S}}^{*})=\mathop{char}(\Phi_{\tilde{S}}^{*}\mid V)\cdot \mathop{char}(\Phi_{\tilde{S},H^{2}_{\textrm{\'{e}t}}/V}^{*}).\]
Then $\mathop{char}(\Phi_{\tilde{S}}^{*}\mid V)=\det(Id\cdot x-\Phi_{\tilde{S}}^{*}\mid V)=(x-17)^{18}$. For any natural $m$ an equality holds
\[\mathop{Tr}((\Phi_{\tilde{S}}^{*})^m)=\mathop{Tr}((\Phi_{\tilde{S}}^{*}\mid V)^m)+\mathop{Tr}((\Phi_{\tilde{S},H^{2}_{\textrm{\'{e}t}}/V}^{*})^m).\]
But we have $\mathop{Tr}((\Phi_{\tilde{S}}^{*}\mid V)^m)=18\cdot 17^{m}$ and $\mathop{Tr}((\Phi_{\tilde{S},H^{2}_{\textrm{\'{e}t}}/V}^{*})^m)=\#\tilde{S}(\mathbb{F}_{17^{m}})-1-17^{2m}$ by Lefschetz trace formula (cf. Theorem \ref{theorem:Lefschetz}). Combining those facts we obtain
\[\mathop{Tr}((\Phi_{\tilde{S},H^{2}_{\textrm{\'{e}t}}/V}^{*})^m)=\#\tilde{S}(\mathbb{F}_{17^{m}})-1-17^{2m}-18\cdot 17^{m}.\]
The characteristic polynomial $\mathop{char}(\Phi_{\tilde{S},H^{2}_{\textrm{\'{e}t}}/V}^{*})$ is of the form $x^4+c_{1}x^3+c_{2}x^2+c_{3}x+c_{4}$. We present explicit formulas  for $c_{m}$ in terms of $t_{m}=\mathop{Tr}((\Phi_{\tilde{S},H^{2}_{\textrm{\'{e}t}}/V}^{*})^m)$ (cf. equation (\ref{equation:coefficients_traces}))
\begin{align*}
c_{1}&=-t_{1}\\
c_{2}&= \frac{1}{2} (t_{1}^{2} - t_{2})\\
c_{3}&=\frac{1}{6}(-t_{1}^{3} + 3 t_{1} t_{2} - 2 t_{3})\\
c_{4}&=\frac{1}{24}(t_{1}^4 - 6 t_{1}^2 t_{2} + 3 t_{2}^2 + 8 t_{1} t_{3} - 6 t_{4})\\
\end{align*} 
We compute the number of $\mathbb{F}_{17^{m}}$-rational points on $\tilde{S}$ up to $m=4$.
\[\begin{array}{|c|c|c|c|c|}
\hline
m & 1 & 2 & 3 & 4\\
\hline
\#\tilde{S}(\mathbb{F}_{17^m}) & 604 & 88312 & 24227740 & 6977057176 \\
\hline
\end{array}\]
We obtain the characteristic polynomial $\mathop{char}(\Phi_{\tilde{S},H^{2}_{\textrm{\'{e}t}}/V}^{*})=x^4 - 8 x^3+ 238 x^2 - 2312 x +83521$. Suppose a root of this polynomial is $x=17\zeta$ for some root of unity $\zeta$. Then 
\[4913 (17 \zeta^4 - 8 \zeta^3 + 14 \zeta^2- 8 \zeta +17)=0.\]
But $\zeta$ is an algebraic integer and the polynomial  $17 x^4 - 8 x^3 + 14 x^2- 8 x +17$ is irreducible over $\mathbb{Q}$, hence its roots are not algebraic integers, which leads to a contradiction. Hence, the characteristic polynomial
$\mathop{char}(\Phi_{\tilde{S}}^{*})=(x-17)^{18} (x^4 - 8 x^3+ 238 x^2 - 2312 x +83521)$ has only 18 roots of the shape $17$ times a root of unity. By Theorem \ref{theorem:Luijk_roots} the rank of $\mathop{NS}(\tilde{S})$ is at most $18$. Then by Theorem \ref{theorem:Luijk_injective} the rank of $\mathop{NS}(\mathcal{E}_{1}^{'})$ is equal to $18$ since we have an explicit rank $18$ subgroup generated by singular fibers components and the zero section. By the Shioda-Tate formula the rank of $E_{1}^{'}(\overline{\mathbb{Q}}(t))$ equals zero. 
\end{proof}

\section{Computing ranks by reductions}
Let $p$ be a prime of good reduction for an elliptic surface $\mathcal{E}\rightarrow C$ defined over a number field $K$. Let $S$ be an integral model of $\mathcal{E}$ over $\mathcal{O}_{K}$ with special fiber defined over $\mathbb{F}_{p^{r}}$. We know by Theorem \ref{theorem:Luijk_injective} that 
\[NS(S_{\overline{\mathbb{Q}}})\otimes\mathbb{Q}\hookrightarrow NS(S_{\overline{\mathbb{F}}_{p^{r}}})\otimes\mathbb{Q}.\]
Assume for a moment that the map is an isomorphism. Then by classical results in lattice theory it follows that the determinants of the Gram matrices of the intersection pairings on $NS(S_{\overline{\mathbb{Q}}})$ and $NS(S_{\overline{\mathbb{F}}_{p^{r}}})$ differ by a square. In the sequel, we denote the determinant of the Gram matrix of a lattice $\Lambda$ by $\Delta(\Lambda)$. 

We will compute discriminants modulo squares using the Tate conjecture and the Artin-Tate conjecture for K3 surfaces which we recall for the reader's convenience.
\begin{theorem}\label{theorem:Tate_conjectures}
Let $Y$ be a K3 surface over $\mathbb{F}_{q}$. Let $\Phi_{Y}^{*}$ be the Frobenius automorphism acting on the cohomology group $H^{2}(Y,\mathbb{Q}_{l}))$, $l\nmid q$. The number of roots of the characteristic polynomial of $\Phi_{Y}^{*}$ of the form $q\zeta$, where $\zeta$ is a root of unity is equal to the Picard number $\rho(Y)=\mathop{rank}NS(Y_{\overline{\mathbb{F}}_{q}})$.
\end{theorem}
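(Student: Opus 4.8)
The plan is to derive the statement from the Tate conjecture for K3 surfaces over finite fields, together with the semisimplicity of the Frobenius action on the second cohomology; in the characteristics relevant to our applications (the residue fields occurring below have characteristic different from $2$ and $3$, and in fact large) these inputs are available through the work of Nygaard and Ogus.

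First I would invoke the cycle class map, which for $l\nmid q$ furnishes an injective homomorphism
\[NS(Y_{\overline{\mathbb{F}}_{q}})\otimes\mathbb{Q}_{l}\hookrightarrow H^{2}(Y_{\overline{\mathbb{F}}_{q}},\mathbb{Q}_{l})(1)\]
compatible with the action of $G(\overline{\mathbb{F}}_{q}/\mathbb{F}_{q})$, and hence with $\Phi_{Y}^{*}$. Every class in $NS(Y_{\overline{\mathbb{F}}_{q}})$ is represented by a divisor defined over some finite extension $\mathbb{F}_{q^{n}}$, so it is fixed by $(\Phi_{Y}^{*})^{n}$; consequently the image lies in the subspace
\[\bigcup_{n\geq 1}H^{2}(Y_{\overline{\mathbb{F}}_{q}},\mathbb{Q}_{l})(1)^{(\Phi_{Y}^{*})^{n}}\]
of classes fixed by some power of Frobenius.

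The second step translates this subspace into a condition on eigenvalues. An eigenvalue of $\Phi_{Y}^{*}$ on the Tate twist $H^{2}(1)$ equals $\lambda/q$, where $\lambda$ ranges over the eigenvalues on $H^{2}$. A generalized eigenvector is fixed by a power of Frobenius exactly when $\lambda/q$ is a root of unity, that is, when $\lambda=q\zeta$; by purity every such $\lambda$ already satisfies $|\lambda|=q$. Since Frobenius acts semisimply on the $H^{2}$ of a K3 surface, each generalized eigenspace is a genuine eigenspace, so the dimension of the root-of-unity subspace equals the number of roots of $\mathop{char}(\Phi_{Y}^{*})$ of the form $q\zeta$, counted with multiplicity. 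This semisimplicity is essential: without it a Jordan block of eigenvalue $q\zeta$ would inflate the root count beyond the dimension of the fixed subspace.

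The decisive and final step is the Tate conjecture, applied over every finite extension $\mathbb{F}_{q^{n}}$: it identifies $NS(Y_{\mathbb{F}_{q^{n}}})\otimes\mathbb{Q}_{l}$ with $H^{2}(1)^{(\Phi_{Y}^{*})^{n}}$, and passing to the union over $n$ shows that the injection of the first step is an isomorphism onto the subspace fixed by some power of Frobenius. Combining the three steps gives
\[\mathop{rank}NS(Y_{\overline{\mathbb{F}}_{q}})=\dim_{\mathbb{Q}_{l}}\bigcup_{n\geq 1}H^{2}(1)^{(\Phi_{Y}^{*})^{n}}=\#\{\lambda=q\zeta:\zeta\text{ a root of unity}\},\]
which is the claim. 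The main obstacle is precisely the Tate conjecture for K3 surfaces: its proof is the deep input (and in small characteristic it relies on the more recent results of Madapusi Pera, Maulik, Charles and others), whereas the remaining ingredients, namely injectivity of the cycle class map and semisimplicity of Frobenius on $H^{2}$, are comparatively standard.
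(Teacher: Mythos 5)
Your argument is correct, and it rests on exactly the same deep input as the paper: the Tate conjecture for K3 surfaces over finite fields. The difference is that the paper offers no derivation at all --- immediately after stating the result and the Artin--Tate formula it simply records that the Tate conjecture for \emph{elliptic} K3 surfaces was proven by Artin and Swinnerton-Dyer, and leaves implicit the passage from ``the cycle class map surjects onto the Galois-invariants of $H^{2}(1)$ over every finite extension'' to ``the Picard number of $Y_{\overline{\mathbb{F}}_{q}}$ equals the number of roots of $\mathop{char}(\Phi_{Y}^{*})$ of the form $q\zeta$.'' Your three-step reduction (injectivity of the cycle class map, semisimplicity of Frobenius on $H^{2}$, Tate over every $\mathbb{F}_{q^{n}}$ followed by taking the directed union) is the standard way to make that passage precise, and your observation that semisimplicity is genuinely needed --- since the union of fixed subspaces of powers of Frobenius only sees one line per Jordan block, whereas the root count is taken with multiplicity --- is a real point that the paper glosses over; for K3 surfaces this semisimplicity is a theorem (via Kuga--Satake and Tate's result for abelian varieties) rather than something formal, so it deserves the explicit mention you give it. The one place where your sourcing diverges from the paper's: every surface to which this theorem is applied in the paper is an elliptic K3 surface, so the 1973 theorem of Artin and Swinnerton-Dyer already supplies the Tate conjecture in all cases actually used, and one need not appeal to Nygaard--Ogus or to the recent general proofs (Charles, Maulik, Madapusi Pera) --- those are only required if one insists on the literal statement for an arbitrary K3 surface over $\mathbb{F}_{q}$.
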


\begin{theorem}\label{theorem:Artin_Tate}
Let $Y$ be a K3 surface over $\mathbb{F}_{q}$. Let $\Phi_{Y}^{*}$ be the Frobenius automorphism and $P(T)=\det(1-T\Phi_{Y}^{*}|H^{2}(Y,\mathbb{Q}_{l}))$. Then
\[\lim_{s\rightarrow 1}\frac{P(q^{-s})}{(1-q^{1-s})^{\rho '(Y)}}=\frac{(-1)^{\rho '(Y)-1}\sharp Br(Y)\Delta(NS(Y_{\mathbb{F}_{q}}))}{q^{\alpha(Y)}(\sharp NS(Y_{\mathbb{F}_{q}})_{\textrm{tor}})^2},\]
where $\alpha(Y)=\chi(Y,\mathcal{O}_{Y})-1+\dim Pic^{0}(Y)$ and $Br(Y)$ is the Brauer group of $Y$. Moreover $\rho '(Y)=\mathop{rank} NS(Y_{\mathbb{F}_{q}})$. The group $NS(Y_{\mathbb{F}_{q}})$ is the subgroup of the N\'{e}ron-Severi group $NS(Y_{\overline{\mathbb{F}}_{q}})$ generated by $\mathbb{F}_{q}$-rational divisors. 
\end{theorem}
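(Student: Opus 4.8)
The plan is to deduce this formula from the Tate conjecture together with the finiteness of the Brauer group, both of which are now theorems for $K3$ surfaces over finite fields in every characteristic (by the combined work of Nygaard--Ogus, Maulik, Charles, Madapusi Pera and others). I would follow the strategy of Tate's Bourbaki seminar and of Milne's paper on the conjecture of Artin and Tate, proving the identity one prime at a time by comparing the $\ell$-adic valuation of each side. Write $P(T)=\det(1-T\Phi_{Y}^{*}\mid H^{2}(Y,\mathbb{Q}_{l}))$; the first step is to isolate the trivial zeros. Using the Weil bounds ($|\alpha_{i}|=q$ for the $22$ eigenvalues) and the Tate conjecture, one factors $P(T)=(1-qT)^{\rho'(Y)}R(T)$ with $R(q^{-1})\neq 0$. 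Since $1-q\cdot q^{-s}=1-q^{1-s}$, the ratio $P(q^{-s})/(1-q^{1-s})^{\rho'(Y)}$ equals $R(q^{-s})$, so the limit on the left collapses to $R(q^{-1})$, and the entire problem reduces to identifying this number with the arithmetic quantity on the right. The hypothesis that the zero has order \emph{exactly} $\rho'(Y)$, rather than merely at least $\rho'(Y)$, is precisely the content of the Tate conjecture and is what guarantees $R(q^{-1})\neq 0$.

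Next, for each prime $\ell\neq p$, I would read off the $\ell$-adic valuation of $R(q^{-1})$ from the Kummer sequence of Galois modules
\[
0\to NS(Y_{\overline{\mathbb{F}}_{q}})\otimes\mathbb{Z}_{\ell}\to H^{2}_{\textrm{\'{e}t}}(Y_{\overline{\mathbb{F}}_{q}},\mathbb{Z}_{\ell}(1))\to T_{\ell}Br(Y)\to 0,
\]
where $NS=\textrm{Pic}$ because $\dim\textrm{Pic}^{0}(Y)=0$ for a $K3$ surface. Passing to Frobenius invariants and coinvariants, and using that $\Phi_{Y}^{*}$ acts semisimply at the eigenvalue $1$ on the Tate twist (again a consequence of the Tate conjecture for $K3$ surfaces), the determinant of $1-\Phi_{Y}^{*}$ on the orthogonal complement of the algebraic classes records two contributions: the discriminant $\Delta(NS(Y_{\mathbb{F}_{q}}))$ through the intersection pairing and the Galois descent from $\overline{\mathbb{F}}_{q}$ to $\mathbb{F}_{q}$ (this descent is what produces the factor $(\# NS(Y_{\mathbb{F}_{q}})_{\textrm{tor}})^{2}$ in the denominator), and the $\ell$-part of $\# Br(Y)$ through $T_{\ell}Br(Y)$. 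Nondegeneracy of the cup-product pairing on $H^{2}$ ensures these determinants are $\ell$-adic units away from the primes dividing the discriminant, so the matching of valuations is clean for all $\ell\neq p$.

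The remaining input is the prime $p$, and this is where the main difficulty lies. The étale argument breaks down and must be replaced by crystalline cohomology $H^{2}_{\textrm{cris}}(Y/W)$: one runs the analogous determinant computation for the Frobenius $F$ on the $F$-crystal, using the slope decomposition and the unit-root subspace, and matches the $p$-adic valuation of $R(q^{-1})$ with the $p$-parts of $\Delta(NS)$ and $\# Br(Y)$. Proving this $p$-part — the crystalline analogue of the Kummer-sequence computation, together with the flat-duality and semisimplicity statements it requires — is the technical crux and was historically the last piece to be settled. Finally I would recover the global sign $(-1)^{\rho'(Y)-1}$ and the normalizing power $q^{\alpha(Y)}$, with $\alpha(Y)=1$ here because $\chi(Y,\mathcal{O}_{Y})=2$ and $\dim\textrm{Pic}^{0}(Y)=0$, from the functional equation of $P(T)$ under $T\mapsto q^{-2}T^{-1}$, which pairs each eigenvalue $q\zeta$ with $q\zeta^{-1}$ and pins down the parity of the number of real eigenvalues.
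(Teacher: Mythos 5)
The paper offers no proof of this theorem at all: it is quoted as a known result, justified immediately afterwards by citing Artin--Swinnerton-Dyer for the Tate conjecture for elliptic K3 surfaces and Milne's two papers showing that the Tate conjecture implies the Artin--Tate formula (first away from characteristic $2$, then unconditionally). Your proposal is a faithful sketch of precisely that chain of implications --- Tate plus finiteness of $Br$, the Kummer-sequence valuation count at $\ell\neq p$, the crystalline argument at $p$, and the functional equation for the sign and for $\alpha(Y)=1$ --- so it follows essentially the same route as the paper, merely unpacking the content of the references the paper cites.
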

Tate conjectures for elliptic $K3$ surfaces are proven in \cite[Theorem 5.2]{Artin_SD_K3_Tate_Conjectures}. J. S. Milne proved that the Tate conjectures imply the Artin-Tate conjectures for characteristic different from $2$, cf. \cite[Theorem 6.1]{Milne_Tate_Conjecture}. Finally, in \cite[Theorem 0.4b]{Milne_Tate_Conjecture_2} the assumption on the characteristic was dropped.

\begin{proposition}[Proposition 4.7,\cite{Kloosterman_rank_15}]
Suppose $q$ is a prime power. Let $Y\rightarrow\mathbb{P}^{1}$ be an elliptic K3 surface, defined over $\mathbb{F}_{q}$. Assume that $q$ is a square and that $\rho(Y)=\rho '(Y)$. Then
\[\Delta(NS(Y_{\overline{\mathbb{F}}_{q}}))\equiv -\lim_{s\rightarrow 1}\frac{P(q^{-s})}{(1-q^{1-s})^{\rho(Y)}}\textrm{ mod }(\mathbb{Q}^{*})^{2}.\]
\end{proposition}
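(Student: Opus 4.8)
The plan is to feed the geometry of a K3 surface into the Artin--Tate formula of Theorem~\ref{theorem:Artin_Tate} and then discard every factor that is a square in $\mathbb{Q}^{*}$. First I would record the relevant invariants. Since $h^{1}(Y,\mathcal{O}_{Y})=0$ and $h^{2}(Y,\mathcal{O}_{Y})=1$ for a K3 surface, one has $\chi(Y,\mathcal{O}_{Y})=2$ and $\dim\mathrm{Pic}^{0}(Y)=0$, whence $\alpha(Y)=\chi(Y,\mathcal{O}_{Y})-1+\dim\mathrm{Pic}^{0}(Y)=1$. The group $NS(Y)$ is torsion-free, so $\sharp NS(Y_{\mathbb{F}_{q}})_{\mathrm{tor}}=1$. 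Using the hypothesis $\rho(Y)=\rho'(Y)$ to match the exponent, Theorem~\ref{theorem:Artin_Tate} becomes
\[\lim_{s\rightarrow 1}\frac{P(q^{-s})}{(1-q^{1-s})^{\rho(Y)}}=\frac{(-1)^{\rho(Y)-1}\,\sharp Br(Y)\,\Delta(NS(Y_{\mathbb{F}_{q}}))}{q}.\]

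Next I would reduce this identity modulo $(\mathbb{Q}^{*})^{2}$. Three factors disappear. The order $\sharp Br(Y)$ is a perfect square: the Tate conjecture (Theorem~\ref{theorem:Tate_conjectures}) makes $Br(Y)$ finite, and a finite Brauer group of a smooth projective surface over a finite field carries a nondegenerate alternating pairing, so its order is a square. The denominator $q$ is a square by hypothesis. Finally, the natural inclusion $NS(Y_{\mathbb{F}_{q}})\hookrightarrow NS(Y_{\overline{\mathbb{F}}_{q}})$ has finite index, because both lattices have rank $\rho(Y)=\rho'(Y)$; by the transformation law for Gram determinants under a finite-index sublattice, $\Delta(NS(Y_{\mathbb{F}_{q}}))$ and $\Delta(NS(Y_{\overline{\mathbb{F}}_{q}}))$ differ by the square of that index. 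Hence, modulo squares,
\[\lim_{s\rightarrow 1}\frac{P(q^{-s})}{(1-q^{1-s})^{\rho(Y)}}\equiv(-1)^{\rho(Y)-1}\,\Delta(NS(Y_{\overline{\mathbb{F}}_{q}}))\pmod{(\mathbb{Q}^{*})^{2}}.\]

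It remains only to pin down the sign, and for this the key point is that $\rho(Y)$ is even. I would argue via Poincar\'e duality: the cup product on $H^{2}(Y,\mathbb{Q}_{l})$ into $H^{4}(Y,\mathbb{Q}_{l})\cong\mathbb{Q}_{l}(-2)$ sends an eigenvalue $\alpha$ of $\Phi_{Y}^{*}$ to $q^{2}/\alpha$, so by the Weil estimate $|\alpha|=q$ the roots of the characteristic polynomial are stable under complex conjugation $\alpha\mapsto\overline{\alpha}$. A root is real precisely when $\alpha=\pm q$, and then $\alpha/q=\pm 1$ is a root of unity, so every transcendental root (one with $\alpha/q$ not a root of unity) is non-real and occurs in a conjugate pair; the transcendental part therefore has even degree. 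Since $\dim H^{2}(Y,\mathbb{Q}_{l})=22$ is even, the algebraic part, whose degree equals $\rho(Y)$ by the Tate conjecture, is even as well. Consequently $(-1)^{\rho(Y)-1}=-1$, and the displayed congruence reads $\lim\equiv-\Delta(NS(Y_{\overline{\mathbb{F}}_{q}}))$, which is exactly the asserted statement after multiplying by $-1$.

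I expect the algebra to be routine and the two external inputs to be the substantive points: the square-ness of $\sharp Br(Y)$ and the parity of $\rho(Y)$. Both are standard for K3 surfaces over finite fields once the Tate conjecture is granted (which holds here for elliptic K3 surfaces), so the proposition reduces to a careful bookkeeping of squares and signs inside the Artin--Tate formula.
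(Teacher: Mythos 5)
Your argument is correct and is essentially the proof given in the cited source (the paper itself does not reprove this proposition but imports it from Kloosterman): specialize the Artin--Tate formula using $\alpha(Y)=1$, torsion-freeness of $NS$, squareness of $q$ and of $\sharp Br(Y)$, the finite-index comparison of $NS(Y_{\mathbb{F}_q})$ with $NS(Y_{\overline{\mathbb{F}}_q})$, and the evenness of $\rho(Y)$ coming from the conjugation-stability of the non-$q\zeta$ eigenvalues together with $\dim H^2=22$. The only point to flag is that the squareness of $\sharp Br(Y)$ is not a formal consequence of a ``nondegenerate alternating pairing'' --- Tate's pairing is a priori only skew-symmetric, and the $2$-primary part is exactly the delicate issue --- so this step should cite the theorem of Liu--Lorenzini--Raynaud rather than be asserted.
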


\begin{lemma}\label{lemma:rank_e1_bis}
The rank of $E_{1}''(\overline{\mathbb{Q}}(t))$ is equal to $1$.
\end{lemma}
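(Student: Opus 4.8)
The plan is to adapt the van Luijk--Kloosterman reduction argument already used for Lemma~\ref{lemma:rank_e1_prime}, but to supplement the eigenvalue count with the Artin--Tate discriminant comparison developed in this section, since here the target rank is \emph{odd}. First I would run Tate's algorithm on the surface $\mathcal{E}_1''$ attached to $E_1'':-t(-1+5t)y^2=x(x-(t-1)^2)(x-4t)$. Twisting $\mathcal{E}_1'$ by the points $0$ and $\infty$ interchanges the behaviour of the fibres at $t=0$ and $t=\infty$ and leaves the others unchanged, so I expect an $I_4$ at $t=1$, an $I_2^{*}$ at $t=0$, an $I_0^{*}$ at $t=\tfrac15$, an $I_2$ at $t=\infty$, and two $I_2$'s at the roots of $t^2-6t+1$. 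Summing local Euler numbers gives $e(\mathcal{E}_1'')=4+8+6+2+2+2=24$, so the surface is K3, and the components of the singular fibres together with the zero section span a trivial lattice of rank $2+(3+6+4+1+1+1)=18$. By the Shioda--Tate formula (Theorem~\ref{theorem:Shioda_Tate}) we then have $\rho(\mathcal{E}_1'')=18+\mathop{rank}E_1''(\overline{\mathbb{Q}}(t))$, so it suffices to prove $\rho=19$.

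For the lower bound I would exhibit one explicit section $P$ of $\mathcal{E}_1''\to\mathbb{P}^1$ and check, as in the earlier lemmas, that $2P$ and $4P$ are nonzero; by the injectivity of the torsion specialisation map (Lemma~\ref{lemma:torsion_types}) this forces $P$ to have infinite order, so $\mathop{rank}E_1''(\overline{\mathbb{Q}}(t))\ge 1$ and $\rho(\mathcal{E}_1'')\ge 19$.

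The upper bound is the delicate part. A K3 surface over $\overline{\mathbb{F}}_q$ always has even geometric Picard number, and the inclusion $NS(\overline{S})\otimes\mathbb{Q}_\ell\hookrightarrow NS(\tilde S)\otimes\mathbb{Q}_\ell$ of Theorem~\ref{theorem:Luijk_injective} then forces $\rho(\tilde S)\ge 20$ at every prime of good reduction once $\rho(\overline S)\ge 19$. Consequently a single reduction can only deliver $\rho(\overline S)\le 20$, i.e. $\mathop{rank}E_1''\le 2$, and cannot by itself exclude rank two. To obtain $\rho(\tilde S)=20$ I would choose a prime $p$ of good reduction, produce an integral model $S/A$ exactly as in Lemma~\ref{lemma:rank_e1_prime}, count $\#\tilde S(\mathbb{F}_{p^m})$ for $m=1,\dots,4$, and use the Lefschetz formula (Theorem~\ref{theorem:Lefschetz}) together with $\dim H^1=\dim H^3=0$ and the $18$-dimensional subspace $V$ spanned by fibre components and the zero section (on which $\Phi^{*}$ acts by multiplication by $p$, after checking those classes are rational over $\mathbb{F}_p$) to extract the degree-$4$ characteristic polynomial of $\Phi^{*}$ on $H^2/V$. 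If this quartic splits as a quadratic with roots of the form $p\zeta$ times an irreducible quadratic whose roots are not $p$ times a root of unity (verified by the algebraic-integrality contradiction used in Lemma~\ref{lemma:rank_e1_prime}), then Theorem~\ref{theorem:Luijk_roots} gives $\rho(\tilde S)=20$ and hence $\rho(\overline S)\le 20$.

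Finally, to exclude $\rho(\overline S)=20$ (equivalently $\mathop{rank}E_1''=2$) I would run the Artin--Tate discriminant comparison. If $\rho(\overline S)=20$, then at every good prime with $\rho(\tilde S)=20$ the map $NS(\overline S)\otimes\mathbb{Q}\to NS(\tilde S)\otimes\mathbb{Q}$ is an isomorphism, so $\Delta(NS(\tilde S))$ is, modulo squares, independent of the prime. I would therefore work over $\mathbb{F}_{q}$ with $q=p_i^{2}$ at two good primes $p_1,p_2$ (so $q$ is a square), verify the hypothesis $\rho=\rho'$ there (all $20$ classes are defined over $\mathbb{F}_{p_i^2}$, i.e. the two eigenvalues of the form $q\zeta$ are actually equal to $q$ and $\Phi^{*}$ fixes the $18$ generators of $V$), and then invoke Theorem~\ref{theorem:Artin_Tate} in the form of Kloosterman's Proposition~4.7 of \cite{Kloosterman_rank_15} to compute
\[\Delta\bigl(NS(\tilde S_{p_i})\bigr)\equiv -\lim_{s\to 1}\frac{P_{p_i}(q^{-s})}{(1-q^{1-s})^{20}}\pmod{(\mathbb{Q}^{*})^{2}}\]
from the zeta function. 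The aim is to find $p_1,p_2$ for which these two values lie in different square classes, contradicting the prime-independence above and forcing $\rho(\overline S)\le 19$; combined with the lower bound this gives $\rho(\overline S)=19$ and $\mathop{rank}E_1''(\overline{\mathbb{Q}}(t))=1$. The main obstacle is precisely this last step: one must locate two primes at which $\rho(\tilde S)=20$ and the hypothesis $\rho=\rho'$ of Kloosterman's proposition genuinely holds, and for which the point counts produce discriminants in distinct square classes --- a computation that is sensitive to the choice of primes, since an ``accidental'' prime yields congruent discriminants and no information.
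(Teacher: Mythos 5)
Your plan is correct and follows essentially the same route as the paper: an explicit non-torsion section for the lower bound (the paper uses $Q=(1-t,1-t)$), the even-parity obstruction for K3 reductions forcing a two-prime argument, and the Artin--Tate discriminant comparison via Kloosterman's Proposition 4.7 to rule out $\rho=20$. The paper carries out the final computation at the good primes $11$ and $17$, obtaining discriminants $-3\cdot 7$ and $-2\cdot 3\cdot 7$ modulo squares, which settles the step you flagged as the main obstacle.
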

\begin{proof}
It is easy to check that the point $Q=(1-t,1-t)$ lies in $E_{1}''(\overline{\mathbb{Q}}(t))$ and that it is a point of infinite order. The configuration of singular fibers is given in Table \ref{table:singular_E1bis}.
\begin{table}[h]
\begin{tabular}{ccc}
place & Type of singular fiber & Automorphism group \\
$t=1$ & $I_{4}$ & $\mathbb{Z}/4\mathbb{Z}$\\
$t=\infty$ & $I_{2}$ & $\mathbb{Z}/2\mathbb{Z}$\\
$t=0$ & $I_{2}^{*}$ & $(\mathbb{Z}/2\mathbb{Z})^2$\\
$t=\frac{1}{5}$ & $I_{0}^{*}$ & $(\mathbb{Z}/2\mathbb{Z})^2$\\
$t=3+\sqrt{2}$ & $I_{2}$ & $(\mathbb{Z}/2\mathbb{Z})$\\
$t=3-\sqrt{2}$ & $I_{2}$ & $(\mathbb{Z}/2\mathbb{Z})$\\
\end{tabular}
\caption{Singular fibers, $E_{1}'':-t(-1+5t)y^2=x(x-(t-1)^2)(x-4t)$ }\label{table:singular_E1bis}
\end{table}
It follows by Lemma \ref{lemma:torsion_types} that it is enough to check that $2Q$ and $4Q$ are non-zero. The Euler characteristic $e(\mathcal{E}_{1}'')=24$, which shows that $\mathcal{E}_{1}''$ is a $K3$ surface (cf. Table \ref{table:singular_E1bis}). Surface $\mathcal{E}_{1}^{''}$ is defined over $\mathbb{Z}$ (cf. proof of Lemma \ref{lemma:rank_e1_prime}). We have two primes of good reduction $11$ and $17$. Consider the reduction of $\mathcal{E}_{1}^{''}$ at $11$, which we denote by $S_{11}$. It is a K3 surface defined over $\mathbb{F}_{11}$. We have also a K3 surface obtained by the reduction at $17$. We denote it by $S_{17}$. Note that it is defined over $\mathbb{F}_{17}$. Since we are interested only in the surfaces defined over $\mathbb{F}_{11^2}$ and $\mathbb{F}_{17^2}$, we will denote by $S_{11}$ and $S_{17}$ the base change of original surfaces to $\mathbb{F}_{11^2}$ and $\mathbb{F}_{17^2}$, respectively.

By an argument similar to the proof of Lemma \ref{lemma:rank_e1_prime} we compute the characteristic polynomials of the  Frobenius automorphism acting on the second $\ell$-adic cohomology group for some auxiliary prime $\ell\neq 11, 17$. 

For $p=11$ using MAGMA we get 
\[\mathop{char}(\Phi_{S_{11}}^{*})=(x - 11^2)^{20} (x^2 - 158 x +14641).\]
Roots of the polynomial $x^2 - 158 x +14641$ are not of the form $11^2\zeta$, for some root of unity $\zeta$. The rank of $NS((S_{11})_{\overline{\mathbb{F}}_{11^2}})$ equals  $20$ by Tate conjectures for K3 surfaces (cf. Theorem \ref{theorem:Tate_conjectures})
For $p=17$ we get
\[\mathop{char}(\Phi_{S_{17}}^{*})=(x-17^2)^{20} ( x^2 + 94 x + 83521).\]
Roots of the polynomial $x^2 + 94 x + 83521$ are not of the form $17^2\zeta$, for some root of unity $\zeta$. Hence, the rank of $\mathop{NS}((S_{17})_{\overline{\mathbb{F}}_{17^2}})$ equals $20$ by Tate conjectures for K3 surfaces. The rank of $\mathop{NS}((\mathcal{E}_{1}^{''})_{\overline{\mathbb{Q}}})$ is always smaller or equal to the rank of the corresponding N\'{e}ron-Severi group after reduction (cf. Theorem \ref{theorem:Luijk_injective}). Assume for a moment that it is maximal possible, hence equal to $20$. This implies that the discriminants of lattices $\mathop{NS}((S_{11})_{\overline{\mathbb{F}}_{11^2}})$ and $\mathop{NS}((S_{17})_{\overline{\mathbb{F}}_{17^2}})$ should differ by a square.
We apply Theorem \ref{theorem:Artin_Tate} to compute the discriminant of the N\'{e}ron-Severi lattices $NS(\tilde{S}_{\overline{\mathbb{F}}_{17^2}})$ and $NS((S_{11})_{\overline{\mathbb{F}}_{11^2}})$. They are not equal modulo squares, more precisely we have
\[\Delta (NS((S_{11})_{\overline{\mathbb{F}}_{11^2}}))\equiv -3\cdot 7\textrm{ mod }(\mathbb{Q}^{*})^2\]
and
\[\Delta (NS((S_{17})_{\overline{\mathbb{F}}_{17^2}}))\equiv -2\cdot 3\cdot 7\textrm{ mod }(\mathbb{Q}^{*})^2.\]
So the rank of $\mathop{NS}((\mathcal{E}_{1}^{''})_{\overline{\mathbb{Q}}})$ is less or equal to $19$. Note that the trivial sublattice generated by components of singular fibers and the zero section is of rank $18$. We also have the point $Q$ of infinite order, so $19\leq\mathop{rank}\mathop{NS}((\mathcal{E}_{1}^{''})_{\overline{\mathbb{Q}}})$. But the upper bound is also $19$, hence the rank equals $19$. Now an application of the Shioda-Tate formula reveals that the rank of $E_{1}''(\overline{Q}(t))$ is equal to $1$. 
\end{proof}

\begin{corollary}\label{corollary:rank}
The rank of $E_{3}(\overline{\mathbb{Q}}(t))$ is equal to $3$.
\end{corollary}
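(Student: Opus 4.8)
The final statement, Corollary \ref{corollary:rank}, asserts that the rank of $E_{3}(\overline{\mathbb{Q}}(t))$ is equal to $3$. The plan is to read this off directly from the twisting decomposition already established, combining the quadratic-twist rank formula of Proposition \ref{theorem:ranks} with the individual rank computations for the auxiliary surfaces carried out in the preceding lemmas. The essential point is that the curve $E_{3}$ was constructed precisely so that $\sqrt{u}$ with $u=\frac{2t}{5+t^2}$ generates, after suitable reparametrization, the compositum of the function fields over which the various twists become isomorphic; Corollary \ref{lemma:total_rank} has already recorded the resulting additivity of ranks.

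First I would invoke Corollary \ref{lemma:total_rank}, which gives the two chained identities
\[
\mathop{rank}E_{3}(\overline{\mathbb{Q}}(t))=\mathop{rank}E_{2}(\overline{\mathbb{Q}}(t))+\mathop{rank}E_{2}'(\overline{\mathbb{Q}}(t))
\]
and
\[
\mathop{rank}E_{2}'(\overline{\mathbb{Q}}(t))=\mathop{rank}E_{1}'(\overline{\mathbb{Q}}(t))+\mathop{rank}E_{1}''(\overline{\mathbb{Q}}(t)).
\]
Next I would substitute the values already proven: Lemma \ref{lemma:rank_e2} gives $\mathop{rank}E_{2}(\overline{\mathbb{Q}}(t))=2$; Lemma \ref{lemma:rank_e1_prime} gives $\mathop{rank}E_{1}'(\overline{\mathbb{Q}}(t))=0$; and Lemma \ref{lemma:rank_e1_bis} gives $\mathop{rank}E_{1}''(\overline{\mathbb{Q}}(t))=1$. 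From the second identity I obtain $\mathop{rank}E_{2}'(\overline{\mathbb{Q}}(t))=0+1=1$, and then from the first identity $\mathop{rank}E_{3}(\overline{\mathbb{Q}}(t))=2+1=3$, as claimed.

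Since every ingredient is already in hand, there is essentially no obstacle remaining at this stage: the corollary is a bookkeeping assembly of the four rank values. The only point that warrants a sentence of care is verifying that the twisting relations in Corollary \ref{lemma:total_rank} are genuinely the instances of Proposition \ref{theorem:ranks} that they claim to be, i.e.\ that $E_{3}$ really is the quadratic twist of $E_{2}$ by the relevant $u$ (so that its field is $\overline{\mathbb{Q}}(t)(\sqrt{u})$ up to the reparametrization), and similarly that $E_{2}'$ realizes the twist of $E_{2}$ by the points $\pm\frac{1}{\sqrt{5}}$ whose associated field matches. These compatibilities were arranged in the definitions of the twisted surfaces and in Corollary \ref{lemma:total_rank}, so here it suffices to cite them. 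The hard work — the point counts over $\mathbb{F}_{17}$, $\mathbb{F}_{11^2}$, $\mathbb{F}_{17^2}$ and the Artin--Tate discriminant comparison underpinning Lemmas \ref{lemma:rank_e1_prime} and \ref{lemma:rank_e1_bis} — has already been done, so the corollary follows immediately.
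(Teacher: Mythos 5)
Your proposal matches the paper's own proof exactly: Corollary \ref{corollary:rank} is obtained by feeding the values from Lemma \ref{lemma:rank_e2} (rank $2$), Lemma \ref{lemma:rank_e1_prime} (rank $0$) and Lemma \ref{lemma:rank_e1_bis} (rank $1$) into the two identities of Corollary \ref{lemma:total_rank}, giving $2+(0+1)=3$. The arithmetic and the chain of citations are correct, and your extra remark about checking that the twists genuinely instantiate Proposition \ref{theorem:ranks} is a reasonable (if already settled) point of care.
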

\begin{proof}
We apply Lemma $\ref{lemma:total_rank}$ to ranks obtained in Lemma $\ref{lemma:rank_e1_prime}$, Lemma $\ref{lemma:rank_e1_bis}$ and Lemma $\ref{lemma:rank_e2}$. 
\end{proof}

\begin{remark}
One could give a more direct proof of Corollary \ref{corollary:rank} using brute force and more powerful numerical computations. The statement of Corollary \ref{corollary:rank} is equivalent to $\rho((\mathcal{E}_{3})_{\overline{\mathbb{Q}}})=37$ by the Shioda-Tate formula (cf. Table \ref{table:singular_E3} to compute the number of components in singular fibers). Suppose to the contrary that $\rho((\mathcal{E}_{3})_{\overline{\mathbb{Q}}})\geq 38$. This lower bound holds for the N\'{e}ron-Severi group of the reduced elliptic surface at primes of good reduction. Suppose we have two such primes $p_{1}$ and $p_{2}$. Prime $17$ is a good candidate, with the characteristic polynomial of the Frobenius automorphism (acting on the second cohomology group) equal to 
\[(t+17)^{8}(t-17)^{30}(289 - 22t + t^2)(289 - 2t + t^2)(83521 - 2312t + 238t^2 - 8t^3 + t^4).\]
To compute the degree $8$ factor we need to work with surfaces with points in the field $\mathbb{F}_{17^8}$ or $\mathbb{F}_{17^4}$ which follows by the Poincar\'{e} duality. None of the roots of 
\[(289 - 22t + t^2)(289 - 2t + t^2)(83521 - 2312t + 238t^2 - 8t^3 + t^4)\]
are of the shape $17\zeta$ for $\zeta$ a root of unity. Note that the Tate conjecture holds automatically for such a prime. By the results of J. S. Milne, cf. \cite[Theorem 6.1]{Milne_Tate_Conjecture} the Artin-Tate conjecture holds as well. Put $p_{1}=17$ and assume we have another such prime $p_{2}$. This means that we can compare the discriminants of the lattices modulo squares and arrive at a contradiction, which proves that $\rho((\mathcal{E}_{3})_{\overline{\mathbb{Q}}})=37$. Using the method of twists and further computation other good primes can be found, namely $73$ and $97$ but no other up to 140. However, a direct computation of the points on surfaces over $\mathbb{F}_{73^{4}}$ or $\mathbb{F}_{97^{4}}$ is beyond the range of our computational resources.
\end{remark}

\section{Proofs of main results}

\begin{lemma}\label{lemma:torsion}
The torsion subgroup of $E_{3}(\overline{\mathbb{Q}}(t))$ is isomorphic to $\mathbb{Z}/2\mathbb{Z}\oplus\mathbb{Z}/4\mathbb{Z}$. It is generated by points
\begin{align*}
T_{1}&=(-4u^2,0)\\
T_{2}&=(2(-u + u^3),2\sqrt{-1}(u^2-1)u(-1 - 2u + u^2)),
\end{align*}
where $u=\frac{2t}{5+t^2}$.
\end{lemma}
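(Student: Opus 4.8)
The plan is to bound the torsion from below by exhibiting explicit points and from above by combining the component-group injection of Lemma~\ref{lemma:torsion_types} with a classical $2$-descent divisibility criterion. Writing $u=\frac{2t}{t^2+5}$, the curve has the form $y^2=x(x-e_2)(x-e_3)$ with $e_1=0$, $e_2=(u^2-1)^2$ and $e_3=4u^2$, so the three points $(e_i,0)$ are defined over $\overline{\mathbb{Q}}(t)$ and $E_3[2]\cong(\mathbb{Z}/2\mathbb{Z})^2$ already lies in the Mordell-Weil group. A direct substitution verifies that $T_2$ lies on $E_3$, and the duplication formula $x(2P)=\frac{(x^2-e_2 e_3)^2}{4y^2}$ gives $x(2T_2)=0$ because $x(T_2)^2=4u^2(u^2-1)^2=e_2 e_3$; hence $2T_2=(0,0)$, the point $T_2$ has order $4$, and together with a $2$-torsion point distinct from $2T_2$ it generates a subgroup isomorphic to $\mathbb{Z}/2\mathbb{Z}\oplus\mathbb{Z}/4\mathbb{Z}$.

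For the upper bound I would first pin down the exponent. By Lemma~\ref{lemma:torsion_types} there is an injection $\phi\colon E_3(\overline{\mathbb{Q}}(t))_{\mathrm{tors}}\hookrightarrow\prod_{v}G(F_v)$, and Table~\ref{table:singular_E3} shows that every singular fibre is of multiplicative type $I_4$ or $I_2$, so each factor $G(F_v)$ is $\mathbb{Z}/4\mathbb{Z}$ or $\mathbb{Z}/2\mathbb{Z}$ and the target has exponent dividing $4$. Injectivity of $\phi$ then forces $E_3(\overline{\mathbb{Q}}(t))_{\mathrm{tors}}$ to have exponent dividing $4$; since it contains the full $2$-torsion, it must be one of $(\mathbb{Z}/2\mathbb{Z})^2$, $\mathbb{Z}/2\mathbb{Z}\oplus\mathbb{Z}/4\mathbb{Z}$ or $(\mathbb{Z}/4\mathbb{Z})^2$, and the order-$4$ point $T_2$ excludes the first.

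The main obstacle is to eliminate $(\mathbb{Z}/4\mathbb{Z})^2=E_3[4]$: the component-group injection alone cannot do this, since $(\mathbb{Z}/4\mathbb{Z})^2$ embeds into $\prod_v G(F_v)\cong(\mathbb{Z}/4\mathbb{Z})^8\oplus(\mathbb{Z}/2\mathbb{Z})^8$. I would instead invoke the criterion that, for $y^2=(x-e_1)(x-e_2)(x-e_3)$ over a field $K$ of characteristic $\ne 2$, the point $(e_i,0)$ lies in $2E(K)$ if and only if $e_i-e_j$ is a square in $K$ for both $j\ne i$. Here $-e_2=(\sqrt{-1}(u^2-1))^2$ and $-e_3=(2\sqrt{-1}\,u)^2$ are squares, so $(0,0)\in 2E_3(\overline{\mathbb{Q}}(t))$ (consistent with $2T_2=(0,0)$), whereas $(e_2,0),(e_3,0)\in 2E_3(\overline{\mathbb{Q}}(t))$ would require $e_2-e_3=(u^2-1)^2-4u^2=u^4-6u^2+1$ to be a square in $\overline{\mathbb{Q}}(t)$. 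Clearing the square denominator $(t^2+5)^4$, this amounts to asking whether the octic
\[N(t)=t^8-4t^6-74t^4-100t^2+625\]
is a perfect square in $\overline{\mathbb{Q}}[t]$. Since $N$ is even, any square root would be even of the form $t^4+bt^2+c$, and comparing coefficients in $(t^4+bt^2+c)^2$ yields the incompatible constraints $b=-2$, $c^2=625$ and $b^2+2c=-74$; hence $N$ is not a square.

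Therefore $(e_2,0),(e_3,0)\notin 2E_3(\overline{\mathbb{Q}}(t))$, so $E_3[4]\not\subseteq E_3(\overline{\mathbb{Q}}(t))$ and $(\mathbb{Z}/4\mathbb{Z})^2$ is excluded. Combined with the previous paragraph this leaves only $E_3(\overline{\mathbb{Q}}(t))_{\mathrm{tors}}\cong\mathbb{Z}/2\mathbb{Z}\oplus\mathbb{Z}/4\mathbb{Z}$, generated by $T_2$ and one of the remaining $2$-torsion points, as claimed. The essential input is thus the non-squareness of the octic $N(t)$; everything else is bookkeeping with the fibre table and the duplication formula.
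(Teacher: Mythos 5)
Your proof is correct and follows essentially the same route as the paper: both use the component-group injection of Lemma~\ref{lemma:torsion_types} to bound the exponent by $4$, exhibit $T_2$ with $2T_2=(0,0)$, and then rule out extra $4$-torsion by showing the other nontrivial $2$-torsion points are not halvable, with your square-difference criterion for $(e_i,0)\in 2E_3(K)$ being equivalent to the paper's explicit duplication-formula discriminant (both reduce to the non-squareness of the same octic $t^8-4t^6-74t^4-100t^2+625$). The only cosmetic difference is that the paper verifies non-halvability of both $(4u^2,0)$ and $((u^2-1)^2,0)$ directly, whereas you package the exclusion as ruling out $E_3[4]\subseteq E_3(\overline{\mathbb{Q}}(t))$.
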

\begin{proof}
Let $K=\overline{\mathbb{Q}}(t)$. The elliptic surface associated to $E_{3}$ has singular fibers of types $I_{2}$ and $I_{4}$ (cf. Table \ref{table:singular_E3}), hence
\[E_{3}(K)_{\mathop{tors}}\hookrightarrow (\mathbb{Z}/2\mathbb{Z})^{a}\oplus(\mathbb{Z}/4\mathbb{Z})^{b}\]
for some natural numbers $a$ and $b$ by Lemma \ref{lemma:torsion_types}. The $2$-torsion subgroup is generated by $T_{1}$ and $(0,0)$. We will check that $T_{1}\notin 2E_{3}(K)$ and that $T_{1}+(0,0)\notin 2E_{3}(K)$, but $(0,0)\in 2E_{3}(K)$.

Let $P=(x,y)$ be any point in $E_{3}(K)$. Then the $x$-coordinate of $2P$ is equal to
\[x(2P)=\frac{\left(4 u^2-8 u^4+4 u^6-x^2\right)^2}{4 \left(4 u^2-x\right) \left(1-2 u^2+u^4-x\right) x},\]
where $u=\frac{2t}{5+t^2}$. If $T_{1}$ were in $2E_{3}(K)$, then
\[x(2P)=4u^2\] and in consequence the equation
\[16 t^2 (25 + 6 t^2 + t^4)^2 - 32 t^2 (5 + t^2)^4 x + (5 + t^2)^6 x^2= 0\]
would have a solution $x\in K$. The discriminant of the above quadratic polynomial is equal to
\[-64 t^2 (5 + t^2)^6 (625 - 100 t^2 - 74 t^4 - 4 t^6 + t^8)\]
and it is not a square in $K$, hence we get a contradiction. Similarly one can show that $T_{1}+(0,0)= ((u^2-1)^2,0)$ is not in $2E_{3}(K)$. Finally, it is easy to check that $2T_{2}=(0,0)$. The claim follows from that.
\end{proof}

\begin{lemma}\label{lemma:mordell_weil_generators}
The group $E_{3}(\overline{\mathbb{Q}}(t))/E_{3}(\overline{\mathbb{Q}}(t))_{\textrm{tors}}$ is free abelian of rank $3$. It is generated by the following points
\begin{align*}
P_{1}&=(2 (1 + \sqrt{2}) (-1 + u)^2 u,2\sqrt{-1} (1 + \sqrt{2}) (-1 + (\sqrt{2} - u)^2) (-1 + u)^2 u),\\
P_{2}&=(2(u - 1)^2,2(-1 + u)^2 (-1 + 2u + u^2)),\\
P_{3}&=(1 - u^2, \frac{(-5 + t^2) u (-1 + u^2)}{5 + t^2}),
\end{align*}
where $u=\frac{2t}{5+t^2}$.
\end{lemma}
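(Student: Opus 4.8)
The plan is to leverage what has already been established: by Corollary \ref{corollary:rank} the rank of $E_3(\overline{\mathbb{Q}}(t))$ is $3$, and by Lemma \ref{lemma:torsion} its torsion is $\mathbb{Z}/2\mathbb{Z}\oplus\mathbb{Z}/4\mathbb{Z}$, so $E_3(\overline{\mathbb{Q}}(t))/E_3(\overline{\mathbb{Q}}(t))_{\mathrm{tors}}$ is free abelian of rank $3$. First I would substitute each of $P_1,P_2,P_3$ into the Weierstrass equation of $E_3$ to confirm they are genuine points, and then compute the canonical height pairing Gram matrix $\langle P_i,P_j\rangle$ using the local contributions read off from the fiber configuration of Table \ref{table:singular_E3}. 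A nonvanishing determinant shows that the three points are linearly independent and hence span a finite-index sublattice $L_0$ of the full Mordell--Weil lattice $L=E_3(\overline{\mathbb{Q}}(t))/\mathrm{tors}$.

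The conceptual backbone of the argument is the quadratic base change. The curve $E_3$ is obtained from $E_2$ by the substitution $t\mapsto u=\frac{2t}{5+t^2}$, which realises $\overline{\mathbb{Q}}(t)$ as a degree-$2$ extension of $\overline{\mathbb{Q}}(u)$ with nontrivial automorphism $\sigma\colon t\mapsto 5/t$ fixing $u$. By Proposition \ref{theorem:ranks} the Mordell--Weil group splits rationally into $\sigma$-eigenspaces: the $+1$-eigenspace $L^{+}$ is the image of $E_2(\overline{\mathbb{Q}}(u))$, of rank $2$, while the $-1$-eigenspace $L^{-}$ is the contribution of the twist $E_2'$, of rank $1$. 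I would check that $P_1$ and $P_2$ are $\sigma$-invariant, their coordinates being rational functions of $u$ alone, whereas $\sigma(P_3)=-P_3$, since the factor $\frac{t^2-5}{t^2+5}=\sqrt{1-5u^2}$ occurring in the $y$-coordinate of $P_3$ changes sign. Thus $P_1,P_2\in L^{+}$ and $P_3\in L^{-}$. Because the canonical height is Galois-invariant, $L^{+}$ and $L^{-}$ are orthogonal, so the Gram matrix is block diagonal with $\langle P_1,P_3\rangle=\langle P_2,P_3\rangle=0$.

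With this decomposition the generation statement splits into three tasks. First, that $P_3$ generates $L^{-}$: as $L^{-}\cong E_2'(\overline{\mathbb{Q}}(u))/\mathrm{tors}$ has rank one, this is a single divisibility check comparing $\langle P_3,P_3\rangle$ with the minimal height on the Mordell--Weil lattice of the K3 surface $\mathcal{E}_2'$, whose discriminant is accessible through the Artin--Tate computation used in Lemma \ref{lemma:rank_e1_bis}. Second, that $P_1,P_2$ generate $L^{+}\cong E_2(\overline{\mathbb{Q}}(u))/\mathrm{tors}$: here $\mathcal{E}_2$ is the K3 surface of Lemma \ref{lemma:rank_e2}, so I would pin down the discriminant of its Mordell--Weil lattice via Theorem \ref{theorem:Artin_Tate} and match it with the determinant of the $2\times 2$ height block, expressing $P_1,P_2$ through the generators $P,Q$ of that lemma if necessary. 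Third, the gluing: the descent identity $v+\sigma v\in L^{+}$, $v-\sigma v\in L^{-}$ gives $2L\subseteq L^{+}\oplus L^{-}$, so once both eigen-sublattices are generated it remains to verify that no nontrivial half-sum $\frac{1}{2}(\varepsilon_1 P_1+\varepsilon_2 P_2+\varepsilon_3 P_3)$ with $\varepsilon_i\in\{0,1\}$ lies in $E_3(\overline{\mathbb{Q}}(t))$ modulo torsion, which is a finite collection of explicit $2$-divisibility tests.

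The main obstacle is exactly the passage from a finite-index sublattice to the full lattice, that is, proving $[L:L_0]=1$ rather than merely recovering the rank. The height and eigenspace computations are largely mechanical once the local intersection data at the singular fibers has been determined, but the discriminant comparisons only pin indices down modulo squares, so the argument must be supplemented by genuine $2$-descent computations both inside each eigenspace and for the gluing. Controlling the $2$-primary part of $[L:L_0]$, while tracking how the torsion $\mathbb{Z}/2\mathbb{Z}\oplus\mathbb{Z}/4\mathbb{Z}$ interacts with divisibility of the sections, is where the real work lies. Since $\mathcal{E}_3$ has Kodaira dimension $1$ and is not K3, it is crucial that every piece of discriminant information be extracted from the K3 surfaces $\mathcal{E}_2$ and $\mathcal{E}_2'$ and transported through the base change, rather than computed on $\mathcal{E}_3$ directly.
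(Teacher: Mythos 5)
Your strategy is sound in outline and genuinely different from the paper's in its middle portion. The paper never passes through the eigenspace decomposition at this stage: it computes the height Gram matrix of $P_1,P_2,P_3$ directly on $E_3$ (diagonal with $\langle P_i,P_i\rangle_{E_3}=i$, off-diagonal zero), rescales by $4$ to make the pairing integral, deduces from $\Delta(\Lambda')=6\cdot 4^3=n^2\Delta(\Lambda)$ that the index $n$ divides $8$, and then kills the $2$-part of $n$ in a single stroke by an explicit $2$-descent: the image of the subgroup generated by $P_1,P_2,P_3,T_1,T_2$ under $\psi:E_3(K)/2E_3(K)\hookrightarrow K^{*}/(K^{*})^{2}\times K^{*}/(K^{*})^{2}$ already has order $16$, which is incompatible with every elementary-divisor pattern allowed by $n\in\{2,4,8\}$. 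Your route via the involution $t\mapsto 5/t$ is attractive because it explains why $P_1,P_2$ are invariant and $P_3$ anti-invariant (via $\frac{t^2-5}{t^2+5}=\pm\sqrt{1-5u^2}$), hence why the Gram matrix is block diagonal, and it would let you reuse Lemma~\ref{lemma:rank_K3} for the $+1$-part; but it replaces one $2$-descent by three (inside $L^{+}$, inside $L^{-}$, and the gluing $2L\subseteq L^{+}\oplus L^{-}$, where the interaction with the torsion $\mathbb{Z}/2\mathbb{Z}\oplus\mathbb{Z}/4\mathbb{Z}$ is genuinely delicate), and none of these decisive computations is actually carried out in your sketch.

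One concrete error: $\mathcal{E}_2'$ is not a K3 surface. Twisting $\mathcal{E}_2$ at the two points $t=\pm 1/\sqrt{5}$, which are points of good reduction, inserts two fibers of type $I_0^{*}$, so $e(\mathcal{E}_2')=24+12=36$ and $\chi(\mathcal{O}_{\mathcal{E}_2'})=3$. Theorem~\ref{theorem:Artin_Tate}, as stated for K3 surfaces, therefore does not apply to it, and Lemma~\ref{lemma:rank_e1_bis} computes N\'eron--Severi discriminants of $\mathcal{E}_1''$, not the Mordell--Weil discriminant of $E_2'$. Moreover, a discriminant known only modulo squares cannot distinguish index $1$ from index $2$, as you concede; so for the rank-one piece you should simply verify directly that $P_3\notin 2E_3(\overline{\mathbb{Q}}(t))+E_3(\overline{\mathbb{Q}}(t))_{\mathrm{tors}}$, which is exactly the kind of check the paper's single $\psi$-computation performs for all three generators simultaneously.
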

\begin{proof}
We follow the argument in the proof of \cite[Proposition 4.2]{Luijk_Matrices}. We put $K=\overline{\mathbb{Q}}(t)$. Let $(E_{3}(K)/E_{3}(K)_{\mathop{tors}},\langle\cdot,\cdot\rangle_{E_{3}})$ denote the Mordell-Weil lattice with the height pairing $\langle\cdot,\cdot\rangle_{E_{3}}$. From the type of singular fibers, i.e. $I_{2}$ and $I_{4}$, cf. Table \ref{table:singular_E3}, we know that for each $P,Q\in E_{3}(K)/E_{3}(K)_{\mathop{tors}}$ we have $\langle P,Q\rangle_{E_{3}}\in \frac{1}{4}\mathbb{Z}$. 

Consider the lattice $\Lambda=E_{3}(K)/E_{3}(K)_{\mathop{tors}}$ with the pairing $\langle\cdot,\cdot\rangle=4\langle\cdot,\cdot\rangle_{E_{3}}$. Let $\Lambda '$ be generated by $P_{1}$, $P_{2}$ and $P_{3}$. It is a sublattice of $\Lambda$ of a finite index $n=[\Lambda:\Lambda ']$. In the lattice $\Lambda$ we have 
$\langle P_{i}, P_{i}\rangle = 4 i$ for $i=1,2,3$ and $\langle P_{i},P_{j}\rangle = 0$ for $i\neq j$. Hence, the following equality holds for discriminants of lattices $\Lambda$ and $\Lambda '$ with the pairing $\langle\cdot,\cdot\rangle$
\[6\cdot 4^3 = \Delta(\Lambda ') = n^2 \Delta(\Lambda).\]
Therefore, $n$ divides $8$. We want to show that $n=1$. Consider the $2$-descent homomorphism
\[\psi:E_{3}(K)/2E_{3}(K)\hookrightarrow K^{*}/(K^{*})^{2}\times K^{*}/(K^{*})^{2}.\]
The homomorphism $\psi$ is defined for points $(x,y)$ in $E_{3}(K)\setminus E_{3}(K)[2]$ by the formula \[\psi(x,y)=(x-e_{1},x-e_{2}),\]
where $e_{1}=0$ and $e_{2}=4u^2$.

\noindent
Let $H$ denote the group generated by points $P_{1},P_{2},P_{3}, T_{1}, T_{2}$ and let $G$ denote 
$E_{3}(K)$. The index $n$ equals $[G:H]$. There exist elements $R_{1},R_{2},R_{3}\in G$ such that $G$ is generated by $R_{1},R_{2},R_{3},T_{1},T_{2}$ and $H$ is generated by $a R_{1}, b R_{2},c R_{3},T_{1},T_{2}$, where $n=abc$ and $a|b|c$. For $n=8$, it follows that $(a,b,c)\in\{(1,1,8),(1,2,4),(2,2,2)\}$. For $n=4$, we have $(a,b,c)\in\{(1,1,4),(1,2,2)\}$ and for $n=2$ there is only one tuple $(a,b,c)=(1,1,2)$. Consider the modulo 2 map $\phi:G\rightarrow G/2G$ and $\eta=\psi\circ\phi$. The image $\eta(G)$ is isomorphic to $(\mathbb{Z}/2\mathbb{Z})^{4}$. If $n=8$, then $\eta(H)\cong (\mathbb{Z}/2\mathbb{Z})^{i}$, where $1\leq i\leq 3$. If $n=4$, then $\eta(H)\cong (\mathbb{Z}/2\mathbb{Z})^{i}$, where $2\leq i\leq 3$. If $n=2$, then $\eta(H)\cong (\mathbb{Z}/2\mathbb{Z})^3$. Hence, to show that $H=G$ it is sufficient to prove that $\eta(H)\cong (\mathbb{Z}/2\mathbb{Z})^4$. We easily compute
\begin{align*}
\eta(P_1)&=\left((t(t^2+5),(t^2+5)t \left(-5+\left(-2+2 \sqrt{2}\right) t-t^2\right) \left(-5+\left(2+2 \sqrt{2}\right) t-t^2\right)\right),\\
\eta(P_2)&=(1,t^4-4 t^3+6 t^2-20 t+25),\\
\eta(P_3)&=(\left(t^2-2 t+5\right) \left(t^2+2 t+5\right),1),\\
\eta(T_2)&=(t \left(t^2-2t+5\right) \left(t^2+2t+5\right)(t^{2}+5),t \left(t^4+4 t^3+6 t^2+20 t+25\right)(t^{2}+5))
\end{align*}
and prove that $|\eta(H)|=16$, which proves the theorem.
\end{proof}

\begin{corollary}\label{corollary:generators_over_Qt}
The group $E_{3}(\mathbb{Q}(t))$ is isomorphic to $\mathbb{Z}^{2}\oplus\mathbb{Z}/2\mathbb{Z}\oplus\mathbb{Z}/2\mathbb{Z}$. The free part  is generated by $P_{2}$, $P_{3}$. The torsion part is generated by $T_{1}$, $2T_{2}=(0,0)$.
\end{corollary}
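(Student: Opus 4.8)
The plan is to identify $E_3(\mathbb{Q}(t))$ with the Galois-fixed subgroup $E_3(\overline{\mathbb{Q}}(t))^{\mathrm{Gal}(\overline{\mathbb{Q}}/\mathbb{Q})}$ and to read this subgroup off from the explicit generators furnished by Lemma \ref{lemma:torsion} and Lemma \ref{lemma:mordell_weil_generators}. First I would note that every generator $P_1,P_2,P_3,T_1,T_2$ has coordinates in $\mathbb{Q}(\sqrt{2},\sqrt{-1})(t)$, so the entire group $E_3(\overline{\mathbb{Q}}(t))$ is already defined over this field and the Galois action factors through $G=\mathrm{Gal}(\mathbb{Q}(\sqrt{2},\sqrt{-1})/\mathbb{Q})\cong(\mathbb{Z}/2\mathbb{Z})^2$, generated by $\sigma\colon\sqrt{2}\mapsto-\sqrt{2}$ and $\tau\colon\sqrt{-1}\mapsto-\sqrt{-1}$. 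Hence $E_3(\mathbb{Q}(t))=E_3(\overline{\mathbb{Q}}(t))^G$, and the whole problem reduces to computing this fixed subgroup.

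Next I would compute the action of $\sigma$ and $\tau$ directly from the formulas. The points $P_2,P_3,T_1$ have coordinates in $\mathbb{Q}(t)$ and are therefore fixed by all of $G$. For $P_1$ and $T_2$ the $x$-coordinates lie in $\mathbb{Q}(\sqrt{2})(t)$ and carry no factor of $\sqrt{-1}$, while the $y$-coordinates equal $\sqrt{-1}$ times an element of $\mathbb{Q}(\sqrt{2})(t)$; since the negation on the Weierstrass model sends $(x,y)$ to $(x,-y)$, applying $\tau$ gives \emph{exactly} $\tau(P_1)=-P_1$ and $\tau(T_2)=-T_2$, with no spurious torsion summand. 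Because neither $T_1$ nor $T_2$ involves $\sqrt{2}$, both are fixed by $\sigma$; the action of $\sigma$ on $P_1$ is more complicated but will turn out to be irrelevant. As a conceptual cross-check, the height pairing of Lemma \ref{lemma:mordell_weil_generators} (with $\langle P_i,P_i\rangle_{E_3}=i$ pairwise distinct) forces any isometry of the Mordell--Weil lattice to send each $P_i$ to $\pm P_i$, since the only lattice vectors of height $i$ are $\pm P_i$.

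With the action in hand, I would combine the two lemmas into the internal direct sum
\[E_3(\overline{\mathbb{Q}}(t))=\langle P_1\rangle\oplus\langle P_2\rangle\oplus\langle P_3\rangle\oplus\langle T_1\rangle\oplus\langle T_2\rangle,\]
which is valid because $P_1,P_2,P_3$ form a $\mathbb{Z}$-basis of the free quotient (the index computed to be $1$ in Lemma \ref{lemma:mordell_weil_generators}) and $\langle T_1,T_2\rangle\cong\mathbb{Z}/2\mathbb{Z}\oplus\mathbb{Z}/4\mathbb{Z}$ is the full torsion by Lemma \ref{lemma:torsion}. Writing a general element as $R=a_1P_1+a_2P_2+a_3P_3+b_1T_1+b_2T_2$ and imposing $\tau(R)=R$ forces, component by component, $a_1=0$ (from $-a_1=a_1$) and $-b_2\equiv b_2\pmod 4$, whence $b_2$ is even. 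Once $a_1=0$, invariance under $\sigma$ is automatic, so the $G$-fixed subgroup is precisely $\langle P_2,P_3,T_1,2T_2\rangle$. As $P_2,P_3$ are part of a basis of the free quotient they remain independent and saturated, while $T_1$ and $2T_2=(0,0)$ are distinct points of order $2$; this yields $E_3(\mathbb{Q}(t))\cong\mathbb{Z}^2\oplus\mathbb{Z}/2\mathbb{Z}\oplus\mathbb{Z}/2\mathbb{Z}$ with the stated generators, consistent with the rank drop from $3$ (Corollary \ref{corollary:rank}) to $2$.

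I expect the main obstacle to be pinning down the Galois action precisely enough to conclude $a_1=0$: one must verify that $\tau$ sends $P_1$ to $-P_1$ on the nose, rather than merely to something congruent to $-P_1$ modulo an uncontrolled torsion element, which is exactly why reading the single $\sqrt{-1}$ factor in the $y$-coordinate is cleaner than invoking only an abstract lattice-isometry argument. Everything else is a finite, essentially linear-algebraic computation in a group of rank $3$ with small torsion.
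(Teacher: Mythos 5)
Your proof is correct, and it hinges on the same key fact as the paper's argument --- that the automorphism $\tau$ fixing $\sqrt{2}$ and negating $\sqrt{-1}$ sends $P_{1}$ to exactly $-P_{1}$ (and $T_{2}$ to $-T_{2}$), because the single factor $\sqrt{-1}$ sits in the $y$-coordinate and negation on the Weierstrass model is $(x,y)\mapsto(x,-y)$ --- but you organize it differently. The paper splits the corollary into three separate steps: it bounds the rank by tensoring $E_{3}(\overline{\mathbb{Q}}(t))$ with $\mathbb{Q}$ and noting that the resulting Galois representation, being nontrivial, cannot be trivial on a finite-index subgroup; it deduces saturation of $\langle P_{2},P_{3}\rangle$ by a lattice-index comparison with Lemma \ref{lemma:mordell_weil_generators}; and it quotes Lemma \ref{lemma:torsion} for the torsion. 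You instead compute the full Galois-fixed subgroup integrally in one pass, using the internal direct sum $\langle P_{1}\rangle\oplus\langle P_{2}\rangle\oplus\langle P_{3}\rangle\oplus\langle T_{1}\rangle\oplus\langle T_{2}\rangle$ and the explicit action of $\mathrm{Gal}(\mathbb{Q}(\sqrt{2},\sqrt{-1})/\mathbb{Q})$ on the generators. This buys you a genuinely cleaner treatment of the saturation step: the paper's phrase ``then the lattice generated by $P_{1},P_{2},P_{3}$ would be of index greater than $1$'' tacitly needs the fact that a $\mathbb{Q}(t)$-rational point has vanishing $P_{1}$-component, which is precisely your $a_{1}=0$ computation, and your version also recovers the $\mathbb{Q}(t)$-torsion (the condition $b_{2}$ even) from the same calculation rather than as a separate claim. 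Two small points to make explicit: the identification $E_{3}(\mathbb{Q}(t))=E_{3}(\overline{\mathbb{Q}}(t))^{\mathrm{Gal}(\overline{\mathbb{Q}}/\mathbb{Q})}$ rests on $\overline{\mathbb{Q}}(t)^{\mathrm{Gal}(\overline{\mathbb{Q}}/\mathbb{Q})}=\mathbb{Q}(t)$ (standard, via linear disjointness of $\overline{\mathbb{Q}}$ and $\mathbb{Q}(t)$ over $\mathbb{Q}$, but worth a sentence); and your height-pairing ``cross-check'' is dispensable since, as you yourself observe, it only controls $\tau(P_{1})$ up to a torsion ambiguity that the coordinate computation is needed to remove.
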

\begin{proof}
First we prove that the rank of the group $E_{3}(\mathbb{Q}(t))$ equals $2$. From Corollary \ref{corollary:rank} we know that $\mathop{rank}E_{3}(\mathbb{Q}(t))\leq 3$. Since the points $P_{2}$ and $P_{3}$ are linearly independent it is enough to show that $\mathop{rank}E_{3}(\mathbb{Q}(t))$ is smaller than $3$.

Suppose to the contrary that the rank of $H=E_{3}(\mathbb{Q}(t))$ equals $3$. Then $H$ is a finite index subgroup of the group $G=E_{3}(\overline{\mathbb{Q}}(t))$ generated by $P_{1},P_{2},P_{3},T_{1}$ and $T_{2}$. Consider the $3$-dimensional $\mathbb{Q}$-vector space $G_{\mathbb{Q}}=G\otimes_{\mathbb{Z}}\mathbb{Q}$. There is a natural Galois representation 
\[\rho:\mathop{Gal}(\overline{\mathbb{Q}}/\mathbb{Q})\rightarrow\mathop{Aut}(G_{\mathbb{Q}}).\]
For $\sigma\in\mathop{Gal}(\overline{\mathbb{Q}}/\mathbb{Q})$ and $P\in G$ we define $\sigma(P\otimes 1)=\sigma(P)\otimes 1$, where $\sigma(P)$ denotes the element in $G$ such that $\sigma$ acts on the coefficients of rational functions in the coordinates of $P$. If $\sigma(\sqrt{-1})=-\sqrt{-1}$ and $\sigma(\sqrt{2})=\sqrt{2}$, then $\sigma(P_{1}\otimes 1)=-(P_{1}\otimes 1)$. In the basis $\{P_{1}\otimes 1,P_{2}\otimes 1,P_{3}\otimes 1\}$ of $G_{\mathbb{Q}}$, the matrix of the automorphism $\rho(\sigma)$ is 
\[\left(\begin{array}{ccc}-1 & 0 & 0\\ 0 & 1 & 0\\ 0 & 0 & 1\end{array}\right).\]
Hence, the representation $\rho$ is nontrivial. However, $G_{\mathbb{Q}}=H_{\mathbb{Q}}$, since we assumed that $H$ is of finite index in $G$. The representation $\rho$ acts trivially on $H_{\mathbb{Q}}$, which leads to a contradiction. Hence, $H$ is not of finite index in $G$, which implies that $\mathop{rank}H=2$.

If the Mordell-Weil lattice $E_{3}(\mathbb{Q}(t))/E_{3}(\mathbb{Q}(t))_{\mathop{tors}}$ were not generated by $P_{2}$ and $P_{3}$, then the lattice generated by those points would be of finite index greater than $1$ in the full Mordell-Weil lattice. Then the lattice generated by $P_{1}$, $P_{2}$ and $P_{3}$ would be of index greater than $1$ in the full Mordell-Weil lattice $E_{3}(\overline{\mathbb{Q}}(t))/E_{3}(\overline{\mathbb{Q}}(t))_{\mathop{tors}}$, which contradicts Lemma \ref{lemma:mordell_weil_generators}.

To conclude the proof, we compute the torsion part. Lemma \ref{lemma:torsion} shows that the torsion defined over $\mathbb{Q}(t)$ is generated by $T_{1}=(4u^2,0)$ and $(0,0)=2T_{2}$. It is the full torsion subgroup of $E_{3}(\mathbb{Q}(t))$.
\end{proof}

\begin{proof}[Proof of Theorem \ref{Ranks_two}]
From Corollary \ref{corollary:rank} it follows that $\mathop{rank}E_{3}(\overline{\mathbb{Q}}(t))=3$. Lemma \ref{lemma:mordell_weil_generators} and Lemma \ref{lemma:torsion} give explicit generators over $\overline{\mathbb{Q}}(t)$. Finally, Corollary \ref{corollary:generators_over_Qt} shows that the rank over $\mathbb{Q}(t)$ is $2$ and it gives explicit generators.
\end{proof}

\begin{proof}[Proof of Theorem \ref{Ranks_one}]
We apply the specialization theorem (cf. \cite[Theorem 11.4]{Silverman_book}) to the family
\begin{equation}\label{eq:formula_proof}
y^2=x(x-(u^2-1)^2)(x-4u^2)
\end{equation}
with a rational parameter $t$ and $u=\frac{2t}{5+t^2}$. The curve is nonsingular for any $t\neq 0$. Let $\frac{p}{q}=t$ denote a rational number where $p$ and $q\neq 0$ are integers. Let $\frac{P}{Q} = u = \frac{2pq}{p^2+5q^2}$ where $P$ and $Q\neq 0$ are integers. We claim that the triple $(a,b,c)=(P^2-Q^2,2PQ,P^2+Q^2)$ is an element of $\mathcal{S}$. Suppose to the contrary that $a=0$ or $b=0$. If $a=0$, then $P^2=Q^2$, hence $u=\pm 1$ and $(5+t^2)=\pm 2 t$, which does not have solutions $t\in\mathbb{Q}$, a contradiction. If $b=0$, then $P=0$ and $p=0$, so $t=0$, which is not possible, because the curve is nonsingular. The triple $(a,b,c)$ determines an associated elliptic curve
\[y^2=x(x-a^2)(x-b^2)\]
which has the following two points
\[Q_{1}=\left(\frac{1}{2} (a+b-c)^2,\frac{1}{2} (a+b) (a+b-c)^2\right),\]
\[Q_{2}=\left(\frac{1}{2} a (a-c),\frac{1}{2} a b \frac{1}{k^2}\left(p^4-25 q^4\right)\right),\]
where $k=\mathop{GCD}(2pq,p^2+5q^2)$.
The points $Q_{1}$ and $Q_{2}$ are obtained from 
\[P_{2}=(2(u - 1)^2,2(-1 + u)^2 (-1 + 2u + u^2))\]
and
\[P_{3}=\left(1 - u^2, \frac{\left(-5 + t^2\right) u \left(-1 + u^2\right)}{5 + t^2}\right)\]
by the map $(x,y)\mapsto \left(x\frac{(a-c)^2}{4},y\frac{(c-a)^3}{8}\right)$. The specialization theorem shows that the points $Q_{1}$ and $Q_{2}$ are linearly independent for almost all values of $t$. By Proposition \ref{proposition:bijection_classes} for all but finitely many elements of $\mathcal{S}/\sim$ the rank of the group of $\mathbb{Q}$-rational points on the curve $E_{(a,b,c)}$, $(a,b,c)\in \mathcal{S}/\sim$ is at least two. Hence for infinitely many $(a,b,c)\in\mathcal{S}$ the group $E_{(a,b,c)}(\mathbb{Q})$ has rank at least two.
\end{proof}

\begin{remark}
Observe that the point
\[(c^2,abc)=-2\left(\frac{1}{2} (a+b-c)^2,\frac{1}{2} (a+b) (a+b-c)^2\right)\]
is on the curve
\[y^2=x(x-a^2)(x-b^2).\]
The point $\left(\frac{1}{2} (a+b-c)^2,\frac{1}{2} (a+b) (a+b-c)^2\right)$ corresponds to the point
\[\left(2(t-1)^2,2 (t-1)^2 (-1+2t+t^2)\right)\]
via the inverse of the map $(x,y)\mapsto \left(x\frac{(a-c)^2}{4},y\frac{(c-a)^3}{8}\right)$. The point 
\[\left(2(t-1)^2,2 (t-1)^2 (-1+2t+t^2)\right)\]
is a generator of the free part of the Mordell-Weil group over $\mathbb{Q}(t)$ on the curve
\[y^2=x(x-(t^2-1)^2)(x-4t^2).\]
We prove this fact in the next lemma.
\end{remark}

\begin{lemma}\label{lemma:rank_K3}
The group $E_{2}(\overline{\mathbb{Q}}(t))$ is isomorphic to $\mathbb{Z}^{2}\oplus\mathbb{Z}/2\mathbb{Z}\oplus\mathbb{Z}/4\mathbb{Z}$. The free part is generated by points.
\begin{align*}
P_{1}&=(2 (1 + \sqrt{2}) (-1 + t)^2 t,2\sqrt{-1} (1 + \sqrt{2}) (-1 + (\sqrt{2} - t)^2) (-1 + t)^2 t),\\
P_{2}&=(2(t - 1)^2,2(-1 + t)^2 (-1 + 2t + t^2)).
\end{align*}
The torsion part is generated by 
\begin{align*}
T_{1}&=(-4t^2,0)\\
T_{2}&=(2(-t + t^3),2\sqrt{-1}(t^2-1)t(-1 - 2t + t^2)).
\end{align*}
The group $E_{2}(\mathbb{Q}(t))$ is generated by $P_{2}$, $T_{1}$ and $2T_{2}=(0,0)$. 
\end{lemma}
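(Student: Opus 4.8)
The plan is to mirror, with $u$ replaced by $t$, the three-step analysis already carried out for $E_3$ in Lemma \ref{lemma:torsion}, Lemma \ref{lemma:mordell_weil_generators} and Corollary \ref{corollary:generators_over_Qt}, exploiting that $E_2$ is the generic member $y^2=x(x-(s^2-1)^2)(x-4s^2)$ evaluated at $s=t$, whereas $E_3$ is its pullback along the degree-two map $t\mapsto\frac{2t}{5+t^2}$. Write $K=\overline{\mathbb{Q}}(t)$. The rank is already settled: by Lemma \ref{lemma:rank_e2} we have $\mathop{rank}E_2(K)=2$, so it remains to determine the torsion, exhibit free generators, and descend to $\mathbb{Q}(t)$.

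For the torsion, I would read off from Table \ref{table:singular_E2} that the singular fibres are four of type $I_4$ and four of type $I_2$; Lemma \ref{lemma:torsion_types} then yields an injection $E_2(K)_{\mathop{tors}}\hookrightarrow(\mathbb{Z}/4\mathbb{Z})^4\oplus(\mathbb{Z}/2\mathbb{Z})^4$, so the torsion has exponent dividing $4$ and embeds into $(\mathbb{Z}/2\mathbb{Z})^a\oplus(\mathbb{Z}/4\mathbb{Z})^b$. Since the defining cubic splits over $K$, the full $2$-torsion $(\mathbb{Z}/2\mathbb{Z})^2$ is present, and a direct computation gives $2T_2=(0,0)$, so $T_2$ has order $4$. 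To exclude $(\mathbb{Z}/4\mathbb{Z})^2$ and thereby pin the torsion down to $\mathbb{Z}/2\mathbb{Z}\oplus\mathbb{Z}/4\mathbb{Z}$, I would check that the two nonzero $2$-torsion points with $x$-coordinates $4t^2$ and $(t^2-1)^2$ do not lie in $2E_2(K)$, exactly as in Lemma \ref{lemma:torsion}: imposing $x(2P)=4t^2$ (respectively $x(2P)=(t^2-1)^2$) in the duplication formula produces a quadratic in $x$ whose discriminant is an explicit polynomial in $t$ that one checks is not a square in $K$.

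For the free part I would run the Mordell-Weil lattice argument of Lemma \ref{lemma:mordell_weil_generators}. One computes the height pairings of $P_1,P_2$ from the components met at the bad fibres and the intersection with the zero section; the value $\langle P_2,P_2\rangle=1$ is already recorded in Lemma \ref{lemma:rank_e2}, and together with $\langle P_1,P_1\rangle$ and $\langle P_1,P_2\rangle$ these fix the discriminant of the sublattice $\Lambda'=\langle P_1,P_2\rangle$ of $\Lambda=E_2(K)/E_2(K)_{\mathop{tors}}$, bounding the index $n=[\Lambda:\Lambda']$ by a power of $2$. To force $n=1$ I would use the $2$-descent homomorphism $\psi(x,y)=(x,\,x-4t^2)$ into $K^{*}/(K^{*})^2\times K^{*}/(K^{*})^2$ and compute the images $\eta(P_1),\eta(P_2),\eta(T_2)$ of the reductions modulo $2$; by the same index-versus-image bookkeeping as in Lemma \ref{lemma:mordell_weil_generators}, maximality of the image of $\langle P_1,P_2,T_1,T_2\rangle$ excludes every value $n>1$.

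Finally, the descent to $\mathbb{Q}(t)$ follows the Galois argument of Corollary \ref{corollary:generators_over_Qt}. The coordinates of $P_1$ and of $T_2$ involve $\sqrt{2}$ and $\sqrt{-1}$, whereas $P_2$, $T_1$ and $(0,0)=2T_2$ are defined over $\mathbb{Q}(t)$. Choosing $\sigma\in\mathop{Gal}(\overline{\mathbb{Q}}/\mathbb{Q})$ with $\sigma(\sqrt{-1})=-\sqrt{-1}$ and $\sigma(\sqrt{2})=\sqrt{2}$ sends $P_1$ to $-P_1$ modulo torsion, so the $\sigma$-fixed subspace of $E_2(K)\otimes\mathbb{Q}$ is spanned by $P_2$; hence $\mathop{rank}E_2(\mathbb{Q}(t))=1$ with generator $P_2$, in agreement with Theorem \ref{theorem:rank_K3}, and the $\mathbb{Q}(t)$-rational torsion is $\langle T_1,(0,0)\rangle\cong(\mathbb{Z}/2\mathbb{Z})^2$. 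The main obstacle is, as for $E_3$, the $2$-descent step: certifying that the explicit points generate the whole Mordell-Weil lattice (i.e.\ $n=1$) rests on the square-class computations in $K$, which are the only genuinely delicate part; the remaining steps transfer essentially verbatim from the $E_3$ computations under $u\mapsto t$.
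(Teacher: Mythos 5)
Your proposal follows essentially the same route as the paper: rank $2$ from Lemma \ref{lemma:rank_e2}, torsion via Lemma \ref{lemma:torsion_types} plus a duplication-formula check, a height-pairing discriminant bound forcing the index $n$ of $\langle P_1,P_2\rangle$ in the geometric Mordell--Weil lattice to divide $2$, a $2$-descent computation in $K^{*}/(K^{*})^{2}$ to force $n=1$, and the Galois action $\sqrt{-1}\mapsto-\sqrt{-1}$, $\sqrt{2}\mapsto\sqrt{2}$ to cut the rank down to $1$ over $\mathbb{Q}(t)$ with generator $P_2$. The only (immaterial) difference is in the index-one step: the paper first uses the parity of $4\langle R,R\rangle$ to reduce to excluding $2R=P_2+T$ and then checks $\psi(P_2+T)\neq(1,1)$ for the four relevant torsion classes, whereas you propose showing directly that the image of $\langle P_1,P_2,T_1,T_2\rangle$ under the descent map has full order $16$ (for which you should include $\eta(T_1)$ alongside the three images you list) --- two equivalent bookkeepings of the same computation.
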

\begin{proof}
The torsion subgroup is computed similarly as in the proof of Lemma \ref{lemma:torsion}. We put $K=\overline{\mathbb{Q}}(t)$. Let $(E_{2}(K)/E_{2}(K)_{\mathop{tors}},\langle\cdot,\cdot\rangle_{E_{2}})$ be the Mordell-Weil lattice with the height pairing $\langle\cdot,\cdot\rangle_{E_{2}}$. We compute easily $\langle P_{1},P_{1}\rangle_{E_{2}}=\frac{1}{2}$, $\langle P_{2},P_{2}\rangle_{E_{2}} = 1$ and $\langle P_{1},P_{2}\rangle_{E_{2}} = 0$. In general, for each $P,Q\in E_{2}(K)/E_{2}(K)_{\mathop{tors}}$ the value of the pairing $\langle P,Q\rangle_{E_{2}}$ lies in $\frac{1}{4}\mathbb{Z}$ which follows by the type of singular fibers (cf. Table \ref{table:singular_E2}). 

Consider the lattice $\Lambda=(E_{2}(K)/E_{2}(K)_{\mathop{tors}}$ with the pairing $\langle\cdot,\cdot,\rangle=4\langle\cdot,\cdot\rangle_{E_{2}}$. Let $\Lambda '$ be generated by $P_{1}$ and $P_{2}$. It is a sublattice of $\Lambda$ of a finite index which we call $n=[\Lambda:\Lambda ']$. For the lattice $\Lambda$ we have $\langle P_{1},P_{1}\rangle=2$, $\langle P_{2},P_{2}\rangle = 4$ and $\langle P_{1},P_{2}\rangle = 0$. Hence, the following equality holds for discriminants of lattices $\Lambda$ and $\Lambda '$ with respect to the pairing $\langle\cdot,\cdot\rangle$
\[8=\Delta(\Lambda ') = n^2\Delta(\Lambda).\]
Hence, $n$ divides $2$. We want to show that $n=1$. Suppose to the contrary that $n=2$. There exists a point $R\in E_{2}(K)$ of infinite order, such that 
\[2R=aP_{1}+bP_{2}+T\]
for some $a,b\in\{0,1\}$ and $T\in E_{2}(K)_{\textrm{tors}}$. So
\[4\langle R,R\rangle=\langle 2R,2R\rangle=2a^2+4b^2=2(a^2+2b^2).\]
This implies $2\mid (a^2+2b^2)$. For $a,b\in\{0,1\}$ there are pairs $(a,b)=(0,0)$ and $(a,b)=(0,1)$. For $(a,b)=(0,0)$ we obtain the equation 
\[2R=T\]
for a $K$-rational torsion point $T$. This implies that $R$ is of finite order, hence a contradiction. For the pair $(a,b)=(0,1)$ we obtain the equation 
\[2R=P_{2}+T\]
with a $K$-rational torsion point $T$. We consider only the cases $T=O$, $T=T_{1}$, $T=T_{2}$ and $T=T_{1}+T_{2}$, since one can add a point from $2 E_{2}(K)_{\textrm{tors}}$ to both sides. 

Consider the $2$-descent homomorphism
\[\psi:E_{2}(K)/2E_{2}(K)\hookrightarrow K^{*}/(K^{*})^{2}\times K^{*}/(K^{*})^{2}.\]
The homomorphism is defined for non-torsion points $(x,y)$ in $E_{2}(K)$ by the formula $\psi(x,y)=(x-e_{1},x-e_{2})$ where $e_{1}=0$ and $e_{2}=4t^2$. We check using MAGMA that $\psi(P_{2}+T)\neq (1,1)$ for $T\in\{O,T_{1},T_{2},T_{1}+T_{2}\}$. This proves that the assumption $n=2$ leads to a contradiction. Hence $\Lambda=\Lambda '$, proving that the rank of $E_{2}(K)$ is two.

Now we prove that the group $E_{2}(\mathbb{Q}(t))$ is generated by $P_{2}$, $T_{1}$ and $2T_{2}$. For the torsion part, observe that $E_{2}(\mathbb{Q}(t))_{\textrm{tors}}\subset E_{2}(\overline{\mathbb{Q}}(t))_{\textrm{tors}}$. The group $E_{2}(\overline{\mathbb{Q}}(t))_{\textrm{tors}}$ is generated by $T_{1}$ and $T_{2}$. Since $T_{2}$ is not $\mathbb{Q}(t)$-rational, the group $E_{2}(\mathbb{Q}(t))_{\textrm{tors}}$ is generated by $T_{1}$ and $2T_{2}=(0,0)$. We know that the rank of $E_{2}(\overline{\mathbb{Q}}(t))$ is $2$. Hence, the rank of $E_{2}(\mathbb{Q}(t))$ is at most $2$. Assume it equals 2.

Then there exists a point $R$ defined over $\mathbb{Q}(t)$ such that $R=aP_{1}+bP_{2}+T$ for some integers $a\neq 0$ and $b$ and a torsion point $T$. Since $4T=O$, we have
\begin{equation}\label{equation:combination}
4R=4aP_{1}+4bP_{2}.
\end{equation}
Recall that
\[P_{1}=(2 (1 + \sqrt{2}) (-1 + t)^2 t,2\sqrt{-1} (1 + \sqrt{2}) (-1 + (\sqrt{2} - t)^2) (-1 + t)^2 t).\]
We choose an automorphism $\sigma\in\mathop{Gal}(\overline{\mathbb{Q}}/\mathbb{Q})$ which acts on the coefficients of rational functions in the coordinates of $P_{1}$ by the formula $\sigma(\sqrt{-1})=-\sqrt{-1},\, \sigma(\sqrt{2})=\sqrt{2}$. The action of $\sigma$ commutes with the addition morphism on the curve $E_{2}$ which is defined over $\mathbb{Q}(t)$.
Applying $\sigma$ to both sides of (\ref{equation:combination}) we get $8aP_{1}=O$, because $\sigma(P_{1})=-P_{1}$ and $\sigma(P_{2})=P_{2}$. This gives a contradiction since $P_{1}$ is a non-torsion point. 

If the Mordell-Weil lattice $E_{2}(\mathbb{Q}(t))/E_{2}(\mathbb{Q}(t))_{\mathop{tors}}$ were not generated by $P_{2}$, then the lattice generated by this point would be of finite index greater than $1$ in the full Mordell-Weil lattice. Then the lattice generated by $P_{1}$ and $P_{2}$ would be of index greater than $1$ in the full Mordell-Weil lattice $E_{2}(\overline{\mathbb{Q}}(t))/E_{2}(\overline{\mathbb{Q}}(t))_{\mathop{tors}}$, which contradicts what has been proven already.
\end{proof}

\begin{proof}[Proof of Theorem \ref{theorem:rank_K3}]
This follows from Lemma \ref{lemma:rank_K3} and the fact that the curves
\[y^2=x(x-1)(x-\left(\frac{2t}{t^2-1}\right)^2),\]
\[y^2=x(x-(t^2-1)^2)(x-4t^2)\]
are isomorphic over $\mathbb{Q}(t)$. 
\end{proof}
\begin{remark}
It is natural to ask what is the rank of the Mordell-Weil group of a curve
\[y^2=x(x-\alpha a^2)(x-\beta b^2),\]
where $\alpha a^2+\beta b^2+\gamma c^2=0$ for some $\alpha,\beta,\gamma \in\mathbb{Z}$. 
In particular, one would like to know what is the upper bound of the rank in such a big family. We hope to return to this question in the future.
\end{remark}

\section*{Acknowledgments}
The author would like to thank Wojciech Gajda for suggesting this research problem and for many helpful remarks. He thanks Adrian Langer for many helpful comments. The author gratefully acknowledges the many helpful suggestions of Remke Kloosterman, especially drawing the author's attention to the method of twisting and the application of the Artin-Tate conjectures. Finally, the author wishes to express his thanks to an anonymous referee for several remarks that improved the exposition and removed several inaccuracies. The author was supported by the National Science Centre research grant 2012/05/N/ST1/02871.

\bibliography{bibliography}
\bibliographystyle{amsplain}

\end{document}